\newtheorem{thm}{Theorem}[section]
\newtheorem{cor}{Corollary}[section]
\newtheorem{lemma}{Lemma}[section]
\newtheorem{assumption}{Assumption}[section]
\newtheorem{remark}{Remark}[section]
\newtheorem{defn}{Definition}[section]
\newtheorem{note}{Notations}[section]
\newcounter{nextauthor}
\def\mathrm{\mbox}
\begin{document}
\title{{\bf Stability for a stochastic fractional differential variational inequality with L\'{e}vy jump}\thanks{This work was supported by the National Natural Science Foundation of China (12171339, 12471296).}}
\author{Yue Zeng$^a$, Yao-jia Zhang$^{b,c}$ and Nan-jing Huang$^a$ \thanks{Corresponding author. E-mail addresses: njhuang@scu.edu.cn; nanjinghuang@hotmail.com} \\
{\small\it  a. Department of Mathematics, Sichuan University, Chengdu, Sichuan 610064, P.R. China}\\
{\small\it b. School of Science, Southwest Petroleum University, Chengdu, 610500, P.R. China}\\
{\small\it c. Institute for Artificial Intelligence, Southwest Petroleum University, Chengdu, 610500, P.R. China}}
\date{}
\maketitle \vspace*{-9mm}
\begin{abstract}
\noindent
The main goal of this paper is to investigate the multi-parameter stability result for a stochastic fractional differential variational inequality with L\'{e}vy jump (SFDVI with L\'{e}vy jump) under some mild conditions. We  verify that Mosco convergence of the perturbed set implies point convergence of the projection onto the Hilbert space consisting of special stochastic processes whose range is the perturbed set. Moreover, by using the projection method and some inequality techniques, we establish a strong convergence result for the solution of SFDVI with L\'{e}vy jump when the mappings and constraint set are both perturbed. Finally, we apply the stability results to the spatial price equilibrium problem and the multi-agent optimization problem in stochastic environments.
\\ \ \\
\noindent {\bf Keywords}: Stochastic fractional differential variational inequality; L\'{e}vy jump; stochastic fractional differential equation; stability; Mosco convergence.
\\ \ \\
\noindent \textbf{AMS Subject Classification:} 60H20, 34A08, 49J40.
\end{abstract}
\section{Introduction}
 In 2008, Pang and Stewart \cite{Pang2008} conducted a comprehensive study of differential variational inequalities (DVIs) in finite dimensional spaces. To deal with uncertainties in dynamical systems, Zhang et al. \cite{zhang2023} has recently extended this framework by introducing the following new class of stochastic differential inequalities (SDVI) consisting of stochastic differential equations and stochastic variational inequalities:
\begin{equation}\label{S1}
\left\{
\begin{aligned}
&dx(t)=f(t,x(t),u(t))dt+g(t,x(t),u(t))dB_t, \, t\in [0,T], \, x(0)=x_0,\\
&\left\langle F(t,\omega,x(t,\omega),u(t,\omega)),v-u(t,\omega) \right\rangle \geq 0, \; \forall v\in K, \; a.e. \, t\in [0,T], \; a.s. \, \omega\in \Omega,
\end{aligned}
\right.
\end{equation}
where $B_t$ is an $l$-dimensional standard Brownian motion, $x_0$ is a given random variable, $K$ is a closed convex subset in $\mathbb{R}^q$, $f,g,F$ are proper measurable functions. They established the existence and uniqueness of the solution for \eqref{S1}, as well as the parameter dependency of the solution, and they have also provided a series of application examples. On the basis of this work, they also developed a penalty method for solving SDVI \cite{zhang2023r} and formulated its Euler scheme \cite{zhang2023s}.  It is well known that complementarity problems (CPs) represent a significant class of nonlinear optimization problems with broad applications in economics, engineering, and other fields of applied mathematics \cite{Facchinei2003}, and varational inequalities is deeply related to complementarity theory. Similar to the relationship between classical variational inequality and classical nonlinear complementarity problem \cite{Facchinei2003}, if $K$ is a closed convex cone, then \eqref{S1} is equivalent to the following stochastic differential complementarity problem:
\begin{equation}\label{CP1}
\left\{
\begin{aligned}
&dx(t)=f(t,x(t),u(t))dt+g(t,x(t),u(t))dB_t, \, t\in [0,T], \, x(0)=x_0,\\
&K\ni u(t,\omega)\; \bot\;  F(t,\omega,x(t,\omega),u(t,\omega))\in K^*, \; a.e. \, t\in [0,T], \; a.s. \, \omega\in\Omega,
\end{aligned}
\right.
\end{equation}
where the notation $\bot$ means "perpendicular" and  $K^*=\{d\in\mathbb{R}^q:\langle v,d\rangle\geq 0,\;\forall v\in K\}$ is the dual cone of the closed convex cone $K$.

It is worth mentioning that numerous systems, used to model the problems arising in the real world, exhibit properties of memory and jumps. To accurately describe such phenomena, some scholars have incorporated fractional calculus and L\'{e}vy jumps into their models to capture the systems' memory and jump properties, rather than relying solely on Brownian motion to characterize the system's behavior (see, for example, \cite{Abouagwa2019,Kumar2017,Pedjeu2012,Xu2015}). Considering the impact of memory and jumps on practical systems, Zeng et al. \cite{zeng2024} investigated  the following stochastic fractional differential variational inequality with L\'{e}vy jump (SFDVI with L\'{e}vy jump):
\begin{align}\label{S2}
\begin{cases}
 dx(t)=b(t,x(t-),u(t-))dt+\sigma_1(t,x(t-),u(t-))(dt)^\alpha+\sigma(t,x(t-),u(t-))dB(t)\\
 \qquad\qquad\mbox{} +\int_{\| y\|<c}G(t,x(t-),u(t-),y)\tilde{N}(dt,dy), \; \alpha \in (\frac{1}{2},1),\\
 x(0)=p_0,\\
 \langle F(t,\omega,x(t,\omega),u(t,\omega)),v-u(t,\omega)\rangle\geq0, \; \forall v\in K, \; a.e. \, t\in [0,T], \; a.s. \, \omega\in\Omega,
 \end{cases}
\end{align}
which is composed of a stochastic fractional differential equation with L\'{e}vy jump and a stochastic variational inequality. For the fractional differential part, they considered a special class of easy to calculate fractional differentiation defined by $(dt)^{\alpha}$ with $\alpha\in(\frac{1}{2},1)$ (see \cite{Pedjeu2012,zeng2024}). Under some mild conditions, they showed that there exists a unique solution to system \eqref{S2}. For more symbols and details, please refer to \cite{zeng2024}. Clearly, when $K$ is a closed convex cone, SFDVI with L\'{e}vy jump is equivalent to the following stochastic fractional differential complementarity problem with L\'{e}vy jump:
\begin{align}\label{CP2}
  \begin{cases}
 dx(t)=b(t,x(t-),u(t-))dt+\sigma_1(t,x(t-),u(t-))(dt)^\alpha+\sigma(t,x(t-),u(t-))dB(t)\\
 \qquad\qquad\mbox{} +\int_{\| y\|<c}G(t,x(t-),u(t-),y)\tilde{N}(dt,dy), \; \alpha \in (\frac{1}{2},1),\\
 x(0)=p_0,\\
  K\ni u(t,\omega)\; \bot\;  F(t,\omega,x(t,\omega),u(t,\omega))\in K^*, \; a.e. \, t\in [0,T], \; a.s. \, \omega\in\Omega.
 \end{cases}
  \end{align}

During the past decades, many scholars have produced very excellent results on the properties and numerical analysis related to the solutions of DVI under different conditions (see, for example, \cite{Chen2014,Li2010,Li2015,Liu2017,Migorski2022,Zeng2018}). As a class of stochastic versions of DVIs, SDVI \eqref{S1} and SFDVI with L\'{e}vy jump \eqref{S2} provide effective mathematical models for solving many practical problems, such as circuit problems with diodes and bridge collapse problems in stochastic environments \cite{zhang2023s,zhang2023,zhang2023r,zeng2024}. Therefore, as a further development of DVI, it would be interesting and important to study the properties associated with the solutions of SDVI \eqref{S1} and SFDVI with L\'{e}vy jump \eqref{S2}.

On the other hand, in the modern science and engineering fields, once the existence of a solution for a given system is established, the study of the system's stability becomes crucial. This is because it is related to the behavior of the system in the face of external perturbations. For unconstrained dynamical systems and their stochastic counterparts such as ordinary differential equation and stochastic differential equations, stability typically refers to Lyapunov stability, which aims to study the system's long-term behavior under external perturbations, for instance we refer to \cite{Khasminskii2012,Yin2011,Min2023,Shen2022} and the references therein. Indeed, DVI can be regarded as a constrained dynamical system, while \eqref{S1} and \eqref{S2} can be seen as the constrained stochastic dynamical systems. However, different from the Lyapunov sense, the study of the stability of solutions to variational inequalities typically refers to the change in the set of solutions to variational inequalities when the constraint set or mapping is perturbed. In the context of DVI, research on the stability of the solution set is generally divided into two approaches. One is to study the continuity or semi-continuity about the solution set of DVI under parameter perturbation. For example, Pang et al. \cite{Pang2009} in 2009 studied the dependence of the solution to DVI on initial values, Wang et al. \cite{Wang2013} in 2013 established some semi-continuous results for a class of differential vector variational inequalities and Guo et al. \cite{Guo2020} in 2020 discussed the upper semi-continuity and continuity of the set of mild solutions to partial differential variational inequalities in Banach spaces with respect to two parameters. The other approach is to study the convergence of the solution set of DVI under parametric perturbation. For example, by using the concept of
the Mosco convergence for set sequences, Gwinner \cite{Gwinner2013} in 2013 obtained a novel upper set convergence result to the solutions of a new class of DVIs with respect to perturbations in the data, Sofonea \cite{Sofonea2018} in 2018 studied a convergence result for a class of elliptic hemivariational inequalities which describes the dependence of the solution with respect to the data, and  Xiao et al. \cite{Xiao2022} in 2022 obtained the strong convergence to the unique solution to the evolutionary variational-hemivariational inequality under different mild conditions. For more stability results for DVIs, the readers are encouraged to consult \cite{Wu2024,Li2019,Jiang2021,Wang2014,Tang2020} and the references therein. As extensions of DVI within a stochastic framework, systems \eqref{S1} and \eqref{S2}  exhibit more complex behavior due to the introduction of stochastic elements. However, to the best of our knowledge, stability analysis of \eqref{S1} and \eqref{S2} have not been studied in the literature, apart from  \cite{zhang2023} wherein, as mentioned above, a special consideration of the continuity dependence of solutions of SDVI with respect to the parameters in the mappings. Therefore, as mentioned above, as two important and meaningful extensions of DVI in the direction of stochastic analysis, it is necessary and attractive to study the stability of \eqref{S1} and \eqref{S2} under some mild conditions.

The present paper is thus devoted to the multi-parameter stability for the solution of SFDVI with L\'{e}vy jump. More precisely, we would like to consider the stability for the following multi-parameter system $({\bf MPS}(\lambda,\mu))$ associated with the system \eqref{S2}:
\begin{align}\label{S5}
\begin{cases}
 dx_{\lambda,\mu}(t)=b_{\lambda}(t,x_{\lambda,\mu}(t-),u_{\lambda,\mu}(t-))dt
 +\sigma_{1\lambda}(t,x_{\lambda,\mu}(t-),u_{\lambda,\mu}(t-))(dt)^\alpha\\
 \qquad\qquad\;\mbox{}+\sigma_{\lambda}(t,x(t-),u(t-))dB(t)+\int_{\| y\|<c}G_{\lambda}(t,x_{\lambda,\mu}(t-),u_{\lambda,\mu}(t-),y)\tilde{N}(dt,dy), \; \alpha \in (\frac{1}{2},1),\\
 x_{\lambda,\mu}(0)=p_0,\\
 \langle F_{\lambda}(t,\omega,x_{\lambda,\mu}(t,\omega),u_{\lambda,\mu}(t,\omega)),v-u_{\lambda,\mu}(t,\omega)\rangle\geq0, \; \forall v\in K_{\mu}, \; a.e. \, t\in [0,T], \; a.s. \, \omega\in\Omega,
 \end{cases}
\end{align}
where $\lambda,\mu$ are two parameters in a complete metric space $(M,d)$,  and the mappings are perturbed by the parameter $\lambda$, the constraint set $K_{\mu}$ is a closed convex subset in $\mathbb{R}^q$, and $K_{\mu}$ is perturbed by the parameter $\mu$, with the solution denoted as the stochastic process pair $(x_{\lambda,\mu}(t,\omega),u_{\lambda,\mu}(t,\omega))$. It should be noted that a stochastic process such as $M(t,\omega)$ is a measurable function defined on the product space $[0,\infty]\times\Omega$, and $M(t,\cdot)$ is a random variable for each $t$, $M(\cdot,\omega)$ is a measurable function for each $\omega$ (called sample path). Without causing confusion, we will use $x_{\lambda,\mu}(t)$ and $u_{\lambda,\mu}(t)$ instead of $x_{\lambda,\mu}(t,\omega)$ and $u_{\lambda,\mu}(t,\omega)$ in the following statements. We assume that there exist two sequences $\{\lambda_m\}_{m\in\mathbb{Z}_+}$, $\{\mu_n\}_{n\in\mathbb{Z}_+}$ which belong to the metric space $(M,d)$ such that $\lim_{m\rightarrow\infty}d(\lambda_m,\lambda)\rightarrow0$, $\lim_{n\rightarrow\infty}d(\mu_n,\mu)\rightarrow0$ and the multi-parameter perturbed systems are denoted by ${\bf MPS}(\lambda_m,\mu_n)$. More symbolic explanations and details will be explained later.

Unlike previous studies, the solutions to SFDVI with L\'{e}vy jump are essentially stochastic processes. Such stochastic processes tend to discuss the relevant properties of the solutions in a complete probability space with filtration, implying that we must consider solutions that hold in the sense of $``\forall v\in K,\;a.e. \, t\in [0,T], \; a.s. \, \omega\in\Omega"$ rather than $``\forall v\in K"$ as in some classical variational inequalities and our solution is adapted to the filtration. To address this, we lift the SFDVI with L\'{e}vy jump from a finite-dimensional space
$K$ to an equivalent problem in a Hilbert space. Such an approach effectively solves the problem of the existence uniqueness of the solution of SFDVI with L\'{e}vy jump \cite{zeng2024}, but poses a new challenge to the study of stability with parametric perturbations, i.e., assessing the impact of perturbations to the constraint set
$K$ in a finite-dimensional space on the equivalence problem in an infinite-dimensional Hilbert space. To the best of our knowledge, such issues have been only partially explored in Gwinner's static random variational inequalities in \cite{Gwinner2000} and dynamic non-random differential variational inequalities in \cite{Gwinner2013}. However, in a dynamic stochastic environment, the measurability and integrability that arise in such problems are more complicated. In present paper, we overcome this difficulty and further extend such interesting results to general stochastic differential variational inequalities with dynamics. On the other hand, the interaction of $x_{\lambda,\mu}(t)$ and $u_{\lambda,\mu}(t)$ in the face of a perturbation of the two parameters also poses a challenge to our stability results. And clearly, the methods used in those aforementioned papers do not apply here in a straightforward manner. Indeed,  we need to carefully find reasonable assumptions for the perturbed set $K_{\mu}$ while assuming that the mappings have continuity with respect to the parameter $\lambda$, which ensures the stability of the solution of ${\bf MPS}(\lambda,\mu)$. Our main contributions are as follows:
\begin{description}
  \item[(i)]  extending results in \cite{Gwinner2000,Gwinner2013} to general stochastic differential variational inequalities, and showing the pointwise convergence of the projection onto the Hilbert space containing some special stochastic processes whose range is the perturbed set $K_{\mu}$ by using the Mosco convergence of the perturbed set $K_{\mu}$ with respect to the parameter $\mu$ (more details will be given in Section 3).
  \item[(ii)] proposing a new multi-parameter stability result for system \eqref{S5} when the mappings and constraint set are both perturbed by employing the projection method and some inequality techniques, more precisely, we establish a strong convergence result for the solution of SFDVI with L\'{e}vy jump.
  \item[(iii)] applying the theoretical results to the spatial price equilibrium problem in stochastic environments and showing the stability result for such a problem under some mild conditions.
  \item[(iv)] investigating a new class of multi-agent stochastic fractional differential games with L\'{e}vy jump, whose Nash equilibrium can be characterized by the SFDVI with L\'{e}vy jump, and then providing the existence, uniqueness, and stability of the Nash equilibrium for such a problem by employing our theoretical results.
\end{description}

The rest of this paper is organised as follows. The next section recalls some relevant symbols, definitions and known results. After that in Section 3, we introduce Mosco convergence and give the relationship between Mosco convergence and projection. In Section 4, we propose and prove a stability result of the solution of \eqref{S5} under some mild  conditions. In Section 5, we give two applications to the stochastic spatial price equilibrium problem and the stochastic multi-agent optimization problem, before we summarize the results in Section 6.

\section{Preliminaries}
\setcounter{equation}{0}

In this section we give the definitions of SFDVI with L\'{e}vy jump, after recalling some basics of stochastic analysis and the definition of fractional calculus.

\subsection{Basics of  stochastic analysis and fractional calculus}
Let $(\Omega, \mathcal{F}, \{\mathcal{F}_t\}_{t\geq0}, \mathbb{P})$ is a complete probability space with filtration $\{\mathcal{F}_t\}_{t\geq0}$. The filtration $\{\mathcal{F}_t\}_{t\geq0}$ is an increasing family $\{\mathcal{F}_t:t\geq 0\}$ of $\sigma$-fields. A stochastic process $M(t)$ is said to be adapted to $\{\mathcal{F}_t:t\geq 0\}$ is for each $t$, the random variable $M(t,\cdot)$ is $\mathcal{F}_t$-measurable. Moreover, an adapted stochastic process $M(t)$ is called a martingale with respect to $\{\mathcal{F}_t:t\geq 0\}$ if for any $s<t$,
$$
\mathbb{E}(M(t)|\mathcal{F}_s)=M(s)
.$$

Now, we fill in the details in \eqref{S2} and \eqref{S5}.
\begin{note}\label{funs}
For any $\lambda,\mu\in(M,d)$, assume that the measurable and adapted functions appeared in \eqref{S5} satisfy the following conditions.
 \begin{itemize}
  \item $\|\cdot\|$ and $\langle\cdot,\cdot\rangle$ are the norm and the inner product in $\mathbb{R}^p$ (or $\mathbb{R}^q$), respectively.
 \item $B(t)$ is an $l$-dimensional $\mathcal{F}_t$-adapted Brownian motion.
\item $N:\mathbb{R}^+\times (\mathbb{R}^p\backslash\{0\})$ is independent of $B(t)$ and is an $\mathcal{F}_t$-adapted Poisson measure, and the associated compensated martingale measure is defined by $$\tilde{N}(dt,dy):=N(dt,dy)-v(dy)dt,$$
       where $v(\cdot)$ is the intensity measure satisfying $$\int_{\mathbb{R}^p\backslash\{0\}}\frac{y^2}{1+y^2}v(dy)<\infty.$$

   \item $\int_0^s\int_{\| y\|<c}G_{\lambda}(t,x_{\lambda,\mu}(t-),u_{\lambda,\mu}(t-),y)\tilde{N}(dt,dy)$ is an $\mathbb{R}^p$-valued square integrable martingale satisfying
       $$P\left(\int_0^s\int_{\| y\|<c}\|G_{\lambda}(t,x_{\lambda,\mu}(t-),u_{\lambda,\mu}(t-),y)\|^2 v(dy)dt<\infty\right)=1$$
       and the maximum allowable jump size is defined as constant $c\geq0$.
   \item $p_0$ is the initial value satisfying $\mathbb{E}\| p_0\|^2<\infty$.
   \item $b_{\lambda}:[0,T]\times\mathbb{R}^p\times\mathbb{R}^q\rightarrow \mathbb{R}^p$.
   \item $ \sigma_{\lambda}:[0,T]\times\mathbb{R}^p\times\mathbb{R}^q\rightarrow \mathbb{R}^{p\times l}$.
   \item $G_{\lambda}:[0,T]\times\mathbb{R}^p\times\mathbb{R}^q\times \mathbb{R}^p\rightarrow \mathbb{R}^p$.
   \item $F_{\lambda}:[0,T]\times\Omega\times\mathbb{R}^p\times K\rightarrow\mathbb{R}^q$.
   \item $\sigma_{1\lambda}:[0,T]\times\mathbb{R}^p\times\mathbb{R}^q\rightarrow \mathbb{R}^p$ is a continuous function with respect to $t$.
 \end{itemize}
\end{note}

The following notations are also used in this paper
\begin{note}\label{notation}
\noindent
\begin{itemize}
  \item $\mathcal{L}^2(\Omega,\mathbb{R}^p)$ is a Hilbert space which contains all $\mathbb{R}^p$-valued square integrable random variables and is equipped with a norm defined by $\|\cdot\|_{\mathcal{L}^2}=(\mathbb{E}\|\cdot\|^2)^{\frac{1}{2}}$.

  \item $\mathbb{Z}_+=\{1,2,3,...\}.$
  \item The projection of $v$ onto $S$,  denoted by $P_S(v)$.
 \item $H[a,b]= \mathcal{L}^2_{ad}([a,b]\times\Omega,\mathbb{R}^q)$ is a Hilbert space which contains all $\mathbb{R}^q$-valued $\mathcal{F}_t$-adapted stochastic processes satisfying $\int_a^b\mathbb{E}\| f(t,\omega)\|^2dt<\infty$ for all $f\in H[a,b]$ and is equipped with an inner product defined by
      $$
      \langle u,v\rangle_{H[a,b]}=\int_a^b\mathbb{E}(\langle u(t,\omega),v(t,\omega)\rangle)dt,\; \forall u,v\in H[a,b],\;[a,b]\subset[0,T].
      $$
  \item  For any given $\mu\in(M,d)$, let $$U_{\mu}[a,b]=\left\{u(t,\omega)\in\mathcal{L}^2_{ad}([a,b]\times\Omega,\mathbb{R}^q): u(t,\omega)\in K_{\mu}, \;a.e.\,t\in[a,b],\; a.s.\,\omega\in\Omega\right\},$$
       where  $K_{\mu}$ is a closed convex subset in $\mathbb{R}^q$ with $0\in K_{\mu}$.
  \item For any given sequence $\{\mu_n\}\subset (M,d)$  with $d(\mu_n,\mu)\to 0$, let
  $$U_{\mu_n}[a,b]=\left\{u(t,\omega)\in\mathcal{L}^2_{ad}([a,b]\times\Omega,\mathbb{R}^q): u(t,\omega)\in K_{\mu_n}, \;a.e.\,t\in[a,b],\; a.s.\,\omega\in\Omega\right\},$$
  where perturbed set $K_{\mu_n}$ is a closed convex subset in $\mathbb{R}^q$.
\end{itemize}
\end{note}

Next we recall some basic definitions and results of stochastic analysis and fractional calculus.
\begin{defn}\label{RLFI}\cite{Pedjeu2012} For any given $g\in \mathcal{L}^1([a, b]; \mathbb{R}^d)$,  the left Riemann-Liouville fractional integrals of order $\alpha$ is defined by
 $$(I_{a+}^{\alpha}g)(t)=\frac{1}{\Gamma(\alpha)}\int_a^t(t-s)^{\alpha-1}g(s)ds, \; t>a,$$
 where $\Gamma(\alpha)=\int_0^\infty s^{\alpha-1}e^{-s}ds$.
\end{defn}
\begin{defn}\label{RLFD}\cite{Biagini2008} Let $g\in \mathcal{L}^1([a, b]; \mathbb{R}^d)$ and $\alpha\in(0,1)$. If $g$ is absolutely continuous on $[a,b]$, then the left Riemann-Liouville fractional derivatives of order $\alpha$ is defined by
$$(D_{a+}^{\alpha}g)(t)=\frac{1}{\Gamma(1-\alpha)}\frac{d}{dt}\int_a^t(t-s)^{-\alpha}g(s)ds, \;t>a.$$
\end{defn}
\begin{remark}\label{REF}
In this paper, we will only consider the scenario with $a=0$ in Definition \ref{RLFD}, that is,
$$(D_{0+}^{\alpha}g)(t)=\frac{1}{\Gamma(1-\alpha)}\frac{d}{dt}\int_0^t(t-s)^{-\alpha}g(s)ds.$$
According to \cite{Jumarie2004}, one has $D_{0+}^{\alpha}=\frac{d^{\alpha}}{(dt)^{\alpha}}$ and $(d^\alpha
g)(t)=\Gamma(1+\alpha)(dg)(t)=(D_{0+}^\alpha g)(t)(dt)^\alpha$. Let $f(t)=(D_{0+}^\alpha g)(t)$. Then the following formula is valid:
$$\int_0^tf(s)(ds)^\alpha=\Gamma(1+\alpha)g(t)=\Gamma(1+\alpha)D_{0+}^{-\alpha} f(t)=\frac{\Gamma(1+\alpha)}{\Gamma(\alpha)}\int_0^t(t-s)^{\alpha-1}f(s)ds=\alpha\int_0^t(t-s)^{\alpha-1}f(s)ds.$$
For further information, interested readers may refer to \cite{Abouagwa2019,Pedjeu2012,Jumarie2004}.
\end{remark}
\begin{lemma}\label{rdoob}\cite{Yong1999}(Doob's Inequality)
For $p\geq1$, assume that $x(t)$ is a right-continuous martingale such that $\mathbb{E}\|x(t)\|^p<\infty$ for $t\geq0$. Then
$$\mathbb{P}\left(\sup_{t\in[0,T]}\| x(t)\|>\lambda\right)\leq \frac{\mathbb{E}\| x(T)\|^p}{\lambda^p}, \; \forall T>0$$
and for $p>1$,
$$\mathbb{E}\left(\sup_{t\in[0,T]}\|x(t)\|^p\right)\leq \left(\frac{p}{p-1}\right)^p\mathbb{E}\| x(T)\|^p,  \; \forall T>0.$$
\end{lemma}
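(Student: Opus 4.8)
The plan is to recognize this as the classical Doob maximal inequality and recover it by the standard two-step argument: first prove the discrete-time maximal inequality for nonnegative submartingales, and then pass to the continuous-time supremum using right-continuity of the sample paths. The starting point is that for a martingale $x(t)$ and a convex function $\phi:\mathbb{R}^p\to\mathbb{R}_+$, the process $\phi(x(t))$ is a nonnegative submartingale by the conditional Jensen inequality; applying this with $\phi(\cdot)=\|\cdot\|$ and with $\phi(\cdot)=\|\cdot\|^p$ (both convex for $p\ge1$) produces the two submartingales on which both assertions rest. Hence it suffices to establish the finite-horizon bound $\mathbb{P}\big(\max_{k\le n}Y_k\ge\lambda\big)\le\lambda^{-1}\,\mathbb{E}\big[Y_n\mathbf{1}_{\{\max_{k\le n}Y_k\ge\lambda\}}\big]$ for a nonnegative submartingale $(Y_k)_{k=0}^n$, and then to take limits along a refining sequence of partitions.

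For the first assertion I would fix a finite mesh $0=t_0<t_1<\cdots<t_n=T$, set $\tau=\min\{k:\|x(t_k)\|^p\ge\lambda^p\}$ (with $\tau=n$ if the set is empty), and write $A=\{\max_{k\le n}\|x(t_k)\|\ge\lambda\}$. On $A$ one has $\tau\le n$ and $\|x(t_\tau)\|^p\ge\lambda^p$, so decomposing $A=\bigsqcup_{j=0}^{n}\{\tau=j\}$ and using that $\|x(t)\|^p$ is a submartingale gives, on each piece, $\mathbb{E}\big[\|x(t_n)\|^p\mathbf{1}_{\{\tau=j\}}\big]\ge\mathbb{E}\big[\|x(t_j)\|^p\mathbf{1}_{\{\tau=j\}}\big]\ge\lambda^p\,\mathbb{P}(\tau=j)$; summing over $j$ yields $\lambda^p\,\mathbb{P}(A)\le\mathbb{E}\|x(T)\|^p$. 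Since $x$ is right-continuous, $\sup_{t\in[0,T]}\|x(t)\|$ coincides with the supremum over a countable dense subset of $[0,T]$ together with the endpoint $T$, so choosing partitions with mesh tending to $0$ makes the events $A$ increase to $\{\sup_{t\in[0,T]}\|x(t)\|\ge\lambda\}$, and monotone convergence transfers the estimate to $\mathbb{P}\big(\sup_{t\in[0,T]}\|x(t)\|>\lambda\big)$.

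For the $L^p$ assertion, fix $p>1$, put $X^{*}=\sup_{t\in[0,T]}\|x(t)\|$, and record the $L^1$-type refinement $\mathbb{P}(X^{*}>\lambda)\le\lambda^{-1}\,\mathbb{E}\big[\|x(T)\|\,\mathbf{1}_{\{X^{*}>\lambda\}}\big]$, proved exactly as above but for the submartingale $\|x(t)\|$. To avoid dividing by a possibly infinite quantity I would first truncate, replacing $X^{*}$ by $X^{*}\wedge N$; then by the layer-cake formula and Fubini's theorem,
\[
\mathbb{E}\big[(X^{*}\wedge N)^p\big]=\int_0^\infty p\lambda^{p-1}\,\mathbb{P}(X^{*}\wedge N>\lambda)\,d\lambda
\le p\int_0^\infty\lambda^{p-2}\,\mathbb{E}\big[\|x(T)\|\,\mathbf{1}_{\{X^{*}\wedge N>\lambda\}}\big]\,d\lambda
=\tfrac{p}{p-1}\,\mathbb{E}\big[\|x(T)\|\,(X^{*}\wedge N)^{p-1}\big],
\]
and Hölder's inequality with exponents $p$ and $p/(p-1)$ bounds the right-hand side by $\tfrac{p}{p-1}\big(\mathbb{E}\|x(T)\|^p\big)^{1/p}\big(\mathbb{E}[(X^{*}\wedge N)^p]\big)^{(p-1)/p}$. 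Since $\mathbb{E}[(X^{*}\wedge N)^p]\le N^p<\infty$, I can divide through and obtain $\mathbb{E}[(X^{*}\wedge N)^p]\le(\tfrac{p}{p-1})^p\,\mathbb{E}\|x(T)\|^p$; letting $N\to\infty$ by monotone convergence gives the claimed inequality.

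The only real obstacle — minor, since this is a textbook fact — is the passage from finite partitions to the continuous-time supremum: one needs right-continuity both to make $\sup_{t\in[0,T]}\|x(t)\|$ measurable and to identify it with the supremum over a countable dense set, so that the discrete maximal inequalities apply along refining meshes, and one needs the truncation $X^{*}\wedge N$ to legitimize the self-referential division in the $L^p$ step. Both points are standard; for a complete treatment we refer to \cite{Yong1999}.
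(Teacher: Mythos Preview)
Your proof is the standard textbook argument for Doob's maximal inequalities and is correct: the conditional Jensen step producing the submartingales $\|x(t)\|$ and $\|x(t)\|^p$, the stopping-time decomposition for the weak-type bound, the right-continuity passage from partitions to the full supremum, and the layer-cake/H\"older argument with the truncation $X^*\wedge N$ to justify the division are all carried out properly. The paper itself offers no proof of this lemma at all---it is stated as a quotation from \cite{Yong1999} and used as a black box in the estimates of Section~4---so there is nothing in the paper to compare your argument against; what you have written is precisely the classical derivation one finds in the cited reference.
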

\begin{lemma}\label{ito} \cite{Oksendal}(It\^{o}'s Isometry) For any given $T>0$, one has
 $$\mathbb{E}\left[\left(\int^T_0f(t,\omega)dB_t\right)^2\right]=\mathbb{E}\left[\int_0^Tf^2(t,\omega)dt\right],\;\forall f\in\mathcal{V}(0,T),$$
 where $\mathcal{V}(0,T)$ is the set of all functions $f:[0,T]\times \Omega\rightarrow\mathbb{R}$ satisfying the following conditions:
 \begin{itemize}
   \item[(i)] $f$ is $\mathcal{B}\times\mathcal{F}$ measurable, where $\mathcal{B}$ denotes the Borel $\sigma$-algebra on $[0,T]$;
   \item[(ii)]  $f$ is $\mathcal{F}_t$-adapted;
   \item[(iii)] $\mathbb{E}\left[\int_0^Tf^2(t,\omega)dt\right]< \infty$.
 \end{itemize}
\end{lemma}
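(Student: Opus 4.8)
The plan is to establish the identity by the standard two-stage construction of the It\^{o} integral: verify the isometry directly on elementary integrands, then extend it to all of $\mathcal{V}(0,T)$ by density and continuity of the $\mathcal{L}^2$-norm.

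First I would treat the class $\mathcal{E}(0,T)$ of \emph{elementary} functions, i.e.\ those of the form $\phi(t,\omega)=\sum_{j=0}^{N-1}e_j(\omega)\mathbf{1}_{[t_j,t_{j+1})}(t)$ with $0=t_0<t_1<\cdots<t_N=T$ and each $e_j$ being $\mathcal{F}_{t_j}$-measurable with $\mathbb{E}[e_j^2]<\infty$. For such $\phi$ the It\^{o} integral is by definition $\int_0^T\phi\,dB_t=\sum_{j=0}^{N-1}e_j(B_{t_{j+1}}-B_{t_j})$, so
$$\mathbb{E}\Big[\Big(\int_0^T\phi\,dB_t\Big)^2\Big]=\sum_{i,j}\mathbb{E}\big[e_ie_j(B_{t_{i+1}}-B_{t_i})(B_{t_{j+1}}-B_{t_j})\big].$$
For an off-diagonal term with $i<j$, conditioning on $\mathcal{F}_{t_j}$ and using that $B_{t_{j+1}}-B_{t_j}$ is independent of $\mathcal{F}_{t_j}$ with mean zero (while $e_i,e_j$ and $B_{t_{i+1}}-B_{t_i}$ are $\mathcal{F}_{t_j}$-measurable) shows the term vanishes; for a diagonal term the same independence together with $\mathbb{E}[(B_{t_{j+1}}-B_{t_j})^2]=t_{j+1}-t_j$ gives $\mathbb{E}[e_j^2](t_{j+1}-t_j)$. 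Summing, $\mathbb{E}[(\int_0^T\phi\,dB_t)^2]=\sum_{j}\mathbb{E}[e_j^2](t_{j+1}-t_j)=\mathbb{E}[\int_0^T\phi^2\,dt]$, which is the asserted identity on $\mathcal{E}(0,T)$.

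Next I would recall the approximation step underlying the definition of the integral: for every $f\in\mathcal{V}(0,T)$ there is a sequence $\{\phi_n\}\subset\mathcal{E}(0,T)$ with $\mathbb{E}[\int_0^T(f-\phi_n)^2\,dt]\to 0$, obtained in the usual three reductions — first to bounded $f$, then to $f$ whose sample paths are continuous in $t$, then to step functions in $t$. By the isometry just proved, $\mathbb{E}[(\int_0^T(\phi_n-\phi_m)\,dB_t)^2]=\mathbb{E}[\int_0^T(\phi_n-\phi_m)^2\,dt]\to 0$, so $\{\int_0^T\phi_n\,dB_t\}$ is Cauchy in $\mathcal{L}^2(\Omega,\mathbb{R})$ and its limit is, by definition, $\int_0^Tf\,dB_t$. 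Letting $n\to\infty$ in $\mathbb{E}[(\int_0^T\phi_n\,dB_t)^2]=\mathbb{E}[\int_0^T\phi_n^2\,dt]$, the left side converges because the $\mathcal{L}^2(\Omega)$-norm is continuous and the right side converges because $\phi_n\to f$ in $\mathcal{L}^2([0,T]\times\Omega)$ forces $\phi_n^2\to f^2$ in $\mathcal{L}^1([0,T]\times\Omega)$; this yields the identity for $f$.

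The main obstacle is the density/approximation step: for a merely $\mathcal{B}\times\mathcal{F}$-measurable, $\mathcal{F}_t$-adapted $f$ one cannot naively discretize in time, since a crude partition of $[0,T]$ can destroy adaptedness of the coefficients. The delicate point is to carry out each of the three reductions so that the approximants stay adapted (each $e_j$ remains $\mathcal{F}_{t_j}$-measurable) — e.g.\ using left-endpoint values or a causal mollification in $t$ — while still converging in $\mathcal{L}^2([0,T]\times\Omega)$. Once this is in place, the remaining computations are routine applications of conditional expectation and the elementary moments of Brownian increments.
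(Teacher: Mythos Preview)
Your proposal is correct and follows the standard textbook argument (indeed, precisely the construction in \O ksendal's book that the paper cites). Note, however, that the paper itself gives no proof of this lemma at all: it is stated as a known result with a reference to \cite{Oksendal}, so there is nothing in the paper to compare your argument against beyond the citation.
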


\begin{remark}\label{levyito}
In fact, according to \cite{Applebaum2004,Nunno2009}, when $G_{\lambda}(t,x_{\lambda,\mu},u_{\lambda,\mu},x)$ and $N(t,x)$ are defined as described in Notations \ref{funs} and  \ref{notation}, there is a similar It\^{o} isometry for the pure L\'{e}vy jump process $$\int_0^T\int_{\| y\|<c}G_{\lambda}(s,x_{\lambda,\mu}(s-),u_{\lambda,\mu}(s-),y)\tilde{N}(ds,dy)$$ as follows:
\begin{align*}
\quad&\mathbb{E}\left[\left(\int_0^T\int_{\| y\|<c}G_{\lambda}(s,x_{\lambda,\mu}(s-),u_{\lambda,\mu}(s-),y)\tilde{N}(ds,dy)\right)^2\right]\\
=&\mathbb{E}\left[\int_0^T\int_{\| y\|<c}G_{\lambda}^2(s,x_{\lambda,\mu}(s-),u_{\lambda,\mu}(s-),y)v(dy)ds\right]
\end{align*}
For more information on the L\'{e}vy process, the readers may wish to refer to \cite{Applebaum2004,Nunno2009,Abouagwa2019,Xu2015}.
\end{remark}

\subsection{SFDVI with L\'{e}vy jump}
In this subsection, we give the definition of the solutions of SFDVI with L\'{e}vy jump and we also give some lemmas and theorems that lead to our main results.

\begin{defn}\label{SOL} A pair $(x(t),u(t))$ is said to be a solution to SFDVI with L\'{e}vy jump if and only if $x\in \mathcal{L}^2_{ad}([0,T]\times \Omega,\mathbb{R}^p)$ satisfying
\begin{align}\label{3}
 \begin{cases}
 dx(t)=b(t,x(t-),u(t-))dt+\sigma_1(t,x(t-),u(t-))(dt)^\alpha+\sigma(t,x(t-),u(t-))dB(t)\\
 \qquad\qquad+\int_{\| y\|<c}G(t,x(t-),u(t-),y)\tilde{N}(dt,dy), \; \alpha \in (\frac{1}{2},1),\\
 x(0)=x_0,\\
u(t)\in SOL(U_0[0,T],F(t,\omega,x(t,\omega),u(t,\omega))),
 \end{cases}
\end{align}
where $U_0[0,T]=\left\{u(t,\omega)\in\mathcal{L}^2_{ad}([0,T]\times\Omega,\mathbb{R}^q): u(t,\omega)\in K, \;a.e.\,t\in[0,T],\; a.s.\,\omega\in\Omega\right\}$, $K$ is a closed convex subset in $\mathbb{R}^q$, and $SOL(U_0[0,T],F(t,\omega,x(t,\omega),u(t,\omega)))$ is the set of solutions of the stochastic variational inequality:  find $u\in U_0[0,T]$ such that
\begin{equation}\label{2}
  \langle F(t,\omega,x(t,\omega),u(t,\omega)),v-u(t,\omega)\rangle\geq0, \; \forall v\in K, \; a.e. \, t\in [0,T], \; a.s. \, \omega\in\Omega.
\end{equation}
If the solution $(x(t),u(t))$  is unique in the sense of almost everywhere, we say it is the unique solution to system SFDVI with L\'{e}vy jump.
\end{defn}
According to Remark \ref{REF}, the first equation in system \eqref{3} can be rewritten as
\begin{align*}
  x(t)&= x_0+\int_0^t b(s,x(s-),u(s-))ds+\alpha\int_0^t(t-s)^{\alpha-1}\sigma_1(s,x(s-),u(s-))ds+\int_0^t\sigma(s,x(s-),u(s-))dB(s)\\
  &\quad +\int_0^t\int_{\| y\|<c}G(s,x(s-),u(s-),y)\tilde{N}(ds,dy),\; \alpha \in (\frac{1}{2},1).
\end{align*}

\begin{lemma}\label{rconvex}\cite{zhang2023}
 For any $\mu\in (M,d)$, if $K_{\mu}$ be a non-empty, closed and convex subset of $\mathbb{R}^q$. Then for any $[a,b]\subset [0,T]$, $U_{\mu}[a,b]$ is a non-empty, closed and convex subset of $\mathcal{L}^2_{ad}([a,b]\times\Omega,\mathbb{R}^q)$.
\end{lemma}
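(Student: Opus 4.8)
The statement to prove is Lemma 2.3 (labelled \texttt{rconvex}): for a non-empty, closed, convex $K_\mu \subseteq \mathbb{R}^q$, the set $U_\mu[a,b]$ of $\mathcal{F}_t$-adapted, square-integrable processes taking values in $K_\mu$ almost everywhere is non-empty, closed and convex in $\mathcal{L}^2_{ad}([a,b]\times\Omega,\mathbb{R}^q)$.

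\medskip

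\textbf{Proof proposal.} The plan is to verify the three properties in turn, working directly from the definitions. For \emph{non-emptiness}, I would pick any fixed point $k_0 \in K_\mu$ (possible since $K_\mu \neq \emptyset$; the paper in fact assumes $0 \in K_\mu$, which serves equally well) and consider the constant process $u(t,\omega) \equiv k_0$. This is deterministic, hence trivially $\mathcal{F}_t$-adapted and measurable, it takes values in $K_\mu$ everywhere, and $\int_a^b \mathbb{E}\|k_0\|^2\,dt = (b-a)\|k_0\|^2 < \infty$, so $u \in U_\mu[a,b]$. For \emph{convexity}, I would take $u, v \in U_\mu[a,b]$ and $\theta \in [0,1]$, and check that $w := \theta u + (1-\theta) v$ lies in $U_\mu[a,b]$: linearity of the conditional expectation / the fact that a linear combination of adapted processes is adapted gives measurability and adaptedness; for a.e.\ $t$ and a.s.\ $\omega$ we have $u(t,\omega), v(t,\omega) \in K_\mu$, and convexity of $K_\mu$ then gives $w(t,\omega) \in K_\mu$ (the exceptional null sets in $t$ and $\omega$ being the union of those for $u$ and for $v$); finally the triangle inequality and $(p+q)^2 \le 2p^2 + 2q^2$ (or convexity of $\|\cdot\|^2$) show $\int_a^b \mathbb{E}\|w\|^2\,dt < \infty$.

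\medskip

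The substantive part is \emph{closedness}. Here I would take a sequence $\{u_k\} \subset U_\mu[a,b]$ with $u_k \to u$ in the norm of $\mathcal{L}^2_{ad}([a,b]\times\Omega,\mathbb{R}^q)$, i.e.\ $\int_a^b \mathbb{E}\|u_k(t,\omega) - u(t,\omega)\|^2\,dt \to 0$, and show $u \in U_\mu[a,b]$. That $u$ is itself adapted and square-integrable is immediate, since $\mathcal{L}^2_{ad}([a,b]\times\Omega,\mathbb{R}^q)$ is a Hilbert space (hence complete and closed in itself) — adaptedness passes to $L^2$-limits because the closed subspace of adapted processes is itself complete. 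The real point is to show $u(t,\omega) \in K_\mu$ for a.e.\ $t$, a.s.\ $\omega$. The standard route: $L^2$-convergence on the finite-measure space $[a,b]\times\Omega$ (product of Lebesgue measure and $\mathbb{P}$) yields a subsequence $\{u_{k_j}\}$ converging to $u$ $(\mathrm{Leb}\times\mathbb{P})$-almost everywhere. For each such $(t,\omega)$ we have $u_{k_j}(t,\omega) \in K_\mu$ for all $j$, and since $K_\mu$ is \emph{closed} in $\mathbb{R}^q$, the limit $u(t,\omega)$ lies in $K_\mu$. By Fubini, the set of $(t,\omega)$ where this fails has product measure zero, hence $u(t,\omega) \in K_\mu$ for a.e.\ $t \in [a,b]$ and a.s.\ $\omega \in \Omega$, so $u \in U_\mu[a,b]$.

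\medskip

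\textbf{Main obstacle.} There is no deep obstacle — this is a routine-but-careful lemma. The one place demanding attention is the closedness argument: one cannot conclude pointwise membership in $K_\mu$ directly from norm convergence, and must pass through the "$L^2 \Rightarrow$ a.e.\ along a subsequence" theorem on the product measure space, then invoke Fubini to recover the "a.e.\ $t$, a.s.\ $\omega$" formulation. One should also note that modifying $u$ on the (product-)null set where it might escape $K_\mu$ does not change its equivalence class in $\mathcal{L}^2_{ad}$, so the membership statement is consistent with working modulo null sets. Since the statement is cited from \cite{zhang2023}, I would present the argument concisely, emphasising the subsequence/Fubini step and treating non-emptiness and convexity briefly.
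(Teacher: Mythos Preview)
Your argument is correct and is exactly the standard route for this kind of statement. Note, however, that the paper itself does not supply a proof: Lemma~\ref{rconvex} is stated with a citation to \cite{zhang2023} and no argument is given in the present paper, so there is nothing to compare against beyond observing that your subsequence/Fubini approach for closedness, together with the trivial checks for non-emptiness and convexity, is precisely what one would expect the cited reference to contain.
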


\begin{lemma}\label{requi}\cite{zhang2023}
For any fixed $\lambda,\mu\in(M,d)$ and given $x_{\lambda,\mu}\in \mathcal{L}^2_{ad}([a,b]\times \Omega,\mathbb{R}^p)$, if $u_{\lambda,\mu}\in U_{\mu}[a,b]$, then the following
$$
\langle F_{\lambda}(t,\omega,x_{\lambda,\mu}(t,\omega),u_{\lambda,\mu}(t,\omega)),v-u_{\lambda,\mu}(t,\omega)\rangle\geq0,\;\forall v\in K_{\mu},\;a.e.\,t\in[a,b],\;a.s.\,\omega\in\Omega
$$
is equivalent to the following variational inequality
$$
\langle\tilde{F}_{\lambda}(x_{\lambda,\mu},u_{\lambda,\mu}),v'-u_{\lambda,\mu}\rangle_{H_{[a,b]}}\geq0,\; \forall v' \in U_{\mu}[a,b],
$$
where $\tilde{F}_{\lambda}:\mathcal{L}^2_{ad}([a,b]\times \Omega,\mathbb{R}^p)\times U_{\mu}[a,b]\rightarrow  \mathcal{L}_{ad}^2([a,b]\times\Omega,\mathbb{R}^q)$ is defined by
\begin{align*}
&\tilde{F}_{\lambda}(x_{\lambda,\mu},u_{\lambda,\mu})(s,\omega):=F_{\lambda}(s,\omega,x_{\lambda,\mu}(s,\omega),u_{\lambda,\mu}(s,\omega)),\\ &\forall (x_{\lambda,\mu},u_{\lambda,\mu})\in \mathcal{L}^2_{ad}([a,b]\times \Omega,\mathbb{R}^p)\times U_{\mu}[a,b],\; \forall s\in[a,b],\; \forall \omega\in\Omega.
\end{align*}
\end{lemma}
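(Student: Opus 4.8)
\textbf{Proof proposal for Lemma \ref{requi}.}

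The plan is to prove the two implications separately, exploiting the fact that $U_\mu[a,b]$ consists exactly of those adapted square-integrable processes taking values in $K_\mu$ almost everywhere. First I would establish the easy direction: assume the pointwise variational inequality holds for a.e.\ $t$ and a.s.\ $\omega$. Fix any $v'\in U_\mu[a,b]$. Then for a.e.\ $t$ and a.s.\ $\omega$ we have $v'(t,\omega)\in K_\mu$, so we may substitute $v=v'(t,\omega)$ into the pointwise inequality to get $\langle F_\lambda(t,\omega,x_{\lambda,\mu}(t,\omega),u_{\lambda,\mu}(t,\omega)),v'(t,\omega)-u_{\lambda,\mu}(t,\omega)\rangle\ge 0$ for a.e.\ $t$, a.s.\ $\omega$. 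Integrating this nonnegative integrand over $[a,b]\times\Omega$ against $dt\otimes d\mathbb{P}$ — which is legitimate since $\tilde F_\lambda(x_{\lambda,\mu},u_{\lambda,\mu})$, $u_{\lambda,\mu}$, and $v'$ all lie in $H[a,b]$, so the inner product is integrable by Cauchy--Schwarz — yields exactly $\langle\tilde F_\lambda(x_{\lambda,\mu},u_{\lambda,\mu}),v'-u_{\lambda,\mu}\rangle_{H[a,b]}\ge 0$.

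For the converse, assume $\langle\tilde F_\lambda(x_{\lambda,\mu},u_{\lambda,\mu}),v'-u_{\lambda,\mu}\rangle_{H[a,b]}\ge 0$ for all $v'\in U_\mu[a,b]$, and suppose for contradiction that the pointwise inequality fails, i.e.\ there is some $w\in K_\mu$ and a set $E\subset[a,b]\times\Omega$ of positive $(dt\otimes d\mathbb{P})$-measure on which $\langle F_\lambda(t,\omega,x_{\lambda,\mu}(t,\omega),u_{\lambda,\mu}(t,\omega)),w-u_{\lambda,\mu}(t,\omega)\rangle<0$. The key step is to manufacture an admissible test process from this bad set: I would like to take $v'(t,\omega):=w\cdot\mathbf{1}_E(t,\omega)+u_{\lambda,\mu}(t,\omega)\cdot\mathbf{1}_{E^c}(t,\omega)$, which lies in $K_\mu$ pointwise (using $0\in K_\mu$ or convexity appropriately — here each value is either $w\in K_\mu$ or $u_{\lambda,\mu}(t,\omega)\in K_\mu$, so this is fine) and is square-integrable; the delicate point is \emph{adaptedness} and \emph{measurability} of $E$ and of $v'$. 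To handle this one should argue via a measurable-selection / density argument: it suffices to test against processes of the form $v'=w\,\mathbf{1}_A+u_{\lambda,\mu}\,\mathbf{1}_{A^c}$ where $w$ ranges over a countable dense subset of $K_\mu$ and $A$ ranges over a generating family of predictable (or progressively measurable) sets, and then conclude by a standard argument that the integrand $\langle F_\lambda(\cdot),w-u_{\lambda,\mu}(\cdot)\rangle$ must be $\ge 0$ a.e.\ for each such $w$, hence (taking the intersection over the countable dense set and using continuity of the inner product in $w$) for all $w\in K_\mu$ simultaneously, a.e.\ $t$, a.s.\ $\omega$. Plugging such a $v'$ into the $H[a,b]$-inequality gives $\int_E\langle F_\lambda(t,\omega,\dots),w-u_{\lambda,\mu}(t,\omega)\rangle\,dt\,d\mathbb{P}\ge 0$, contradicting that the integrand is strictly negative on a set of positive measure.

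The main obstacle is precisely this converse direction: one must be careful that the ``bad set'' $E$ can be chosen to be progressively measurable so that the constructed competitor $v'$ is itself an admissible element of $U_\mu[a,b]=\mathcal{L}^2_{ad}([a,b]\times\Omega,\mathbb{R}^q)$ with range in $K_\mu$ — a naive choice of $E$ need not respect the filtration. The clean way around this is to reduce, via separability of $\mathbb{R}^q$ and of the underlying $\sigma$-algebras, to countably many test functions built from progressively measurable indicator sets and a countable dense subset of $K_\mu$, invoking that $K_\mu$ is closed and convex (so that its intersection with a countable dense subset of $\mathbb{R}^q$ is dense in $K_\mu$). Everything else — nonnegativity of the integrand, Fubini/Tonelli to pass between the iterated integral and the integral over $[a,b]\times\Omega$, and integrability from the $H[a,b]$ membership hypotheses — is routine. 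Since this is essentially the equivalence established in \cite{zhang2023}, I would ultimately cite that reference for the measure-theoretic details while recording the above outline for the reader's convenience.
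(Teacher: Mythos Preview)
The paper does not prove this lemma; it is stated with a citation to \cite{zhang2023} and no argument is given in the body of the paper, so there is no in-paper proof to compare against. Your outline is the standard argument for this kind of pointwise-versus-integrated equivalence and is essentially correct.

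One minor comment on the converse direction: your worry about adaptedness of the bad set $E$ is slightly overstated. By hypothesis $\tilde F_\lambda(x_{\lambda,\mu},u_{\lambda,\mu})\in\mathcal{L}^2_{ad}([a,b]\times\Omega,\mathbb{R}^q)$ and $u_{\lambda,\mu}\in U_\mu[a,b]\subset\mathcal{L}^2_{ad}$, so for each fixed $w\in K_\mu$ the scalar process $(t,\omega)\mapsto\langle F_\lambda(t,\omega,x_{\lambda,\mu}(t,\omega),u_{\lambda,\mu}(t,\omega)),w-u_{\lambda,\mu}(t,\omega)\rangle$ is itself (a modification of) an adapted, jointly measurable process; hence its sublevel set $E=\{(t,\omega):\langle\cdot,\cdot\rangle<0\}$ is already progressively measurable up to a null set, and the competitor $v'=w\,\mathbf{1}_E+u_{\lambda,\mu}\,\mathbf{1}_{E^c}$ lies in $U_\mu[a,b]$ directly. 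The countable-dense-subset reduction you describe is then only needed to upgrade ``for each fixed $w$'' to ``for all $w\in K_\mu$ simultaneously, a.e.\ $(t,\omega)$'', which is routine by closedness of $K_\mu$.
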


\section{Mosco convergence of sets and strong convergence of projections}
\setcounter{equation}{0}
In this section, we extend the results in \cite{Gwinner2000,Gwinner2013} to general stochastic differential variational inequalities, and furthermore, we show that the Mosco convergence of the perturbed set implies point convergence of the projection onto the Hilbert space consisting of special stochastic processes whose range is the perturbed set.

\begin{defn}\label{mosco}\cite{Mosco1969}(Mosco convergence)
Let $\{C_n\}_{n\in \mathbb{Z}_{+}}$ be a sequence of closed convex subsets of a Hilbert space $H$, the sequence $\{C_n\}_{n\in \mathbb{Z}_{+}}$ is called Mosco convergent to a closed convex subset $C_0$ of $H$, written $C_n\overset{M}{\rightarrow}C_0$, if and only if
$$
 w-Ls_n C_n \subset C_0 \subset s-Li_n C_n.
$$
Here $s-Li_n C_n$ and $w-Ls_n C_n$ are defined as follows:
\begin{itemize}
  \item $x\in s-Li_n C_n$ if and only if there exists a sequence $\{x_n\}\subset C_n$ such that $\{x_n\}$ converges strongly to $x$;
  \item $x\in w-Ls_n C_n$ if and only if there exists a subsequence $\{C_{n_i}\}$ of $\{C_n\}$ and $\{x_{n_i}\}\subset C_{n_i}$ such that $\{x_{n_i}\}$ converges weakly to $x$.
\end{itemize}
\end{defn}

\begin{defn}\label{banach1}\cite{wang2011}
A Banach space $E$ is said to
\begin{description}
  \item[(i)] be strictly convex if for all $x,y\in S_E=\{z\in E: \|z\|=1\}$ with $x\neq y$ $\Rightarrow \|x+y\|<2$;
  \item[(ii)]  be uniformly convex if for any $\epsilon\in(0,2]$, there exists $\delta>0$ such that $$\forall x,y\in S_E,\; \|x-y\|\geq\epsilon\Rightarrow \left\|\frac{x+y}{2}\right\|<1-\delta;$$
  \item[(iii)] have the Kadec-Klee property if for any sequence $\{x_n\}_{n\in \mathbb{Z_{+}}}$ in E with $x_n\rightharpoonup x_0\in E(weak\; convergence)$ and $\lim_{n\rightarrow\infty}\|x_n\|=\|x_0\|$, we have $x_n\rightarrow x_0(strong\; convergence)$;
  \item[(iv)] be smooth if the norm of E is G\^{a}teaux differentiable, and the norm of E is G\^{a}teaux differentiable if $\lim_{t\rightarrow 0}h(x,y,t)$ exists for any $x,y\in S_E$, where $h:S_E\times S_E\times \mathbb{R}\backslash\{0\}\rightarrow\mathbb{R}$ and
      $h(x,y,t)=\frac{\|x+ty\|-\|x\|}{t}.$
\end{description}
\end{defn}

\begin{lemma}\label{banach}\cite{wang2011,Ibaraki2003}
Let $E$ be a smooth, reflexive, and strictly convex Banach space having the Kadec-Klee property. Assume that $\{C_n\}_{n\in\mathbb{Z}_{+}}$ and $C_0$ are all  nonempty closed convex subsets of $E$. If $C_n\overset{M}{\rightarrow}C_0$, then $P_{C_n}(x)$ converges strongly to $P_{C_0}(x)$ for all $x\in E$.
\end{lemma}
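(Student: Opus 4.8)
The plan is to prove the statement by a standard subsequence argument that exploits the two halves of the Mosco convergence separately. Fix $x\in E$ and write $p_n=P_{C_n}(x)$ and $p_0=P_{C_0}(x)$; under the standing hypotheses (reflexive, strictly convex, with smoothness as an inherited convenience) the metric projection onto each nonempty closed convex set is well defined and single-valued, so $p_n$ and $p_0$ make sense. Since strong convergence of the whole sequence $\{p_n\}$ to $p_0$ follows once one shows that every subsequence admits a further subsequence converging strongly to $p_0$, I would work throughout with an arbitrary subsequence, which for notational economy I still call $\{p_n\}$.

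First I would establish the a priori bound $\limsup_{n\to\infty}\|x-p_n\|\le \|x-p_0\| = d(x,C_0)$. This is where the inclusion $C_0\subset s\text{-}Li_n C_n$ is used: since $p_0\in C_0$, there is a sequence $z_n\in C_n$ with $z_n\to p_0$ strongly; then the minimality property of the metric projection gives $\|x-p_n\|\le\|x-z_n\|$, and letting $n\to\infty$ yields the claim. In particular $\{p_n\}$ is bounded, because $\|p_n\|\le\|x\|+\sup_n\|x-z_n\|<\infty$.

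Next, using reflexivity and the boundedness just obtained, I would pass to a further subsequence $\{p_{n_i}\}$ converging weakly to some $p^\ast\in E$. Because $p_{n_i}\in C_{n_i}$, the definition of $w\text{-}Ls_n C_n$ together with $w\text{-}Ls_n C_n\subset C_0$ forces $p^\ast\in C_0$. Weak lower semicontinuity of the norm applied to $x-p_{n_i}\rightharpoonup x-p^\ast$ gives $\|x-p^\ast\|\le\liminf_i\|x-p_{n_i}\|$, and combining this with the a priori bound from the previous step yields $\|x-p^\ast\|\le d(x,C_0)$. Since $p^\ast\in C_0$, equality must hold, so by uniqueness of the metric projection (here strict convexity enters) $p^\ast=p_0$, and moreover $\|x-p_{n_i}\|\to d(x,C_0)=\|x-p_0\|$.

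Finally I would invoke the Kadec-Klee property: $x-p_{n_i}\rightharpoonup x-p_0$ weakly and $\|x-p_{n_i}\|\to\|x-p_0\|$ together imply $x-p_{n_i}\to x-p_0$ strongly, hence $p_{n_i}\to p_0$ strongly. Since the original subsequence was arbitrary, $p_n\to p_0$ strongly. The main obstacle, compared with the Hilbert-space case (where the argument collapses because weak convergence plus convergence of norms automatically yields strong convergence via the parallelogram law), is precisely the need for the Kadec-Klee property to bridge the gap between weak and strong convergence; the remaining work is the careful pairing of the two Mosco inclusions, with the $s\text{-}Li$ part bounding the projection distances from above and the $w\text{-}Ls$ part trapping the weak cluster points inside $C_0$, together with the single-valuedness of the projection.
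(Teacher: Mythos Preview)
Your proof is correct and follows the standard route for this result. Note, however, that in the paper this lemma is not proved at all: it is quoted from \cite{wang2011,Ibaraki2003} as a known result and used as a black box, so there is no ``paper's own proof'' to compare against. Your argument is essentially the one found in those references---pairing the $s\text{-}Li$ inclusion to bound the projection distances, the $w\text{-}Ls$ inclusion to trap weak cluster points in $C_0$, strict convexity for uniqueness, and Kadec--Klee to upgrade weak-plus-norm convergence to strong convergence.
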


\begin{lemma}\label{proconv}
If $C_n\overset{M}{\rightarrow}C$, and $C_n,\;n\in \mathbb{Z}_+$, $C$ are all nonempty closed convex subsets of $H[0,T]$, then $P_{C_n}(u)$ converges strongly to $P_{C}(u)$ for all $u\in H[0,T]$.
\end{lemma}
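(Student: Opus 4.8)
The plan is to deduce Lemma~\ref{proconv} from the abstract result in Lemma~\ref{banach} by verifying that $H[0,T]=\mathcal{L}^2_{ad}([0,T]\times\Omega,\mathbb{R}^q)$ is a smooth, reflexive, strictly convex Banach space with the Kadec--Klee property. Since $H[0,T]$ is a Hilbert space (it carries the inner product $\langle u,v\rangle_{H[0,T]}=\int_0^T\mathbb{E}\langle u(t,\omega),v(t,\omega)\rangle\,dt$ and is complete by the standard $L^2$ completeness argument, the adapted processes forming a closed subspace because $L^2$-limits of $\mathcal{F}_t$-measurable functions are $\mathcal{F}_t$-measurable), all four properties come for free: every Hilbert space is reflexive; the norm is induced by an inner product, hence uniformly convex (parallelogram law), in particular strictly convex; it is smooth because the inner-product norm is Fr\'echet (hence G\^ateaux) differentiable away from $0$; and the Kadec--Klee property is the classical fact that in a Hilbert space $u_n\rightharpoonup u_0$ together with $\|u_n\|\to\|u_0\|$ forces $\|u_n-u_0\|^2=\|u_n\|^2-2\langle u_n,u_0\rangle+\|u_0\|^2\to 0$.

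Concretely, I would first state that $H[0,T]$ equipped with $\langle\cdot,\cdot\rangle_{H[0,T]}$ is a real Hilbert space; a one-line justification suffices, or one may simply cite the setup in Notation~\ref{notation} and Lemma~\ref{rconvex}. Next I would record the chain of elementary implications: Hilbert $\Rightarrow$ reflexive; Hilbert $\Rightarrow$ uniformly convex $\Rightarrow$ strictly convex; Hilbert $\Rightarrow$ smooth; Hilbert $\Rightarrow$ Kadec--Klee, giving a two-line proof of the last one via the identity above. Then, with $\{C_n\}$ and $C$ nonempty closed convex subsets of $H[0,T]$ and $C_n\overset{M}{\to}C$ by hypothesis, Lemma~\ref{banach} applies verbatim with $E=H[0,T]$, yielding $P_{C_n}(u)\to P_{C}(u)$ strongly for every $u\in H[0,T]$, which is exactly the assertion.

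There is essentially no serious obstacle here; the lemma is a specialization of Lemma~\ref{banach}. The only point deserving a sentence of care is that $H[0,T]$ really is a Banach (indeed Hilbert) space in its own right --- i.e. that the space of square-integrable $\mathcal{F}_t$-adapted $\mathbb{R}^q$-valued processes is closed in $L^2([0,T]\times\Omega)$ --- since Lemma~\ref{banach} requires the ambient space $E$ to be Banach with the listed geometric properties; once that is in place, strict convexity, smoothness, reflexivity and Kadec--Klee are automatic and the proof is immediate.
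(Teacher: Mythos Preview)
Your proposal is correct and follows essentially the same approach as the paper: both reduce the lemma to Lemma~\ref{banach} by verifying that the Hilbert space $H[0,T]$ is smooth, reflexive, strictly convex, and has the Kadec--Klee property. The only cosmetic difference is that the paper derives Kadec--Klee from uniform convexity via a citation, whereas you give the direct two-line Hilbert-space argument; both are fine.
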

\begin{proof}
It is enough to verify that $H[0,T]$ is a smooth, reflexive, and strictly convex Banach space having the Kadec-Klee property by Lemma \ref{banach}. Because $H[0,T]$ is a Hilbert space, it is obvious that $H[0,T]$ is a reflexive smooth Banach space (see \cite{Ammar2023}). Moreover, for any $u,v\in H[0,T]$,  one has the following formula
\begin{equation}\label{pro1}
\|u+v\|_{H[0,T]}^2+\|u-v\|_{H[0,T]}^2=2(\|u\|_{H[0,T]}^2+\|v\|_{H[0,T]}^2).
\end{equation}
From Definition \ref{banach1}, for any $u,v\in S_{H[0,T]}$ satisfying $\|u-v\|_{H[0,T]}\geq\epsilon$ with $\epsilon\in(0,2]$, it follows from \eqref{pro1} that $$\left\|\frac{u+v}{2}\right\|^2_{H[0,T]}<1-\left\|\frac{u-v}{2}\right\|^2_{H[0,T]}\leq 1-\frac{{\epsilon}^2}{4}.$$
This shows that $H[0,T]$ is uniformly convex Banach space. It is well known that every uniformly convex Banach space is strictly convex and enjoys the Kadec-Klee property (see Chapter 8 of \cite{Ammar2023}) and so $H[0,T]$ is a smooth, reflexive, and strictly convex Banach space having the Kadec-Klee property.
\end{proof}

\begin{lemma}\label{K00}\cite{Bauschke2017}(Polar set) Let $C$ be a subset of a Hilbert space $H$. The polar set of $C$ is
$$
C^o=\{y\in H|\;\langle x,y\rangle\leq1,\forall x\in C\}.
$$
Moreover, $C$ is closed, convex, and contains the origin if and only if $C^{oo}=C$.

\end{lemma}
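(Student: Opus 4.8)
The statement to prove is the \emph{bipolar} characterisation in the ``Moreover'' clause; the displayed formula for $C^o$ is a definition and needs no argument. The plan is to treat the two implications separately, using only elementary Hilbert-space geometry — in particular the projection theorem onto a nonempty closed convex set — rather than the full Hahn--Banach machinery.

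For the ``only if'' direction I would first record a fact valid for an \emph{arbitrary} subset $S\subseteq H$: the polar $S^o=\bigcap_{x\in S}\{y\in H:\langle x,y\rangle\le 1\}$ is an intersection of closed affine half-spaces, each of which contains the origin because $\langle x,0\rangle=0\le 1$; hence $S^o$ is always closed, convex, and contains $0$. Applying this with $S=C^o$ shows that $C^{oo}$ enjoys these three properties, and if $C=C^{oo}$ then so does $C$.

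For the ``if'' direction, assume $C$ is closed, convex and $0\in C$. The inclusion $C\subseteq C^{oo}$ is automatic: $x\in C$ forces $\langle x,y\rangle\le 1$ for every $y\in C^o$, i.e. $x\in C^{oo}$. The real work is $C^{oo}\subseteq C$, which I would establish by contraposition. Given $z\notin C$, let $p=P_C(z)$ (legitimate since $C$ is nonempty, closed, convex) and set $y_0:=z-p\neq 0$. The variational inequality characterising the projection gives $\langle z-p,\,x-p\rangle\le 0$ for all $x\in C$, hence $\langle x,y_0\rangle\le\langle p,y_0\rangle=:\alpha$ for all $x\in C$, while $\langle z,y_0\rangle=\alpha+\|y_0\|^2>\alpha$; and $0\in C$ gives $\alpha\ge 0$. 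If $\alpha>0$, then $y:=y_0/\alpha$ lies in $C^o$ yet $\langle z,y\rangle=1+\|y_0\|^2/\alpha>1$, so $z\notin C^{oo}$. If $\alpha=0$, then $ty_0\in C^o$ for every $t>0$ (because $\langle x,ty_0\rangle\le 0\le 1$), while $\langle z,ty_0\rangle=t\|y_0\|^2\to\infty$, so some element of $C^o$ again witnesses $z\notin C^{oo}$. Combining the two inclusions yields $C^{oo}=C$.

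The step I expect to be the main obstacle is the degenerate case $\alpha=0$, i.e. when $z$ can only be separated from $C$ by a hyperplane through the origin: there one must use that $C^o$ is invariant under scaling such ``homogeneous'' functionals by arbitrarily large positive constants in order to push the pairing with $z$ above $1$. The remaining ingredients — that polars are automatically closed, convex and contain $0$, and the always-valid inclusion $C\subseteq C^{oo}$ — are routine.
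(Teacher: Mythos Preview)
Your argument is correct: the ``only if'' direction follows because any polar is automatically closed, convex, and contains the origin, and your ``if'' direction via the projection $p=P_C(z)$ and the separating vector $y_0=z-p$, with the case split on whether $\alpha=\langle p,y_0\rangle$ is positive or zero, is the standard clean proof of the bipolar theorem in Hilbert space.

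There is nothing to compare against, however: the paper does not prove this lemma at all. It is stated as a quotation from \cite{Bauschke2017} and used as a black box in the proof of Lemma~\ref{Un}. So you have supplied a self-contained proof where the paper simply cites the result; your approach via the Hilbert-space projection theorem is in fact the argument given in Bauschke--Combettes (Theorem~7.8 there), so you have essentially reconstructed the cited reference.
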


\begin{lemma}\cite{Gwinner2013}\label{gwinner}
For any $n\in\mathbb{Z}_{+}$, let $K_n$ and $K$ are all closed convex subsets of a Hilbert space $V$. If $K_{n}\overset{M}{\rightarrow}K$, then $K_{n}^o\overset{M}{\rightarrow}K^o$.
\end{lemma}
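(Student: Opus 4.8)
The plan is to prove the two Mosco inclusions $w-Ls_n K_n^o\subset K^o$ and $K^o\subset s-Li_n K_n^o$ separately. I would first record the harmless fact that for any (nonempty) subset $C$ of $V$ the polar $C^o=\bigcap_{x\in C}\{y\in V:\langle x,y\rangle\le1\}$ is automatically closed, convex and contains $0$, so that $K_n^o$ and $K^o$ are legitimate arguments for Mosco convergence and for Lemma \ref{K00}.

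For the inclusion $w-Ls_n K_n^o\subset K^o$ I would argue directly. Let $y\in w-Ls_n K_n^o$, so there are indices $n_i\to\infty$ and $y_{n_i}\in K_{n_i}^o$ with $y_{n_i}\rightharpoonup y$, and fix an arbitrary $x\in K$. Because $K_n\overset{M}{\rightarrow}K$ gives $K\subset s-Li_n K_n$, there is a sequence $x_n\in K_n$ with $x_n\to x$ strongly, and then $\langle x_{n_i},y_{n_i}\rangle\le1$ for every $i$; since $\{y_{n_i}\}$ is bounded (being weakly convergent), $\langle x_{n_i},y_{n_i}\rangle=\langle x_{n_i}-x,y_{n_i}\rangle+\langle x,y_{n_i}\rangle\to\langle x,y\rangle$, so $\langle x,y\rangle\le1$. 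As $x\in K$ was arbitrary, $y\in K^o$. This direction uses only the ``$s-Li$'' half of the hypothesis together with the strong--weak continuity of the inner product.

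The inclusion $K^o\subset s-Li_n K_n^o$ is the hard one, and I expect it to be the main obstacle. My plan is to pass to support functions: write $C^o=\{z\in V:\sigma_C(z)\le1\}$, where $\sigma_C(z)=\sup_{x\in C}\langle x,z\rangle$ is exactly the Fenchel conjugate $\delta_C^{*}$ of the indicator $\delta_C$. Upon unravelling Definition \ref{mosco}, the hypothesis $K_n\overset{M}{\rightarrow}K$ is the same as the Mosco epi-convergence $\delta_{K_n}\to\delta_K$, and by Mosco's duality theorem for conjugate convex functions in a reflexive (here Hilbert) space \cite{Mosco1969} this is equivalent to the Mosco epi-convergence $\sigma_{K_n}=\delta_{K_n}^{*}\to\delta_K^{*}=\sigma_K$. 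The recovery-sequence part of epi-convergence then furnishes, for every $z\in V$, a \emph{strongly} convergent sequence $z_n\to z$ with $\limsup_n\sigma_{K_n}(z_n)\le\sigma_K(z)$. I would apply this to $z=(1-\varepsilon)y$ with $\varepsilon\in(0,1)$: since $\sigma_K$ is positively homogeneous and $\sigma_K(y)\le1$ (because $y\in K^o$), one has $\sigma_K((1-\varepsilon)y)=(1-\varepsilon)\sigma_K(y)<1$, so the associated recovery sequence satisfies $\sigma_{K_n}(z_n)\le1$ for all large $n$, i.e. $z_n\in K_n^o$ eventually; hence $(1-\varepsilon)y\in s-Li_n K_n^o$. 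Since $s-Li_n K_n^o$ is strongly closed and $(1-\varepsilon)y\to y$ as $\varepsilon\downarrow0$, this gives $y\in s-Li_n K_n^o$, completing the proof.

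The genuine difficulty, as indicated above, is the transfer of Mosco convergence from the sets $K_n$ to their support functions $\sigma_{K_n}$ — equivalently, the Mosco-continuity of polarity — which is the core of \cite{Mosco1969}; the rest is routine. If one prefers to avoid epi-convergence altogether, an alternative plan for the hard inclusion is to take $y_n:=P_{K_n^o}(y)$: the projection inequality tested against $0\in K_n^o$ yields $\|y_n\|^2\le\langle y,y_n\rangle\le\|y\|\,\|y_n\|$, hence $\{y_n\}$ is bounded and, by the first inclusion, all its weak cluster points lie in $K^o$; the remaining step $\|y_n-y\|=\mathrm{dist}(y,K_n^o)\to0$ then reduces to a recession-type estimate bounding how far the $K_n$ can extend in the direction of a fixed $y\in K^o$, which carries essentially the same content.
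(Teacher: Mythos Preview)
The paper does not supply its own proof of this lemma; it is simply quoted from \cite{Gwinner2013}, so there is no in-paper argument to compare against.

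Your main argument is correct. The inclusion $w\text{-}Ls_n K_n^o\subset K^o$ is handled cleanly by pairing the weakly convergent $y_{n_i}$ with a strongly convergent recovery sequence $x_{n_i}\to x$. For the hard inclusion $K^o\subset s\text{-}Li_n K_n^o$, the route through Mosco's conjugate-duality theorem (Mosco epi-convergence of $\delta_{K_n}$ implies that of $\sigma_{K_n}=\delta_{K_n}^*$) is exactly the right tool, and the $(1-\varepsilon)$-shrinking device correctly forces the recovery sequence into the sublevel set $\{\sigma_{K_n}\le 1\}=K_n^o$ for large $n$; note that $(1-\varepsilon)\sigma_K(y)<1$ holds regardless of the sign of $\sigma_K(y)$, so you are not tacitly assuming $0\in K$. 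The closedness of $s\text{-}Li_n K_n^o$ (a routine diagonal argument) then lets $\varepsilon\downarrow 0$.

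The alternative projection-based sketch you mention at the end is not complete as stated: boundedness of $y_n=P_{K_n^o}(y)$ and the fact that weak cluster points lie in $K^o$ do not by themselves give $\|y_n-y\|\to 0$; one still needs the same Mosco-duality input (or an equivalent recession argument) to rule out $\|y_n-y\|$ staying bounded away from zero. Your primary argument already contains the substantive step, so I would keep that one.
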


\begin{lemma}\label{Un}
Let $K_{\mu}$ and $K_{\mu_n}$ be defined by Notations \ref{notation}. If $K_{\mu_n}\overset{M}{\rightarrow}K_{\mu}$,
 then $U_{\mu_n}[0,T]\overset{M}{\rightarrow}U_{\mu}[0,T].$
\end{lemma}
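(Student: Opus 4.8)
The plan is to establish directly the two inclusions that constitute Mosco convergence in the Hilbert space $H[0,T]$, namely
$$w\text{-}Ls_n\, U_{\mu_n}[0,T]\subset U_\mu[0,T]\subset s\text{-}Li_n\, U_{\mu_n}[0,T],$$
keeping in mind that all the sets involved are nonempty, closed and convex by Lemma \ref{rconvex}, and that $0\in K_\mu$. The lower inclusion will be a dominated-convergence argument on pointwise projections, while the upper inclusion is the delicate one and will be reduced, via the bipolar identity (Lemma \ref{K00}) and Lemma \ref{gwinner}, to a weak-against-strong pairing estimate. Throughout I use that in the finite-dimensional space $\mathbb{R}^q$ a point $y$ lies in $s\text{-}Li_n C_n$ if and only if $d(y,C_n)\to 0$, where $d(\cdot,\cdot)$ denotes distance to a set.

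\emph{The inclusion $U_\mu[0,T]\subset s\text{-}Li_n\, U_{\mu_n}[0,T]$.} Given $u\in U_\mu[0,T]$, I would set $u_n(t,\omega):=P_{K_{\mu_n}}(u(t,\omega))$. Since $P_{K_{\mu_n}}$ is continuous, $u_n$ is again $\mathcal{F}_t$-adapted, and by nonexpansiveness of the projection $\|u_n(t,\omega)\|\le d(0,K_{\mu_n})+\|u(t,\omega)\|$, which is square integrable on $[0,T]\times\Omega$; hence $u_n\in U_{\mu_n}[0,T]$. Because $u(t,\omega)\in K_\mu\subset s\text{-}Li_n K_{\mu_n}$ for a.e.\ $(t,\omega)$, one has $\|u_n(t,\omega)-u(t,\omega)\|=d(u(t,\omega),K_{\mu_n})\to0$ as $n\to\infty$; moreover $0\in K_\mu\subset s\text{-}Li_n K_{\mu_n}$ forces $\sup_n d(0,K_{\mu_n})<\infty$, so these pointwise differences are dominated by the fixed function $\|u(t,\omega)\|+\sup_n d(0,K_{\mu_n})\in H[0,T]$. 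The dominated convergence theorem then yields $\|u_n-u\|_{H[0,T]}\to0$, which is exactly $u\in s\text{-}Li_n U_{\mu_n}[0,T]$.

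\emph{The inclusion $w\text{-}Ls_n\, U_{\mu_n}[0,T]\subset U_\mu[0,T]$.} This is the main obstacle: weak convergence in $H[0,T]$ carries no pointwise information, and one cannot simply push forward the half-space constraints that cut out $K_\mu$, since under Mosco convergence the support values $\sup_{z\in K_{\mu_n}}\langle z,p\rangle$ need not remain bounded. To bypass this I would work with the polars. By Lemma \ref{gwinner}, $K_{\mu_n}^o\overset{M}{\rightarrow}K_\mu^o$, so in particular $K_\mu^o\subset s\text{-}Li_n K_{\mu_n}^o$. Let $u_{n_i}\in U_{\mu_{n_i}}[0,T]$ with $u_{n_i}\rightharpoonup u$ in $H[0,T]$; the sequence is norm bounded, say $\|u_{n_i}\|_{H[0,T]}\le C$. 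Fix $p\in K_\mu^o$ and choose $p_n\in K_{\mu_n}^o$ with $p_n\to p$ in $\mathbb{R}^q$. Since $u_{n_i}(t,\omega)\in K_{\mu_{n_i}}$ and $p_{n_i}\in K_{\mu_{n_i}}^o$, we get $\langle u_{n_i}(t,\omega),p_{n_i}\rangle\le1$ for a.e.\ $(t,\omega)$. Testing with an arbitrary $\psi\in\mathcal{L}^2_{ad}([0,T]\times\Omega;\mathbb{R})$ with $\psi\ge0$ gives
$$\langle u_{n_i},\psi p_{n_i}\rangle_{H[0,T]}=\int_0^T\mathbb{E}\big[\psi\langle u_{n_i},p_{n_i}\rangle\big]\,dt\le\int_0^T\mathbb{E}[\psi]\,dt.$$
Writing $\psi p_{n_i}=\psi p+\psi(p_{n_i}-p)$, using $|\langle u_{n_i},\psi(p_{n_i}-p)\rangle_{H[0,T]}|\le C\,\|\psi\|_{H[0,T]}\,\|p_{n_i}-p\|\to0$ and $\langle u_{n_i},\psi p\rangle_{H[0,T]}\to\langle u,\psi p\rangle_{H[0,T]}$ (as $\psi p\in H[0,T]$ is fixed and $u_{n_i}\rightharpoonup u$), I would pass to the limit to obtain $\int_0^T\mathbb{E}\!\left[\psi(\langle u,p\rangle-1)\right]dt\le0$.

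Finally, taking $\psi=\mathbf 1_{\{\langle u(\cdot,\cdot),p\rangle>1\}}$ — which is nonnegative, bounded, and $\mathcal{F}_t$-adapted because $u$ is, hence lies in $H[0,T]$ — forces the set $\{(t,\omega):\langle u(t,\omega),p\rangle>1\}$ to be $(dt\otimes\mathbb{P})$-null, i.e.\ $\langle u(t,\omega),p\rangle\le1$ a.e. Letting $p$ run over a countable dense subset $Q$ of $K_\mu^o$ and using the continuity of $p\mapsto\langle u(t,\omega),p\rangle$, one concludes that for a.e.\ $(t,\omega)$ the inequality $\langle u(t,\omega),p\rangle\le1$ holds for every $p\in K_\mu^o$, i.e.\ $u(t,\omega)\in K_\mu^{oo}=K_\mu$ by Lemma \ref{K00} (here $0\in K_\mu$ is used). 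As a weak limit in $H[0,T]$, $u$ is itself adapted and square integrable, so $u\in U_\mu[0,T]$, completing the upper inclusion and hence the proof. The two points that genuinely require care are precisely the replacement of the $K_\mu$-constraints by the approximating $K_{\mu_n}^o$-constraints — so that the right-hand sides of the constraints stay bounded along the subsequence — and the verification that the indicator test function $\psi$ is adapted, which is what lets the scalar inequality be localized to a set of positive measure.
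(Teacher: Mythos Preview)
Your proof is correct and follows essentially the same route as the paper: the $s\text{-}Li_n$ inclusion via pointwise projections $P_{K_{\mu_n}}(u(t,\omega))$ and dominated convergence, and the $w\text{-}Ls_n$ inclusion via the bipolar identity $K_\mu=K_\mu^{oo}$ combined with Lemma~\ref{gwinner} to approximate polar vectors. Your version is in fact slightly tidier in two places---you exploit $0\in K_\mu$ to get a fixed dominating function (the paper uses a moving sequence $c_n\to c$ and a generalized DCT), and you explicitly handle the dependence of the exceptional null set on $p\in K_\mu^o$ via a countable dense subset, a point the paper glosses over.
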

\begin{proof}
We first claim: 1. $w-Ls_n U_{\mu_n}[0,T] \subset U_{\mu}[0,T]$.

For any $f\in w-Ls_n U_{\mu_n}[0,T]$, there exists a subsequence $\{U_{\mu_{n_i}}[0,T]\}$ of $U_{\mu_n}[0,T]$ and $f_{i}\in H[0,T]$ such that $\{f_i\}$ converges weakly to $f$ and $f_i\in U_{\mu_{n_i}}[0,T]$. By Lemma \ref{K00} and Notations \ref{notation}, one has
$K_{\mu}=K_{\mu}^{oo},$ and so it is enough to show that $f(t,\omega)\in K_{\mu}^{oo},\;a.e. \, t\in [0,T], \; a.s. \, \omega\in\Omega$, which means that for any $\xi\in K_{\mu}^o$, for $a.e. \, t\in [0,T], \; a.s. \, \omega\in\Omega$ there holds $\langle f(t,\omega),\xi \rangle\leq1$. Assume the contrapositive, then there exists $\bar{\xi}\in K_{\mu}^o$ and $A=\{(t,\omega)|\; \langle f(t,\omega),\bar{\xi}\rangle>1,t\in[0,T],\;\omega\in\Omega\}$ such that the measure $|A|=\int^T_0\int_{\Omega}1_{A}(t,\omega)d\mathbb{P}dt>0$, where
$$1_{A}(t,\omega)=\begin{cases}
 1, & (t,w)\in A; \\
 0,  & (t,\omega)\notin A.
\end{cases}$$
Let $\bar{u}(t,\omega)=\frac{1}{|A|}\bar{\xi}1_{A}(t,\omega)$. By Lemma \ref{gwinner}, one has
$K_{\mu_n}^o\overset{M}{\rightarrow}K_{\mu}^o.$ Thus there exists a sequence $\bar{\xi}_i\in K^o_{\mu_{n_i}}$ such that $\bar{\xi_i}$ converges strongly to $\bar{\xi}\in K_{\mu}^o$. Letting $\bar{u}_i(t,\omega)=\frac{1}{|A|}\bar{\xi}_i1_{A}(t,\omega)$, one has $\bar{u}_i\rightarrow \bar{u}$ in $H[0,T]$. By construction,
$$
\langle f_i,\bar{u}_i\rangle=\frac{1}{|A|}\int^T_0\int_{\Omega}\langle f_i(t,\omega),\bar{\xi}_i1_{A}(t,\omega)\rangle d\mathbb{P}dt\leq1.
$$
Thus in the limit we arrive at
$$
1\geq \langle f,\bar{u}\rangle=\frac{1}{|A|}\int^T_0\int_{\Omega}\langle f(t,\omega),\bar{\xi}1_{A}(t,\omega)\rangle d\mathbb{P}dt>1,
$$
which is a contradiction. This means that the claim 1 is true.

Next we claim: 2. $U_{\mu}[0,T] \subset s-Li_n U_{\mu_n}[0,T]$.

For any $f\in U_{\mu}[0,T]$, we need to find a sequence $\{f_n\}$ with $f_n\in U_{\mu_n}[0,T]$ and $f_n\rightarrow f$ in $H[0,T]$. Let $f_n(t,\omega)=P_{K_{\mu_n}}(f(t,\omega)),\;t\in[0,T],\;\omega\in\Omega.$ We only need to verify that, for each $n\in \mathbb{Z}_+$, $f_n$ is $\mathcal{F}_t$-adapted, integrable and $f_n\rightarrow f$ in $H[0,T]$.

(i) We verify that $f_n$ is $\mathcal{F}_t$-adapted. In fact, it is well known that $P_{K_{\mu_n}}:\mathbb{R}^q\rightarrow\mathbb{R}^q$ is continuous and so $P_{K_{\mu_n}}$ is Borel measurable. For any $B\in\mathcal{B}(\mathbb{R}^q)$, $P^{-1}_{K_{\mu_n}}(B)=C\in\mathcal{B}(\mathbb{R}^q)$. For any fixed $t\in[0,T]$, let $g_t(\omega)=f(t,\omega)$. Then it is clear that $g_t^{-1}(C)\in \mathcal{F}_t$ since $f\in U_{\mu}[0,T]$. Thus for any fixed $t\in[0,T]$, $f^{-1}_n(B)=g_t^{-1}P^{-1}_{K_{\mu_n}}(B)\in\mathcal{F}_t$, which means $f_n$ is $\mathcal{F}_t$-adapted.

(ii) We show that $f_n$ is integrable in $H[0,T]$. Indeed,  for any $n\in\mathbb{Z}_+$, choose $c_n\in K_{\mu_n}$, one has $P_{K_{\mu_n}}(c_n)=c_n$. Moreover,
 \begin{align*}
 \int_0^T\mathbb{E}\|f_n(t,\omega)\|^2dt
 &=\int_0^T\mathbb{E}\|P_{K_{\mu_n}}(f(t,\omega))-P_{K_{\mu_n}}(c_n)+P_{K_{\mu_n}}(c_n)\|^2dt\nonumber\\
 &\leq 2\int_0^T\mathbb{E}\|P_{K_{\mu_n}}(f(t,\omega))-P_{K_{\mu_n}}(c_n)\|^2dt+2\int_0^T\mathbb{E}\|P_{K_{\mu_n}}(c_n)\|^2dt\nonumber\\
 &\leq2\int_0^T\mathbb{E}\|f(t,\omega)-c_n\|^2dt+2T\|c_n\|^2\nonumber\\
 &\leq4\int_0^T\mathbb{E}\|f(t,\omega)\|^2dt+4\int_0^T\mathbb{E}\|c_n\|^2dt+2T\|c_n\|^2\nonumber\\
 &\leq4\int_0^T\mathbb{E}\|f(t,\omega)\|^2dt+6T\|c_n\|^2\nonumber\\
 &<\infty.
 \end{align*}
Thus $f_n$ is integrable in $H[0,T]$ and so $f_n\in U_{\mu_n}[0,T]$.

 (iii) We prove that $f_n\rightarrow f$ in $H[0,T]$. In fact, by Lemma \ref{banach}, one has $f_n(t,\omega)=P_{K_{\mu_n}}(f(t,\omega))\rightarrow P_{K_{\mu}}(f(t,\omega))=f(t,\omega)$ for all $t\in[0,T]$ and  $\omega\in\Omega$. Thus, for any $c\in K_{\mu}$, there exists a sequence $\{c_n\}$ such that $c_n\in K_{\mu_n}$ and $c_n\rightarrow c$ in $\mathbb{R}^q$. For any $t\in[0,T]$ and $\omega\in\Omega$,
\begin{align*}
\|f_n(t,\omega)\|&\leq\|f_n(t,\omega)-c_n\|+\|c_n\|\nonumber\\
&\leq\|P_{K_{\mu_n}}(f(t,\omega))-P_{K_{\mu_n}}(c_n)\|+\|c_n\|\nonumber\\
&\leq\|f(t,\omega)-c_n\|+\|c_n\|\nonumber\\
&\leq\|f(t,\omega)\|+2\|c_n\|.
\end{align*}
If $g_n(t,\omega)=\|f(t,\omega)\|+2\|c_n\|$ and $g=\|f(t,\omega)\|+2\|c\|$, then $\|f_n(t,\omega)\|\leq g_n(t,\omega)$ for all $t\in[0,T]$ and $\omega\in\Omega$, and
\begin{align}\label{gn}
\int_0^T\mathbb{E}\|g_n(t,\omega)-g(t,\omega)\|^2dt&\leq4\int_0^T\mathbb{E}\|\|c_n\|-\|c\|\|^2dt\nonumber\\
&\leq 4\int_0^T\mathbb{E}\|c_n-c\|^2dt\nonumber\\
&\leq 4T\|c_n-c\|^2.
\end{align}
It follows from \eqref{gn} and $c_n\rightarrow c$ that $g_n\rightarrow g$ in $H[0,T]$. By the dominated convergence theorem, we know that $f_n\rightarrow f$ in $H[0,T]$.

Up to now, by (i)-(iii), there is a sequence $f_n=P_{K_{\mu_n}}(f(t,\omega))\in U_{\mu_n}[0,T]$ such that $f_n\rightarrow f$ in $H[0,T]$. Thus, the claim 2 is true.

In conclusion, it follows from claims 1 and 2 that $U_{\mu_n}[0,T]\overset{M}{\rightarrow}U_{\mu}[0,T]$.
\end{proof}
\begin{remark}\label{G2013}
Stochastic processes possess more complex properties of convergence, measurability and integrability. Consequently, some methods for functional analysis of deterministic Lebesgue space cannot be directly applied. In order to extend the results of Lemma 2 in \cite{Gwinner2013} from the deterministic case to the stochastic one, it is necessary to employ a different method from that used in \cite{Gwinner2013}, as outlined in Lemma \ref{Un}. In addition, an  analogous result for a very special static random variational inequality can be consulted in \cite{Gwinner2000}. However, while Gwinner in \cite{Gwinner2000} only gave a convergence result for a special constraint set consisting of measurable functions and closed convex cones for the static case, our result yields Mosco convergence for more general closed convex sets in a more complex dynamic stochastic environment.
\end{remark}

Now we have the following new result for the Mosco convergence and projections.
\begin{thm}\label{pro}
 If $K_{\mu_n}\overset{M}{\rightarrow}K_{\mu}$, then for any $u\in H[0,T]$, $P_{U_{\mu_n}[0,T]}(u)$ converges strongly to $P_{U_{\mu}[0,T]}(u)$.
\end{thm}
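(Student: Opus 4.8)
The plan is to chain together the two results that have already been established in this section: Lemma \ref{Un}, which upgrades Mosco convergence of the finite-dimensional sets $K_{\mu_n}\overset{M}{\rightarrow}K_{\mu}$ to Mosco convergence of the associated spaces of adapted processes $U_{\mu_n}[0,T]\overset{M}{\rightarrow}U_{\mu}[0,T]$ in the Hilbert space $H[0,T]$, and Lemma \ref{proconv}, which asserts that Mosco convergence of nonempty closed convex subsets of $H[0,T]$ forces strong pointwise convergence of the metric projections. So the first step is simply to invoke Lemma \ref{Un} to pass from the hypothesis $K_{\mu_n}\overset{M}{\rightarrow}K_{\mu}$ to $U_{\mu_n}[0,T]\overset{M}{\rightarrow}U_{\mu}[0,T]$.

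Before applying Lemma \ref{proconv}, I would check that its hypotheses are met: $U_{\mu_n}[0,T]$ and $U_{\mu}[0,T]$ must all be nonempty, closed, and convex subsets of $H[0,T]$. This is exactly the content of Lemma \ref{rconvex} (with $[a,b]=[0,T]$), using that each $K_{\mu_n}$ and $K_{\mu}$ is a nonempty closed convex subset of $\mathbb{R}^q$ — indeed Notations \ref{notation} stipulates $0\in K_{\mu}$ and each $K_{\mu_n}$ is closed convex, and one can note nonemptiness follows since $0$ belongs to each set or, more robustly, since the constant process valued at any point of the set lies in $U$. With these structural facts in hand, Lemma \ref{proconv} applies verbatim and yields that $P_{U_{\mu_n}[0,T]}(u)\to P_{U_{\mu}[0,T]}(u)$ strongly for every $u\in H[0,T]$, which is precisely the claim.

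In short, the proof is a two-line deduction: Lemma \ref{Un} gives $U_{\mu_n}[0,T]\overset{M}{\rightarrow}U_{\mu}[0,T]$, and then Lemma \ref{proconv} (whose applicability rests on Lemma \ref{rconvex}) gives the strong convergence of the projections. There is no real obstacle remaining at this stage, since all the substantive work — transferring Mosco convergence to the process spaces while controlling adaptedness and square-integrability, and verifying that $H[0,T]$ is a uniformly convex (hence strictly convex, reflexive, smooth, Kadec–Klee) space so that the Banach-space projection theorem of Lemma \ref{banach} applies — has already been carried out in Lemmas \ref{Un}, \ref{proconv}, and \ref{rconvex}. If one insists on identifying a delicate point, it is the implicit use in Lemma \ref{proconv} that $H[0,T]$, being a Hilbert space, inherits all four geometric properties required by Lemma \ref{banach}; but that has been settled, so here it is pure bookkeeping.
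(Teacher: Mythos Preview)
Your proposal is correct and follows exactly the same approach as the paper: invoke Lemma \ref{Un} to obtain $U_{\mu_n}[0,T]\overset{M}{\rightarrow}U_{\mu}[0,T]$, then apply Lemma \ref{proconv} (with Lemma \ref{rconvex} supplying the nonempty closed convex structure) to conclude strong convergence of the projections.
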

\begin{proof}
It follows from Lemma \ref{Un} that $U_{\mu_n}[0,T]\overset{M}{\rightarrow}U_{\mu}[0,T].$ Using Lemmas \ref{rconvex} and \ref{proconv}, we can see directly that, for any $u\in H[0,T]$, $P_{U_{\mu_n}[0,T]}(u)$ converges strongly to $P_{U_{\mu}[0,T]}(u)$.
\end{proof}

\section{Stability of the solutions to SFDVI with L\'{e}vy jump}
\setcounter{equation}{0}

In this section, we are in the position to consider the multi-parameter stability for SFDVI with L\'{e}vy jump. To this end, admit perturbations $b_{\lambda_m},\sigma_{\lambda_m},\sigma_{1\lambda_m},G_{\lambda_m},F_{\lambda_m}$ of the mappings $b_{\lambda},\sigma_{\lambda},\sigma_{1\lambda},G_{\lambda},F_{\lambda}$, and $K_{\mu_n}$ of the convex closed subset $K_{\mu}\subset \mathbb{R}^q$. Similar to the study of the stability for DVIs in \cite{Gwinner2013}, we obtain  some conditions on $b_{\lambda_m},\sigma_{\lambda_m},\sigma_{1\lambda_m},G_{\lambda_m},F_{\lambda_m}$ and $K_{\mu_n}$ for ensuring the solution sequence of perturbed systems ${\bf MPS}(\lambda_m,\mu_n)$ converges to a solution for system ${\bf MPS}(\lambda,\mu)$. In the sequel, we impose the following assumptions:
\begin{assumption}\label{ass}
  For any $\lambda,\mu\in(M,d)$, and any $t\in [0,T]$ with $T>0$, $x,x_1,x_2\in \mathbb{R}^p,u,u_1,u_2\in \mathbb{R}^q$, and $\tilde{x},\tilde{x}_1,\tilde{x}_2\in \mathcal{L}^2_{ad}([0,T]\times \Omega,\mathbb{R}^p), \tilde{u}_1,\tilde{u}_2\in H[0,T]$, assume that there exist some positive constants $\bar{C}$, $L_{b_{\lambda}}$, $L_{\sigma_{1\lambda}}$, $L_{\sigma_{\lambda}}$, $L_{g}$, $L_{G_{\lambda}}$, $K_{b_{\lambda}}$, $K_{\sigma_{1\lambda}}$, $K_{\sigma_{\lambda}}$, $K_{G_{\lambda}}$, and $L_{F}$ with $L_{F}>\bar{C}$ such that

\begin{itemize}
  \item[(i)]
  \noindent

   $\| b_{\lambda}(t,x,u) \|^2\leq K_{b_{\lambda}}(1+\| x \|^2+\| u \|^2)$;

  $\| \sigma_{\lambda}(t,x,u) \|^2_{\mathbb{R}^{n\times l}}\leq K_{\sigma_{\lambda}}(1+\| x \|^2+\| u \|^2)$;

   $\| \sigma_{1\lambda}(t,x,u) \|^2\leq K_{\sigma_{1\lambda}}(1+\| x \|^2+\| u \|^2)$;

   $\int_{\| y\|<c}\| G_{\lambda}(t,x,u,y) \|^2v(dy)\leq K_{G_{\lambda}}(1+\| x \|^2+\| u \|^2)$;

  \item[(ii)]
 \noindent

  $\| b_{\lambda}(t,x_1,u_1)-b_{\lambda}(t,x_2,u_2) \|^2\leq L_{b_{\lambda}}(\| x_1-x_2 \|^2+\| u_1-u_2 \|^2)$;

  $\| \sigma_{\lambda}(t,x_1,u_1)-\sigma_{\lambda}(t,x_2,u_2) \|^2_{\mathbb{R}^{n\times l}}\leq L_{\sigma_{\lambda}}(\| x_1-x_2 \|^2+\| u_1-u_2 \|^2)$;

  $\| \sigma_{1\lambda}(t,x_1,u_1)-\sigma_{1\lambda}(t,x_2,u_2) \|^2\leq L_{\sigma_{1\lambda}}(\| x_1-x_2 \|^2+\| u_1-u_2 \|^2)$;

  $\int_{\| y\|<c}\| G_{\lambda}(t,x_1,u_1,y)-G_{\lambda}(t,x_2,u_2,y) \|^2v(dy)\leq L_{G_{\lambda}}(\| x_1-x_2 \|^2+\| u_1-u_2 \|^2)$;
  \item[(iii)] $\tilde{F}_{\lambda}$ defined by Lemma \ref{requi} satisfies
  \begin{align*}
  &\| \tilde{F}_{\lambda}(\tilde{x}_1,\tilde{u}_1)-\tilde{F}_{\lambda}(\tilde{x}_2,\tilde{u}_2)\|_{H[0,T]}\leq L_{F}(\| \tilde{x}_1-\tilde{x}_2 \|_{H[0,T]}+\| \tilde{u}_1-\tilde{u}_2 \|_{H[0,T]});\\
&\langle \tilde{F}_{\lambda}(\tilde{x},\tilde{u}_1)-\tilde{F}_{\lambda}(\tilde{x},\tilde{u}_2),\tilde{u}_1-\tilde{u}_2 \rangle_{H[0,T]} \geq \bar{C}\| \tilde{u}_1-\tilde{u}_2\|_{H[0,T]};
\end{align*}

  \item[(iv)]
    \noindent
    $b_{\lambda},\sigma_{\lambda},\sigma_{1\lambda},G_{\lambda},F_{\lambda}$ are all uniform continuous with respect to $\lambda$.
\end{itemize}
\end{assumption}

Thanks to the proof of Lemmas 3.1 in \cite{zhang2023s}, we have the following lemma.

\begin{lemma}\cite{zhang2023s}\label{r31}
For any fixed $\mu\in(M,d)$ and fixed $x \in \mathcal{L}^2_{ad}([0,T]\times \Omega,\mathbb{R}^p)$, if $\tilde{F}_{\lambda}$ defined by Lemma \ref{requi} satisfies the condition $(iii)$ in Assumption \ref{ass}, then there exists a unique $u=P_{U_{\mu}[0,T]}(u-\rho \tilde{F}_{\lambda}(x,u))$ with $\rho>0$ such that
$$
\langle F_{\lambda}(t,\omega,x(t,\omega),u(t,\omega)),v-u(t,\omega)\rangle\geq0,\;\forall v\in K_{\mu},\;a.e.\,t\in[0,T],\;a.s.\,\omega\in\Omega.
$$
\end{lemma}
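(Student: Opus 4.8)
The plan is to carry out the classical projection--contraction scheme for variational inequalities, but performed in the Hilbert space $H[0,T]$ instead of in $\mathbb{R}^q$. First I would translate the pointwise (in $t,\omega$) variational inequality into one posed in $H[0,T]$: by Lemma \ref{requi}, for the fixed $x$ the statement
$$\langle F_{\lambda}(t,\omega,x(t,\omega),u(t,\omega)),v-u(t,\omega)\rangle\geq0,\quad\forall v\in K_{\mu},\ a.e.\ t\in[0,T],\ a.s.\ \omega\in\Omega$$
is equivalent to requiring $u\in U_{\mu}[0,T]$ together with $\langle\tilde{F}_{\lambda}(x,u),v'-u\rangle_{H[0,T]}\geq0$ for all $v'\in U_{\mu}[0,T]$. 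By the definition of $\tilde{F}_{\lambda}$ in Lemma \ref{requi}, the map $u\mapsto\tilde{F}_{\lambda}(x,u)$ sends $U_{\mu}[0,T]$ into $H[0,T]$, and by Lemma \ref{rconvex} (together with $0\in K_{\mu}$) the set $U_{\mu}[0,T]$ is a nonempty closed convex subset of $H[0,T]$, hence a complete metric space in its own right.

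Next I would fix an arbitrary $\rho>0$ and use the well-known variational characterization of the metric projection onto a closed convex subset of a Hilbert space: for $w\in H[0,T]$ one has $z=P_{U_{\mu}[0,T]}(w)$ if and only if $z\in U_{\mu}[0,T]$ and $\langle w-z,v'-z\rangle_{H[0,T]}\leq0$ for all $v'\in U_{\mu}[0,T]$. Applying this with $w=u-\rho\tilde{F}_{\lambda}(x,u)$ and $z=u$, and using $\rho>0$, shows that $u=P_{U_{\mu}[0,T]}(u-\rho\tilde{F}_{\lambda}(x,u))$ is equivalent to the $H[0,T]$-variational inequality displayed above. Hence it suffices to prove that the map $T_{\rho}(u):=P_{U_{\mu}[0,T]}(u-\rho\tilde{F}_{\lambda}(x,u))$---which maps $U_{\mu}[0,T]$ into itself because the range of $P_{U_{\mu}[0,T]}$ is $U_{\mu}[0,T]$---has a unique fixed point; since the equivalence holds for every $\rho>0$, the resulting $u$ will be independent of $\rho$.

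Then I would verify that $T_{\rho}$ is a contraction for $\rho$ small. Using the nonexpansiveness of $P_{U_{\mu}[0,T]}$, for $u_1,u_2\in U_{\mu}[0,T]$ one has
\begin{align*}
\|T_{\rho}u_1-T_{\rho}u_2\|_{H[0,T]}^2
&\leq\|u_1-u_2\|_{H[0,T]}^2-2\rho\langle\tilde{F}_{\lambda}(x,u_1)-\tilde{F}_{\lambda}(x,u_2),u_1-u_2\rangle_{H[0,T]}\\
&\quad+\rho^2\|\tilde{F}_{\lambda}(x,u_1)-\tilde{F}_{\lambda}(x,u_2)\|_{H[0,T]}^2.
\end{align*}
Invoking the strong monotonicity and the Lipschitz estimate in condition (iii) of Assumption \ref{ass} (taken with $\tilde{x}_1=\tilde{x}_2=x$), the right-hand side is at most $\theta\|u_1-u_2\|_{H[0,T]}^2$ with $\theta:=1-2\rho\bar{C}+\rho^2L_F^2$, and $\theta\in(0,1)$ for each $\rho\in(0,2\bar{C}/L_F^2)$, a nonempty interval since $\bar{C},L_F>0$. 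For such $\rho$, $T_{\rho}$ is a $\sqrt{\theta}$-contraction of the complete metric space $U_{\mu}[0,T]$ into itself, so Banach's fixed point theorem gives a unique $u\in U_{\mu}[0,T]$ with $u=T_{\rho}(u)$. Unwinding the equivalences of the first two steps, this $u$ is the unique solution of the pointwise variational inequality and satisfies $u=P_{U_{\mu}[0,T]}(u-\rho\tilde{F}_{\lambda}(x,u))$ for every $\rho>0$.

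The projection identity and the contraction estimate are routine. The step that genuinely needs care in the stochastic framework---and the one I expect to be the main obstacle---is checking that the Hilbert-space reformulation is legitimate: one must make sure that $\tilde{F}_{\lambda}(x,u)$ is an $\mathcal{F}_t$-adapted, square-integrable process whenever $u\in U_{\mu}[0,T]$ (so that the projection $P_{U_{\mu}[0,T]}$ is applicable) and that ``uniqueness of the fixed point in $H[0,T]$'' coincides with ``uniqueness of $u$ in the a.e.\ sense'' demanded in the statement; both are consequences of the construction of $\tilde{F}_{\lambda}$ in Lemma \ref{requi} and of $H[0,T]$ being an $\mathcal{L}^2$-type space taken modulo a.e.\ equality.
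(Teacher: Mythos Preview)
Your argument is correct and is precisely the classical projection--contraction scheme that underlies the cited result: reformulate via Lemma~\ref{requi} as a variational inequality on the nonempty closed convex set $U_{\mu}[0,T]\subset H[0,T]$, use the variational characterization of $P_{U_{\mu}[0,T]}$ to recast it as a fixed-point equation, and then check that $T_\rho$ is a $\sqrt{1-2\rho\bar C+\rho^2L_F^2}$-contraction for $\rho\in(0,2\bar C/L_F^2)$. The paper does not supply an independent proof of this lemma---it simply invokes \cite{zhang2023s}---and the estimate $1-2\rho\bar C+\rho^2L_F^2$ you obtain is exactly the one used later in \eqref{sta4} and in the constants $\bar M,\bar N,\hat M,\hat N$, so your write-up matches the intended argument.
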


\begin{thm}\label{th1} \cite{zeng2024}(Existence and uniqueness) If conditions (i)-(iii) of Assumption \ref{ass} are satisfied, then for any $\lambda,\mu\in(M,d)$, the following system ${\bf MPS}(\lambda,\mu)$
\begin{align}\label{S3}
 \begin{cases}
 dx_{\lambda,\mu}(t)=b_{\lambda}(t,x_{\lambda,\mu}(t-),u_{\lambda,\mu}(t-))dt+\sigma_{1\lambda}(t,x_{\lambda,\mu}(t-),u_{\lambda,\mu}(t-))(dt)^\alpha\\
 \qquad\qquad\;\mbox{}+\sigma_{\lambda}(t,x_{\lambda,\mu}(t-),u_{\lambda,\mu}(t-))dB(t)\\
 \qquad\qquad\;\mbox{} +\int_{\| y\|<c}G_{\lambda}(t,x_{\lambda,\mu}(t-),u_{\lambda,\mu}(t-),y)\tilde{N}(dt,dy), \; \alpha \in (\frac{1}{2},1),\\
 x_{\lambda,\mu}(0)=p_0,\\
 \langle F_{\lambda}(t,\omega,x_{\lambda,\mu}(t,\omega),u_{\lambda,\mu}(t,\omega)),v-u_{\lambda,\mu}(t,\omega)\rangle\geq0, \; \forall v\in K_{\mu}, \; a.e. \, t\in [0,T], \; a.s. \, \omega\in\Omega
 \end{cases}
\end{align}
admits a unique solution $(x_{\lambda,\mu}(t),u_{\lambda,\mu}(t))\in \mathcal{L}^2_{ad}([0,T]\times \Omega,\mathbb{R}^p)\times U_{\mu}[0,T]$.
\end{thm}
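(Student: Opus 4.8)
The plan is to decouple the system by solving the variational inequality for the $u$-component in terms of the $x$-component, substitute the result back into the stochastic fractional equation with L\'evy jump, and then apply the Banach contraction principle to the ensuing fixed-point problem for $x$ on a Bielecki-weighted version of $\mathcal{L}^2_{ad}([0,T]\times\Omega,\mathbb{R}^p)$. Fix $\lambda,\mu\in(M,d)$. For an arbitrary $x\in\mathcal{L}^2_{ad}([0,T]\times\Omega,\mathbb{R}^p)$, Lemma \ref{requi} turns the pointwise inequality into the single Hilbert-space inequality on $U_\mu[0,T]$, and Lemma \ref{r31} produces a unique $u_x\in U_\mu[0,T]$ with $u_x=P_{U_\mu[0,T]}\bigl(u_x-\rho\tilde F_\lambda(x,u_x)\bigr)$ for a suitable $\rho>0$. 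I would first quantify the dependence of $u_x$ on $x$: writing this fixed-point identity for $x_1$ and $x_2$, using the nonexpansiveness of $P_{U_\mu[0,T]}$ together with the Lipschitz and strong-monotonicity bounds of Assumption \ref{ass}(iii), and choosing $\rho\in(0,2\bar C/L_F^2)$ so that $\theta:=\sqrt{1-2\rho\bar C+\rho^2L_F^2}<1$ (this is where $L_F>\bar C$ is used), one gets
\[
\|u_{x_1}-u_{x_2}\|_{H[0,T]}\le\frac{\rho L_F}{1-\theta}\,\|x_1-x_2\|_{H[0,T]}=:L_u\,\|x_1-x_2\|_{H[0,T]},
\]
and, since $0\in K_\mu$ gives $0\in U_\mu[0,T]$, the same computation yields a linear growth bound $\|u_x\|_{H[0,T]}\le C_0(1+\|x\|_{H[0,T]})$.

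Next, define the operator $\Phi$ on $\mathcal{L}^2_{ad}([0,T]\times\Omega,\mathbb{R}^p)$ by
\begin{align*}
(\Phi x)(t)&=p_0+\int_0^t b_\lambda(s,x(s-),u_x(s-))\,ds+\alpha\int_0^t(t-s)^{\alpha-1}\sigma_{1\lambda}(s,x(s-),u_x(s-))\,ds\\
&\qquad+\int_0^t\sigma_\lambda(s,x(s-),u_x(s-))\,dB(s)+\int_0^t\int_{\|y\|<c}G_\lambda(s,x(s-),u_x(s-),y)\,\tilde N(ds,dy),
\end{align*}
where $u_x$ is as above (the integral form of the equation being the one displayed after Definition \ref{SOL}, via Remark \ref{REF}). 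Since $u_x$ is $\mathcal{F}_t$-adapted by Lemma \ref{r31} and the integrands are adapted, $\Phi x$ is $\mathcal{F}_t$-adapted; and using the growth conditions of Assumption \ref{ass}(i), It\^o's isometry (Lemma \ref{ito}), the L\'evy--It\^o isometry (Remark \ref{levyito}), $\mathbb{E}\|p_0\|^2<\infty$, the growth bound on $u_x$, and $\int_0^t(t-s)^{2(\alpha-1)}\,ds=\frac{t^{2\alpha-1}}{2\alpha-1}<\infty$ (the only place $\alpha>\tfrac12$ enters), one verifies $\sup_{t\in[0,T]}\mathbb{E}\|(\Phi x)(t)\|^2<\infty$, so $\Phi$ maps the space into itself.

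For the contraction estimate, take $x_1,x_2$, subtract, apply $\|\sum_{i=1}^4 a_i\|^2\le 4\sum_i\|a_i\|^2$ and then $\mathbb{E}$, and bound the four terms using Assumption \ref{ass}(ii), the two isometries, and the Cauchy--Schwarz inequality on the weakly singular fractional kernel, absorbing $\mathbb{E}\|u_{x_1}(s)-u_{x_2}(s)\|^2$ through the Step 1 estimate; this gives
\[
\mathbb{E}\|(\Phi x_1)(t)-(\Phi x_2)(t)\|^2\le C\int_0^t\bigl(1+(t-s)^{2(\alpha-1)}\bigr)\,\mathbb{E}\|x_1(s)-x_2(s)\|^2\,ds.
\]
Passing to the equivalent norm $\|x\|_\beta^2:=\sup_{t\in[0,T]}e^{-\beta t}\mathbb{E}\|x(t)\|^2$ and taking $\beta$ large makes $\Phi$ a strict contraction (the integrable singular kernel still produces a factor tending to $0$ as $\beta\to\infty$ by dominated convergence); alternatively one shows directly that some iterate $\Phi^k$ is a contraction via the convolution structure. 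Banach's theorem then gives a unique fixed point $x_{\lambda,\mu}\in\mathcal{L}^2_{ad}([0,T]\times\Omega,\mathbb{R}^p)$; setting $u_{\lambda,\mu}:=u_{x_{\lambda,\mu}}\in U_\mu[0,T]$ and rewriting the integral identity in differential form via Remark \ref{REF} yields a solution of ${\bf MPS}(\lambda,\mu)$. For uniqueness, if $(x^{(1)},u^{(1)})$ and $(x^{(2)},u^{(2)})$ are two solutions, Lemma \ref{r31} forces $u^{(i)}=u_{x^{(i)}}$, so each $x^{(i)}$ is a fixed point of $\Phi$, whence $x^{(1)}=x^{(2)}$ and then $u^{(1)}=u^{(2)}$, both in the almost-everywhere sense.

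The main obstacle is the contraction step: the fractional convolution term carries the weakly singular kernel $(t-s)^{\alpha-1}$, which is square-integrable only because $\alpha>\tfrac12$, so the standard Gronwall/Bielecki trick must be adapted to a singular-kernel integral inequality (a generalized Gronwall lemma, or iteration of $\Phi$), and the $x$--$u$ coupling from Step 1 has to be propagated through every estimate while keeping the Lipschitz constant $L_u$ small enough to be compatible with the contraction — which is precisely where the assumption $L_F>\bar C$ and the freedom in choosing $\rho$ are needed.
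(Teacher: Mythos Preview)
The paper does not give its own proof of this theorem: Theorem~\ref{th1} is stated with the citation \cite{zeng2024} and no proof follows, so there is nothing in this paper to compare your argument against directly. Your strategy --- solve the variational inequality for $u$ in terms of $x$ via Lemma~\ref{r31}, extract a Lipschitz bound for $x\mapsto u_x$, substitute into the integral form of the SDE, and close by a contraction/Gronwall argument on $x$ --- is the natural one and is exactly the mechanism the present paper exploits in the proof of Theorem~\ref{multi} (Steps~1--2 there).

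One point in your write-up needs adjustment. The Lipschitz estimate you derive,
\[
\|u_{x_1}-u_{x_2}\|_{H[0,T]}\le L_u\,\|x_1-x_2\|_{H[0,T]},
\]
is a \emph{global} bound in $H[0,T]$; it does not control $\mathbb{E}\|u_{x_1}(s)-u_{x_2}(s)\|^2$ pointwise in $s$, so it cannot be inserted directly inside $\int_0^t(1+(t-s)^{2(\alpha-1)})(\cdots)\,ds$ as you wrote. The remedy is that the same projection/monotonicity computation works verbatim on $H[0,\tau]$ for every $\tau\in(0,T]$ (the constants $\bar C,L_F$ do not depend on the interval and the pointwise VI restricts to subintervals), yielding
\[
\int_0^\tau\mathbb{E}\|u_{x_1}(s)-u_{x_2}(s)\|^2\,ds\le L_u^2\int_0^\tau\mathbb{E}\|x_1(s)-x_2(s)\|^2\,ds\quad\text{for each }\tau.
\]
This is precisely how the paper passes from \eqref{staI1}--\eqref{staI4} to \eqref{stai1}--\eqref{stai4}. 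Moreover, once you apply Cauchy--Schwarz to the fractional term as you indicate, the singular kernel is replaced by the constant $\alpha^2 T^{2\alpha-1}/(2\alpha-1)$ and you are left with a plain $\int_0^t(\cdots)\,ds$, so the displayed inequality should read $\mathbb{E}\|(\Phi x_1)(t)-(\Phi x_2)(t)\|^2\le C\int_0^t\mathbb{E}\|x_1(s)-x_2(s)\|^2\,ds$ rather than carry the weight $(t-s)^{2(\alpha-1)}$. With these two clarifications a standard Gronwall (or Bielecki) argument closes, and your outline is sound.
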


Theorem \ref{th1} shows the existence and uniqueness of the solution of the system ${\bf MPS}(\lambda,\mu)$. Now we are in the position to present the multi-parameter stability result for ${\bf MPS}(\lambda,\mu)$.

\begin{thm}\label{multi}(Multi-parameter stability) Assume $\lambda_m, \mu_n\in(M,d)$ such that $\lambda_m\rightarrow\lambda$ and $\mu_n\rightarrow\mu$. Suppose  $b_{\lambda_m},b_{\lambda};\sigma_{\lambda_m},\sigma_{\lambda};\sigma_{1\lambda_m},\sigma_{1\lambda};G_{\lambda_m}, G_{\lambda}; F_{\lambda_m}, F_{\lambda};\tilde{F}_{\lambda_m},\tilde{F}_{\lambda_m}$ satisfy Assumption \ref{ass} and $K_{\mu_n}\overset{M}{\rightarrow}K_{\mu}$. Let $(x_{\lambda,\mu}(t),u_{\lambda,\mu}(t))\in \mathcal{L}^2_{ad}([0,T]\times \Omega,\mathbb{R}^p)\times U_{\mu}[0,T]$ be the unique solution of system ${\bf MPS}(\lambda,\mu)$
and $(x_{\lambda_m,\mu_n}(t),u_{\lambda_m,\mu_n}(t))\in \mathcal{L}^2_{ad}([0,T]\times \Omega,\mathbb{R}^p)\times U_{\mu_n}[0,T]$ be the unique solution of system ${\bf MPS}(\lambda_m,\mu_n)$. Then $x_{\lambda_m,\mu_n}(t)\rightarrow x_{\lambda,\mu}(t)$ in $\mathcal{L}^2_{ad}([0,T]\times \Omega,\mathbb{R}^p)$ and $u_{\lambda_m,\mu_n}(t)\rightarrow u_{\lambda,\mu}(t)$ in $H[0,T]$.
\end{thm}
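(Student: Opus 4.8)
The plan is to control the two components $x_{\lambda_m,\mu_n}-x_{\lambda,\mu}$ and $u_{\lambda_m,\mu_n}-u_{\lambda,\mu}$ simultaneously through a coupled system of estimates, using the fixed-point characterization of $u$ from Lemma~\ref{r31} together with the integral representation of $x$ from Remark~\ref{REF}, and closing the argument with a Gronwall-type inequality. First I would write the variational inequalities for both solutions in the equivalent projection form guaranteed by Lemma~\ref{r31}: $u_{\lambda,\mu}=P_{U_{\mu}[0,T]}(u_{\lambda,\mu}-\rho\,\tilde F_{\lambda}(x_{\lambda,\mu},u_{\lambda,\mu}))$ and $u_{\lambda_m,\mu_n}=P_{U_{\mu_n}[0,T]}(u_{\lambda_m,\mu_n}-\rho\,\tilde F_{\lambda_m}(x_{\lambda_m,\mu_n},u_{\lambda_m,\mu_n}))$ for a fixed $\rho>0$ chosen small enough (using $L_F>\bar C$ in Assumption~\ref{ass}(iii)) that the map $w\mapsto P(w-\rho\tilde F(x,w))$ is a contraction on $H[0,T]$ in the $u$-variable with modulus $\theta<1$. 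Inserting the common auxiliary point $P_{U_{\mu_n}[0,T]}(u_{\lambda,\mu}-\rho\,\tilde F_{\lambda}(x_{\lambda,\mu},u_{\lambda,\mu}))$ and using nonexpansiveness of $P_{U_{\mu_n}[0,T]}$, the triangle inequality splits the $u$-error into three pieces: a contraction term $\theta\|u_{\lambda_m,\mu_n}-u_{\lambda,\mu}\|_{H[0,T]}$; a term bounded by $\rho L_F\|x_{\lambda_m,\mu_n}-x_{\lambda,\mu}\|_{H[0,T]}$ plus a data-perturbation term $\rho\|\tilde F_{\lambda_m}(x_{\lambda,\mu},u_{\lambda,\mu})-\tilde F_{\lambda}(x_{\lambda,\mu},u_{\lambda,\mu})\|_{H[0,T]}$ from Assumption~\ref{ass}(iv); and the projection-perturbation term $\|P_{U_{\mu_n}[0,T]}(\xi)-P_{U_{\mu}[0,T]}(\xi)\|_{H[0,T]}$ with $\xi=u_{\lambda,\mu}-\rho\,\tilde F_{\lambda}(x_{\lambda,\mu},u_{\lambda,\mu})$, which tends to $0$ by Theorem~\ref{pro} since $K_{\mu_n}\overset{M}{\rightarrow}K_{\mu}$. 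Absorbing the contraction term yields
\begin{align*}
\|u_{\lambda_m,\mu_n}-u_{\lambda,\mu}\|_{H[0,T]}\le \frac{1}{1-\theta}\Bigl(\rho L_F\|x_{\lambda_m,\mu_n}-x_{\lambda,\mu}\|_{H[0,T]}+\varepsilon_{m,n}^{(1)}\Bigr),
\end{align*}
where $\varepsilon_{m,n}^{(1)}\to0$.

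Next I would estimate the $x$-error. Subtracting the integral representations of $x_{\lambda_m,\mu_n}$ and $x_{\lambda,\mu}$, taking $\sup_{t\le s}$, squaring, and applying the elementary inequality for sums, Doob's inequality (Lemma~\ref{rdoob}), It\^o's isometry (Lemma~\ref{ito}), the L\'evy isometry of Remark~\ref{levyito}, the Cauchy--Schwarz inequality on the fractional convolution term (this is exactly where $\alpha>\tfrac12$ is needed, to make $\int_0^t(t-s)^{2\alpha-2}ds$ finite), and the Lipschitz bounds in Assumption~\ref{ass}(ii), I obtain for every $s\in[0,T]$
\begin{align*}
\mathbb{E}\Bigl(\sup_{r\le s}\|x_{\lambda_m,\mu_n}(r)-x_{\lambda,\mu}(r)\|^2\Bigr)\le C_1\int_0^s\mathbb{E}\Bigl(\sup_{r\le t}\|x_{\lambda_m,\mu_n}(r)-x_{\lambda,\mu}(r)\|^2\Bigr)dt+C_2\|u_{\lambda_m,\mu_n}-u_{\lambda,\mu}\|_{H[0,T]}^2+\varepsilon_{m,n}^{(2)},
\end{align*}
where $\varepsilon_{m,n}^{(2)}\to0$ collects the data-perturbation contributions $\|b_{\lambda_m}-b_{\lambda}\|$, $\|\sigma_{\lambda_m}-\sigma_{\lambda}\|$, $\|\sigma_{1\lambda_m}-\sigma_{1\lambda}\|$, $\|G_{\lambda_m}-G_{\lambda}\|$ evaluated along $(x_{\lambda,\mu},u_{\lambda,\mu})$, all of which vanish by Assumption~\ref{ass}(iv) together with the growth bounds in (i) and the dominated convergence theorem. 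Applying Gronwall's inequality gives $\mathbb{E}\bigl(\sup_{t\le T}\|x_{\lambda_m,\mu_n}(t)-x_{\lambda,\mu}(t)\|^2\bigr)\le C_2 e^{C_1 T}\|u_{\lambda_m,\mu_n}-u_{\lambda,\mu}\|_{H[0,T]}^2+C_3\varepsilon_{m,n}^{(2)}$, and in particular $\|x_{\lambda_m,\mu_n}-x_{\lambda,\mu}\|_{H[0,T]}^2\le T\cdot\bigl(C_2 e^{C_1 T}\|u_{\lambda_m,\mu_n}-u_{\lambda,\mu}\|_{H[0,T]}^2+C_3\varepsilon_{m,n}^{(2)}\bigr)$.

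Finally I would close the loop by substituting the $x$-estimate into the $u$-estimate. This produces $\|u_{\lambda_m,\mu_n}-u_{\lambda,\mu}\|_{H[0,T]}\le A\,\|u_{\lambda_m,\mu_n}-u_{\lambda,\mu}\|_{H[0,T]}+\varepsilon_{m,n}$ with $A=\tfrac{\rho L_F}{1-\theta}\sqrt{T C_2 e^{C_1 T}}$ and $\varepsilon_{m,n}\to0$. The point I have to be careful about---and what I expect to be the main obstacle---is ensuring $A<1$: this requires choosing $\rho$ small at the outset so that the product of the projection-contraction gain $1/(1-\theta)$ (which itself depends on $\rho$) and the Lipschitz amplification of the dynamics stays below one; since $\theta\to\sqrt{1-2\rho\bar C+\rho^2 L_F^2}\to1$ as $\rho\to0$, one must verify that $\tfrac{\rho L_F}{1-\theta}$ stays bounded as $\rho\downarrow0$ (it does, with limit $L_F/\bar C$ by a Taylor expansion), and then the remaining freedom in $\rho$, or a restriction on $T$, or a reweighting of the norm with an exponential factor $e^{-\beta t}$ to shrink $C_2 e^{C_1 T}$, secures $A<1$. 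Once $A<1$, we get $\|u_{\lambda_m,\mu_n}-u_{\lambda,\mu}\|_{H[0,T]}\le \varepsilon_{m,n}/(1-A)\to0$, and feeding this back into the $x$-estimate gives $x_{\lambda_m,\mu_n}\to x_{\lambda,\mu}$ in $\mathcal{L}^2_{ad}([0,T]\times\Omega,\mathbb{R}^p)$, completing the proof.
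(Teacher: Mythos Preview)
Your closing step---substituting the $x$-estimate back into the $u$-estimate and demanding $A<1$---is where the argument breaks. As you yourself compute, $\theta=\sqrt{1-2\rho\bar C+\rho^2 L_F^2}$ gives $\tfrac{\rho L_F}{1-\theta}\to L_F/\bar C$ as $\rho\downarrow 0$, and this limit is \emph{strictly larger than $1$} because Assumption~\ref{ass}(iii) imposes $L_F>\bar C$; so no choice of $\rho$ alone can drive $A$ below $1$, and $T$ is fixed in the statement, so ``restrict $T$'' is not available either. Your reweighting idea with $e^{-\beta t}$ can in fact be carried through, but it is not a free move: you must redo both estimates in the weighted norm, and for the $u$-estimate this requires observing that $\tilde F_\lambda$ acts \emph{pointwise} in $(t,\omega)$ (see Lemma~\ref{requi}) so that the strong monotonicity, the Lipschitz bound, and the nonexpansiveness of $P_{U_{\mu_n}[0,T]}$ all transfer to the weighted inner product with the \emph{same} constants $\bar C,L_F,\theta$. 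Without making this pointwise structure explicit, the reweighting suggestion remains a hope rather than a proof.

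The paper dissolves the circularity more directly, and this is the idea you are missing. Because the stochastic variational inequality is pointwise in $(t,\omega)$, the projection fixed-point identity---and hence your contraction estimate for the $u$-error---holds on every subinterval $[0,\tau]$ with the \emph{same} constants: one gets $\int_0^\tau\mathbb E\|\Delta u(s)\|^2\,ds\le \bar M\int_0^\tau\mathbb E\|\Delta x(s)\|^2\,ds+\bar N\cdot(\text{data on }[0,T])$, where $\Delta u,\Delta x$ denote the respective errors. One then substitutes this \emph{inside} the integrand of the pre-Gronwall bound for $\mathbb E\sup_{r\le \tau}\|\Delta x(r)\|^2$---that is, \emph{before} applying Gronwall---so that the $u$-error is absorbed into the running integral $(1+\bar M)\int_0^\tau\mathbb E\sup_{r\le s}\|\Delta x(r)\|^2\,ds$ rather than surviving as a free-standing global term. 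Gronwall then yields $x$-convergence with no smallness condition whatsoever, and feeding this back into the local $u$-estimate finishes the argument. (The paper additionally routes through the intermediate solution $(x_{\lambda,\mu_n},u_{\lambda,\mu_n})$ to separate the $\lambda$- and $\mu$-perturbations, but that decomposition is a convenience; the essential point is the local-in-time substitution before Gronwall, not after.)
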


\begin{proof}
By Theorem \ref{th1}, the perturbed system ${\bf MPS}(\lambda_m,\mu_n)$ admits unique solution $(x_{\lambda_m,\mu_n}(t),u_{\lambda_m,\mu_n}(t))$ and system ${\bf MPS}(\lambda,\mu)$ has unique solution $(x_{\lambda,\mu}(t),u_{\lambda,\mu}(t))$. In case of no confusion, we will omit $t$ in the later proof and write $x_{\lambda_m,\mu_n}(t)$ as $x_{\lambda_m,\mu_n}$ and $u_{\lambda_m,\mu_n}(t)$ as $u_{\lambda_m,\mu_n}$, respectively.

It is obvious that
 \begin{equation}\label{sta1}
 \|u_{\lambda_m,\mu_n}-u_{\lambda,\mu}\|_{H[0,T]}\leq \|u_{\lambda_m,\mu_n}-u_{\lambda,\mu_n}\|_{H[0,T]}+\|u_{\lambda,\mu_n}-u_{\lambda,\mu}\|_{H[0,T]}
 \end{equation}
and
 \begin{equation}\label{sta1x}
 \|x_{\lambda_m,\mu_n}-x_{\lambda,\mu}\|_{H[0,T]}\leq \|x_{\lambda_m,\mu_n}-x_{\lambda,\mu_n}\|_{H[0,T]}+\|x_{\lambda,\mu_n}-x_{\lambda,\mu}\|_{H[0,T]}.
 \end{equation}

Next, we will show that $u_{\lambda_m,\mu_n}\rightarrow u_{\lambda,\mu_n}\;(\lambda_m\rightarrow\lambda)$ and $u_{\lambda,\mu_n}\rightarrow u_{\lambda,\mu}\;(\mu_n\rightarrow\mu)$. To this end, we have the following six steps.

Step 1: For any fixed $\mu_n\in (M,d)$, we prove that
\begin{align*}
&\mathbb{E}\int_0^T\|u_{\lambda_m,\mu_n}-u_{\lambda,\mu_n}\|^2dt\nonumber\\
\leq& \bar{M}\mathbb{E}\int_0^T\|x_{\lambda_m,\mu_n}-x_{\lambda,\mu_n}\|^2dt+\bar{N}\mathbb{E}\int_0^T\|\tilde{F}_{\lambda}(x_{\lambda_m,\mu_n},u_{\lambda_m,\mu_n})-\tilde{F}_{\lambda_m}(x_{\lambda_m,\mu_n},u_{\lambda_m,\mu_n})\|^2dt,
\end{align*}
where $\bar{M},\;\bar{N}$ are constants. In fact, by Lemma \ref{r31}, we have
\begin{equation}\label{sta2}
u_{\lambda_m,\mu_n}=P_{U_{\mu_n}[0,T]}(u_{\lambda_m,\mu_n}-\rho \tilde{F}_{\lambda_m}(x_{\lambda_m,\mu_n},u_{\lambda_m,\mu_n})),\;\forall \lambda_m,\mu_n\in (M,d),
\end{equation}
 where $0<\rho<\frac{2\bar{C}}{L_F^2}.$ Using \eqref{sta2} and Assumption \ref{ass}, one has
\begin{align}\label{sta3}
&\|u_{\lambda_m,\mu_n}-u_{\lambda,\mu_n}\|_{H[0,T]}\nonumber\\
=&\|P_{U_{\mu_n}[0,T]}(u_{\lambda_m,\mu_n}-\rho \tilde{F}_{\lambda_m}(x_{\lambda_m,\mu_n},u_{\lambda_m,\mu_n}))-P_{U_{\mu_n}[0,T]}(u_{\lambda,\mu_n}-\rho \tilde{F}_{\lambda}(x_{\lambda,\mu_n},u_{\lambda,\mu_n}))\|_{H[0,T]}\nonumber\\
\leq& \|(u_{\lambda_m,\mu_n}-\rho \tilde{F}_{\lambda_m}(x_{\lambda_m,\mu_n},u_{\lambda_m,\mu_n}))-(u_{\lambda,\mu_n}-\rho \tilde{F}_{\lambda}(x_{\lambda,\mu_n},u_{\lambda,\mu_n}))\|_{H[0,T]}\nonumber\\
\leq&\|u_{\lambda_m,\mu_n}-u_{\lambda,\mu_n}+\rho\tilde{F}_{\lambda}(x_{\lambda,\mu_n},u_{\lambda,\mu_n})-\rho\tilde{F}_{\lambda}(x_{\lambda,\mu_n},u_{\lambda_m,\mu_n})+\rho\tilde{F}_{\lambda}(x_{\lambda,\mu_n},u_{\lambda_m,\mu_n})\nonumber\\
&\quad-\rho\tilde{F}_{\lambda}(x_{\lambda_m,\mu_n},u_{\lambda_m,\mu_n})+\rho\tilde{F}_{\lambda}(x_{\lambda_m,\mu_n},u_{\lambda_m,\mu_n})-\rho\tilde{F}_{\lambda_m}(x_{\lambda_m,\mu_n},u_{\lambda_m,\mu_n})\|_{H[0,T]}\nonumber\\
\leq& \|u_{\lambda_m,\mu_n}-u_{\lambda,\mu_n}+\rho\tilde{F}_{\lambda}(x_{\lambda,\mu_n},u_{\lambda,\mu_n})-\rho\tilde{F}_{\lambda}(x_{\lambda,\mu_n},u_{\lambda_m,\mu_n})\|_{H[0,T]}\nonumber\\
&\quad+\rho\|\tilde{F}_{\lambda}(x_{\lambda,\mu_n},u_{\lambda_m,\mu_n})-\tilde{F}_{\lambda}(x_{\lambda_m,\mu_n},u_{\lambda_m,\mu_n})\|_{H[0,T]}\nonumber\\
&\quad+\rho\|\tilde{F}_{\lambda}(x_{\lambda_m,\mu_n},u_{\lambda_m,\mu_n})-\tilde{F}_{\lambda_m}(x_{\lambda_m,\mu_n},u_{\lambda_m,\mu_n})\|_{H[0,T]}\nonumber\\
\leq&\|u_{\lambda_m,\mu_n}-u_{\lambda,\mu_n}+\rho\tilde{F}_{\lambda}(x_{\lambda,\mu_n},u_{\lambda,\mu_n})-\rho\tilde{F}_{\lambda}(x_{\lambda,\mu_n},u_{\lambda_m,\mu_n})\|_{H[0,T]}\nonumber\\
&\quad+\rho L_F\|x_{\lambda_m,\mu_n}-x_{\lambda,\mu_n}\|_{H[0,T]}+\rho\|\tilde{F}_{\lambda}(x_{\lambda_m,\mu_n},u_{\lambda_m,\mu_n})-\tilde{F}_{\lambda_m}(x_{\lambda_m,\mu_n},u_{\lambda_m,\mu_n})\|_{H[0,T]}\nonumber\\
\leq&\sqrt{\|u_{\lambda_m,\mu_n}-u_{\lambda,\mu_n}+\rho\tilde{F}_{\lambda}(x_{\lambda,\mu_n},u_{\lambda,\mu_n})-\rho\tilde{F}_{\lambda}(x_{\lambda,\mu_n},u_{\lambda_m,\mu_n})\|_{H[0,T]}^2}\nonumber\\
&\quad+\rho L_F\|x_{\lambda_m,\mu_n}-x_{\lambda,\mu_n}\|_{H[0,T]}+\rho\|\tilde{F}_{\lambda}(x_{\lambda_m,\mu_n},u_{\lambda_m,\mu_n})
-\tilde{F}_{\lambda_m}(x_{\lambda_m,\mu_n},u_{\lambda_m,\mu_n})\|_{H[0,T]}
\end{align}
and
\begin{align}\label{sta4}
&\|u_{\lambda_m,\mu_n}-u_{\lambda,\mu_n}+\rho\tilde{F}_{\lambda}(x_{\lambda,\mu_n},u_{\lambda,\mu_n})-\rho\tilde{F}_{\lambda}(x_{\lambda,\mu_n},u_{\lambda_m,\mu_n})\|_{H[0,T]}^2\nonumber\\
=& \|u_{\lambda_m,\mu_n}-u_{\lambda,\mu_n}\|^2_{H[0,T]}+\|\rho\tilde{F}_{\lambda}(x_{\lambda,\mu_n},u_{\lambda,\mu_n})-\rho\tilde{F}_{\lambda}(x_{\lambda,\mu_n},u_{\lambda_m,\mu_n})\|_{H[0,T]}^2\nonumber\\
\quad\quad&-2\rho\left\langle\tilde{F}_{\lambda}(x_{\lambda,\mu_n},u_{\lambda_m,\mu_n})-\tilde{F}_{\lambda}(x_{\lambda,\mu_n},u_{\lambda,\mu_n}),u_{\lambda_m,\mu_n}-u_{\lambda,\mu_n}\right\rangle\nonumber\\
\leq& (1+\rho^2L_F^2-2\rho\bar{C})\|u_{\lambda_m,\mu_n}-u_{\lambda,\mu_n}\|^2_{H[0,T]}.
\end{align}
It follows from \eqref{sta3} and \eqref{sta4} that
\begin{align}\label{sta5}
&\|u_{\lambda_m,\mu_n}-u_{\lambda,\mu_n}\|_{H[0,T]}\nonumber\\
\leq& \sqrt{1+\rho^2L_F^2-2\rho\bar{C}}\|u_{\lambda_m,\mu_n}-u_{\lambda,\mu_n}\|_{H[0,T]}+\rho L_F\|x_{\lambda_m,\mu_n}-x_{\lambda,\mu_n}\|_{H[0,T]}\nonumber\\
\quad\quad&+\rho\|\tilde{F}_{\lambda}(x_{\lambda_m,\mu_n},u_{\lambda_m,\mu_n})-\tilde{F}_{\lambda_m}(x_{\lambda_m,\mu_n},u_{\lambda_m,\mu_n})\|_{H[0,T]},
\end{align}
and it leads to
\begin{align}\label{sta6}
&\mathbb{E}\int_0^T\|u_{\lambda_m,\mu_n}-u_{\lambda,\mu_n}\|^2dt\nonumber\\
\leq& \bar{M}\mathbb{E}\int_0^T\|x_{\lambda_m,\mu_n}-x_{\lambda,\mu_n}\|^2dt
+\bar{N}\mathbb{E}\int_0^T\|\tilde{F}_{\lambda}(x_{\lambda_m,\mu_n},u_{\lambda_m,\mu_n})-\tilde{F}_{\lambda_m}(x_{\lambda_m,\mu_n},u_{\lambda_m,\mu_n})\|^2dt,
\end{align}
where
$$
\bar{M}=\frac{2\rho^2L_F^2}{(1-\sqrt{1-2\rho\bar{C}+\rho^2L_F^2})^2}, \quad \bar{N}=\frac{2\rho^2}{(1-\sqrt{1-2\rho\bar{C}+\rho^2L_F^2})^2}.
$$

Step 2: For any fixed $\mu_n\in(M,d)$, one has $x_{\lambda_m,\mu_n}\rightarrow x_{\lambda,\mu_n}\;(\lambda_m\rightarrow\lambda)$. Indeed, according to H\"{o}lder's inequality and the following simple inequality,
\begin{equation}\label{simple}
\left(\left\| \sum_{i=1}^mx_i\right\|\right)^2\leq m\sum_{i=1}^m\| x_i\|^2,
\end{equation}
 we have
 \begin{align}\label{sta7}
 &\mathbb{E}\sup_{t\in[0,\tau]}\|x_{\lambda_m,\mu_n}(t)-x_{\lambda,\mu_n}(t)\|^2\nonumber\\
 \leq& 4\mathbb{E}T\int_0^\tau\| b_{\lambda_m}(s,x_{\lambda_m,\mu_n}(s-),u_{\lambda_m,\mu_n}(s-))-b_{\lambda}(s,x_{\lambda,\mu_n}(s-),u_{\lambda,\mu_n}(s-))\|^2ds\nonumber\\
&\;+4\mathbb{E}\sup_{t\in[0,\tau]}\left(\int_0^t\| \sigma_{\lambda_m}(s,x_{\lambda_m,\mu_n}(s-),u_{\lambda_m,\mu_n}(s-))-\sigma_{\lambda}(s,x_{\lambda,\mu_n}(s-),u_{\lambda,\mu_n}(s-))\| dB(s)\right)^2\nonumber\\
&\;+4\mathbb{E}\sup_{t\in[0,\tau]}\left(\int_0^t\int_{\| y\|<c}\| G_{\lambda_m}(s,x_{\lambda_m,\mu_n}(s-),u_{\lambda_m,\mu_n}(s-),y)-G_{\lambda}(s,x_{\lambda,\mu_n}(s-),u_{\lambda,\mu_n}(s-),y)\|\tilde{N}(ds,dy)\right)^2\nonumber\\
&\; +4\alpha^2\mathbb{E}\sup_{t\in[0,\tau]}\left(\int_0^t(t-s)^{\alpha-1}\|\sigma_{1\lambda_m}(s,x_{\lambda_m,\mu_n}(s-),u_{\lambda_m,\mu_n}(s-))-
\sigma_{1\lambda}(s,x_{\lambda,\mu_n}(s-),u_{\lambda,\mu_n}(s-))\| ds\right)^2\nonumber\\
:=&I_1+I_2+I_3+I_4.
 \end{align}
 It derives from Lemmas \ref{rdoob} and \ref{ito}, Remark \ref{levyito}, H\"{o}lder's inequality, and Assumption \ref{ass} that
 \begin{align}
  \label{staI1}I_1&\leq 8\mathbb{E}T\int_0^\tau\| b_{\lambda_m}(s,x_{\lambda_m,\mu_n}(s-),u_{\lambda_m,\mu_n}(s-))-b_{\lambda}(s,x_{\lambda_m,\mu_n}(s-),u_{\lambda_m,\mu_n}(s-))\|^2ds\nonumber\\
  &\quad\quad +8\mathbb{E}T\int_0^\tau\| b_{\lambda}(s,x_{\lambda_m,\mu_n}(s-),u_{\lambda_m,\mu_n}(s-))-b_{\lambda}(s,x_{\lambda,\mu_n}(s-),u_{\lambda,\mu_n}(s-))\|^2ds\nonumber\\
  &\leq 8TL_{b_{\lambda}}\mathbb{E}\int_0^\tau\| x_{\lambda_m,\mu_n}(s-)-x_{\lambda,\mu_n}(s-)\|^2+\| u_{\lambda_m,\mu_n}(s-)-u_{\lambda,\mu_n}(s-)\|^2ds\nonumber\\
  &\quad\quad+8\mathbb{E}T\int_0^T\| b_{\lambda_m}(s,x_{\lambda_m,\mu_n}(s-),u_{\lambda_m,\mu_n}(s-))-b_{\lambda}(s,x_{\lambda_m,\mu_n}(s-),u_{\lambda_m,\mu_n}(s-))\|^2ds,\\
  \label{staI2}I_2&\leq 32\mathbb{E}\int_0^\tau\| \sigma_{\lambda_m}(s,x_{\lambda_m,\mu_n}(s-),u_{\lambda_m,\mu_n}(s-))-\sigma_{\lambda}(s,x_{\lambda_m,\mu_n}(s-),u_{\lambda_m,\mu_n}(s-))\|^2ds\nonumber\\
  &\quad\quad +32\mathbb{E}\int_0^\tau\| \sigma_{\lambda}(s,x_{\lambda_m,\mu_n}(s-),u_{\lambda_m,\mu_n}(s-))-\sigma_{\lambda}(s,x_{\lambda,\mu_n}(s-),u_{\lambda,\mu_n}(s-))\|^2ds\nonumber\\
  &\leq 32L_{\sigma_{\lambda}}\mathbb{E}\int_0^\tau\| x_{\lambda_m,\mu_n}(s-)-x_{\lambda,\mu_n}(s-)\|^2+\| u_{\lambda_m,\mu_n}(s-)-u_{\lambda,\mu_n}(s-)\|^2ds\nonumber\\
  &\quad\quad +32\mathbb{E}\int_0^T\| \sigma_{\lambda_m}(s,x_{\lambda_m,\mu_n}(s-),u_{\lambda_m,\mu_n}(s-))-\sigma_{\lambda}(s,x_{\lambda_m,\mu_n}(s-),u_{\lambda_m,\mu_n}(s-))\|^2ds,\\
  \label{staI3}I_3&\leq 16\mathbb{E}\int_0^\tau\int_{\| y\|<c}\| G_{\lambda_m}(s,x_{\lambda_m,\mu_n}(s-),u_{\lambda_m,\mu_n}(s-),y)-G_{\lambda}(s,x_{\lambda,\mu_n}(s-),u_{\lambda,\mu_n}(s-),y)\|^2v(dy)ds\nonumber\\
  &\leq 32\mathbb{E}\int_0^\tau\int_{\| y\|<c}\| G_{\lambda_m}(s,x_{\lambda_m,\mu_n}(s-),u_{\lambda_m,\mu_n}(s-),y)-G_{\lambda}(s,x_{\lambda_m,\mu_n}(s-),u_{\lambda_m,\mu_n}(s-),y)\|^2v(dy)ds\nonumber\\
  &\quad\quad+32\mathbb{E}\int_0^\tau\int_{\| y\|<c}\| G_{\lambda}(s,x_{\lambda_m,\mu_n}(s-),u_{\lambda_m,\mu_n}(s-),y)-G_{\lambda}(s,x_{\lambda,\mu_n}(s-),u_{\lambda,\mu_n}(s-),y)\|^2v(dy)ds\nonumber\\
  &\leq 32L_{G_{\lambda}} \mathbb{E}\int_0^\tau\| x_{\lambda_m,\mu_n}(s-)-x_{\lambda,\mu_n}(s-)\|^2+\| u_{\lambda_m,\mu_n}(s-)-u_{\lambda,\mu_n}(s-)\|^2ds\nonumber\\
  &\quad\quad +32\mathbb{E}\int_0^T\int_{\| y\|<c}\| G_{\lambda_m}(s,x_{\lambda_m,\mu_n}(s-),u_{\lambda_m,\mu_n}(s-),y)-G_{\lambda}(s,x_{\lambda_m,\mu_n}(s-),u_{\lambda_m,\mu_n}(s-),y)\|^2v(dy)ds,\\
  \label{staI4}I_4&\leq  4\alpha^2\frac{T^{2\alpha-1}}{2\alpha-1}\mathbb{E}\int_0^\tau\|\sigma_{1\lambda_m}(s,x_{\lambda_m,\mu_n}(s-),u_{\lambda_m,\mu_n}(s-))-
\sigma_{1\lambda}(s,x_{\lambda,\mu_n}(s-),u_{\lambda,\mu_n}(s-))\|^2 ds\nonumber\\
  &\leq 8\alpha^2\frac{T^{2\alpha-1}}{2\alpha-1}L_{\sigma_{1\lambda}}\mathbb{E}\int_0^\tau\| x_{\lambda_m,\mu_n}(s-)-x_{\lambda,\mu_n}(s-)\|^2+\| u_{\lambda_m,\mu_n}(s-)-u_{\lambda,\mu_n}(s-)\|^2ds\nonumber\\
  &\quad\quad+8\alpha^2\frac{T^{2\alpha-1}}{2\alpha-1}\mathbb{E}\int_0^T\|\sigma_{1\lambda_m}(s,x_{\lambda_m,\mu_n}(s-),u_{\lambda_m,\mu_n}(s-))-
\sigma_{1\lambda}(s,x_{\lambda_m,\mu_n}(s-),u_{\lambda_m,\mu_n}(s-))\|^2 ds.
\end{align}
By Step 1, we have
\begin{align}
\label{stai1}I_1&\leq 8TL_{b_{\lambda}}(1+\bar{M})\mathbb{E}\int_0^t\| x_{\lambda_m,\mu_n}(s-)-x_{\lambda,\mu_n}(s-)\|^2ds\nonumber\\
&\quad\quad+8TL_{b_{\lambda}}\bar{N}\mathbb{E}\int_0^T\|\tilde{F}_{\lambda}(x_{\lambda_m,\mu_n},u_{\lambda_m,\mu_n})-\tilde{F}_{\lambda_m}(x_{\lambda_m,\mu_n},u_{\lambda_m,\mu_n})\|^2ds\nonumber\\
&\quad\quad+8\mathbb{E}T\int_0^T\| b_{\lambda_m}(s,x_{\lambda_m,\mu_n}(s-),u_{\lambda_m,\mu_n}(s-))-b_{\lambda}(s,x_{\lambda_m,\mu_n}(s-),u_{\lambda_m,\mu_n}(s-))\|^2ds\nonumber\\
&\leq 8TL_{b_{\lambda}}(1+\bar{M})\int_0^t\mathbb{E}\sup_{\mu\in[0,s]}\| x_{\lambda_m,\mu_n}(\mu)-x_{\lambda,\mu_n}(\mu)\|^2ds\nonumber\\
&\quad\quad+8TL_{b_{\lambda}}\bar{N}\mathbb{E}\int_0^T\|\tilde{F}_{\lambda}(x_{\lambda_m,\mu_n},u_{\lambda_m,\mu_n})-\tilde{F}_{\lambda_m}(x_{\lambda_m,\mu_n},u_{\lambda_m,\mu_n})\|^2ds\nonumber\\
&\quad\quad+8\mathbb{E}T\int_0^T\| b_{\lambda_m}(s,x_{\lambda_m,\mu_n}(s-),u_{\lambda_m,\mu_n}(s-))-b_{\lambda}(s,x_{\lambda_m,\mu_n}(s-),u_{\lambda_m,\mu_n}(s-))\|^2ds,\\
\label{stai2}I_2&\leq 32L_{\sigma_{\lambda}}(1+\bar{M})\mathbb{E}\int_0^t\| x_{\lambda_m,\mu_n}(s-)-x_{\lambda,\mu_n}(s-)\|^2ds\nonumber\\
&\quad\quad+32L_{\sigma_{\lambda}}\bar{N}\mathbb{E}\int_0^T\|\tilde{F}_{\lambda}(x_{\lambda_m,\mu_n},u_{\lambda_m,\mu_n})-\tilde{F}_{\lambda_m}(x_{\lambda_m,\mu_n},u_{\lambda_m,\mu_n})\|^2ds\nonumber\\
&\quad\quad+32\mathbb{E}\int_0^T\| \sigma_{\lambda_m}(s,x_{\lambda_m,\mu_n}(s-),u_{\lambda_m,\mu_n}(s-))-\sigma_{\lambda}(s,x_{\lambda_m,\mu_n}(s-),u_{\lambda_m,\mu_n}(s-))\|^2ds\nonumber\\
&\leq 32L_{\sigma_{\lambda}}(1+\bar{M})\int_0^t\mathbb{E}\sup_{\mu\in[0,s]}\| x_{\lambda_m,\mu_n}(\mu)-x_{\lambda,\mu_n}(\mu)\|^2ds\nonumber\\
&\quad\quad+32L_{\sigma_{\lambda}}\bar{N}\mathbb{E}\int_0^T\|\tilde{F}_{\lambda}(x_{\lambda_m,\mu_n},u_{\lambda_m,\mu_n})-\tilde{F}_{\lambda_m}(x_{\lambda_m,\mu_n},u_{\lambda_m,\mu_n})\|^2ds\nonumber\\
&\quad\quad+32\mathbb{E}\int_0^T\| \sigma_{\lambda_m}(s,x_{\lambda_m,\mu_n}(s-),u_{\lambda_m,\mu_n}(s-))-\sigma_{\lambda}(s,x_{\lambda_m,\mu_n}(s-),u_{\lambda_m,\mu_n}(s-))\|^2ds,\\
\label{stai3}I_3&\leq 32L_{G_{\lambda}}(1+\bar{M})\mathbb{E}\int_0^t\| x_{\lambda_m,\mu_n}(s-)-x_{\lambda,\mu_n}(s-)\|^2ds\nonumber\\
&\quad\quad+32L_{G_{\lambda}}\bar{N}\mathbb{E}\int_0^T\|\tilde{F}_{\lambda}(x_{\lambda_m,\mu_n},u_{\lambda_m,\mu_n})-\tilde{F}_{\lambda_m}(x_{\lambda_m,\mu_n},u_{\lambda_m,\mu_n})\|^2ds\nonumber\\
&\quad\quad+32\mathbb{E}\int_0^T\int_{\| y\|<c}\| G_{\lambda_m}(s,x_{\lambda_m,\mu_n}(s-),u_{\lambda_m,\mu_n}(s-),y)-G_{\lambda}(s,x_{\lambda_m,\mu_n}(s-),u_{\lambda_m,\mu_n}(s-),y)\|^2v(dy)ds\nonumber\\
&\leq 32L_{G_{\lambda}}(1+\bar{M})\int_0^t\mathbb{E}\sup_{\mu\in[0,s]}\| x_{\lambda_m,\mu_n}(\mu)-x_{\lambda,\mu_n}(\mu)\|^2ds\nonumber\\
&\quad\quad+32L_{G_{\lambda}}\bar{N}\mathbb{E}\int_0^T\|\tilde{F}_{\lambda}(x_{\lambda_m,\mu_n},u_{\lambda_m,\mu_n})-\tilde{F}_{\lambda_m}(x_{\lambda_m,\mu_n},u_{\lambda_m,\mu_n})\|^2ds\nonumber\\
&\quad\quad+32\mathbb{E}\int_0^T\int_{\| y\|<c}\| G_{\lambda_m}(s,x_{\lambda_m,\mu_n}(s-),u_{\lambda_m,\mu_n}(s-),y)-G_{\lambda}(s,x_{\lambda_m,\mu_n}(s-),u_{\lambda_m,\mu_n}(s-),y)\|^2v(dy)ds,\\
\label{stai4}I_4&\leq 8\alpha^2\frac{T^{2\alpha-1}}{2\alpha-1}L_{\sigma_{1\lambda}}(1+\bar{M})\mathbb{E}\int_0^t\| x_{\lambda_m,\mu_n}(s-)-x_{\lambda,\mu_n}(s-)\|^2ds\nonumber\\
&\quad\quad+8\alpha^2\frac{T^{2\alpha-1}}{2\alpha-1}L_{\sigma_{1\lambda}}\bar{N}\mathbb{E}\int_0^T\|\tilde{F}_{\lambda}(x_{\lambda_m,\mu_n},u_{\lambda_m,\mu_n})-\tilde{F}_{\lambda_m}(x_{\lambda_m,\mu_n},u_{\lambda_m,\mu_n})\|^2ds\nonumber\\
&\quad\quad+8\alpha^2\frac{T^{2\alpha-1}}{2\alpha-1}\mathbb{E}\int_0^T\|\sigma_{1\lambda_m}(s,x_{\lambda_m,\mu_n}(s-),u_{\lambda_m,\mu_n}(s-))-
\sigma_{1\lambda}(s,x_{\lambda_m,\mu_n}(s-),u_{\lambda_m,\mu_n}(s-))\|^2 ds\nonumber\\
&\leq 8\alpha^2\frac{T^{2\alpha-1}}{2\alpha-1}L_{\sigma_{1\lambda}}(1+\bar{M})\int_0^t\mathbb{E}\sup_{\mu\in[0,s]}\| x_{\lambda_m,\mu_n}(\mu)-x_{\lambda,\mu_n}(\mu)\|^2ds\nonumber\\
&\quad\quad+8\alpha^2\frac{T^{2\alpha-1}}{2\alpha-1}L_{\sigma_{1\lambda}}\bar{N}\mathbb{E}\int_0^T\|\tilde{F}_{\lambda}(x_{\lambda_m,\mu_n},u_{\lambda_m,\mu_n})-\tilde{F}_{\lambda_m}(x_{\lambda_m,\mu_n},u_{\lambda_m,\mu_n})\|^2ds\nonumber\\
&\quad\quad+8\alpha^2\frac{T^{2\alpha-1}}{2\alpha-1}\mathbb{E}\int_0^T\|\sigma_{1\lambda_m}(s,x_{\lambda_m,\mu_n}(s-),u_{\lambda_m,\mu_n}(s-))-
\sigma_{1\lambda}(s,x_{\lambda_m,\mu_n}(s-),u_{\lambda_m,\mu_n}(s-))\|^2 ds.
\end{align}
It follows form \eqref{sta7}, \eqref{stai1}-\eqref{stai4} and Gronwall's inequality that
\begin{equation}\label{sta8}
\mathbb{E}\sup_{t\in[0,\tau]}\|x_{\lambda_m,\mu_n}(t)-x_{\lambda,\mu_n}(t)\|^2
\leq A(\lambda_m)\exp\left(B(0)T\right),
\end{equation}
where
\begin{align*}
&A(\lambda_m)\nonumber\\
=&\left(8TL_{b_{\lambda}}+32L_{\sigma_{\lambda}}+32L_{G_{\lambda}}+8\alpha^2\frac{T^{2\alpha-1}}{2\alpha-1}L_{\sigma_{1\lambda}}\right)\bar{N}\mathbb{E}\int_0^T\|\tilde{F}_{\lambda}(x_{\lambda_m,\mu_n},u_{\lambda_m,\mu_n})-\tilde{F}_{\lambda_m}(x_{\lambda_m,\mu_n},u_{\lambda_m,\mu_n})\|^2ds\nonumber\\
&\quad+8\mathbb{E}T\int_0^T\| b_{\lambda_m}(s,x_{\lambda_m,\mu_n}(s-),u_{\lambda_m,\mu_n}(s-))-b_{\lambda}(s,x_{\lambda_m,\mu_n}(s-),u_{\lambda_m,\mu_n}(s-))\|^2ds\nonumber\\
&\quad+32\mathbb{E}\int_0^T\| \sigma_{\lambda_m}(s,x_{\lambda_m,\mu_n}(s-),u_{\lambda_m,\mu_n}(s-))-\sigma_{\lambda}(s,x_{\lambda_m,\mu_n}(s-),u_{\lambda_m,\mu_n}(s-))\|^2ds\nonumber\\
&\quad+32\mathbb{E}\int_0^T\int_{\| y\|<c}\| G_{\lambda_m}(s,x_{\lambda_m,\mu_n}(s-),u_{\lambda_m,\mu_n}(s-),y)-G_{\lambda}(s,x_{\lambda_m,\mu_n}(s-),u_{\lambda_m,\mu_n}(s-),y)\|^2v(dy)ds\nonumber\\
&\quad+8\alpha^2\frac{T^{2\alpha-1}}{2\alpha-1}\mathbb{E}\int_0^T\|\sigma_{1\lambda_m}(s,x_{\lambda_m,\mu_n}(s-),u_{\lambda_m,\mu_n}(s-))-
\sigma_{1\lambda}(s,x_{\lambda_m,\mu_n}(s-),u_{\lambda_m,\mu_n}(s-))\|^2 ds
\end{align*}
and
$$
B(0)=\left(8TL_{b_{\lambda}}+32L_{\sigma_{\lambda}}+32L_{G_{\lambda}}
+8\alpha^2\frac{T^{2\alpha-1}}{2\alpha-1}L_{\sigma_{1\lambda}}\right)(1+\bar{M}).
$$
Thus, the condition (iv) of Assumption \ref{ass} shows that $\lim_{\lambda_m\rightarrow\lambda}A(\lambda_m)=0$ and so it can be inferred that $x_{\lambda_m,\mu_n}\rightarrow x_{\lambda,\mu_n}\;(\lambda_m\rightarrow\lambda)$.

Step 3: We claim that, for any fixed $\mu_n\in(M,d)$, $u_{\lambda_m,\mu_n}\rightarrow u_{\lambda,\mu_n}\;(\lambda_m\rightarrow\lambda)$. In fact, by Step 1, Step 2 and the condition (iv) of Assumption \ref{ass}, the claim is clearly true.

Step 4: We show that
\begin{align}\label{sta12}
&\mathbb{E}\int_0^T\|u_{\lambda,\mu_n}-u_{\lambda,\mu}\|^2dt\leq  \hat{M}\mathbb{E}\int_0^T\|x_{\lambda,\mu_n}-x_{\lambda,\mu}\|^2dt\nonumber\\
&\qquad \mbox{} +\hat{N}\mathbb{E}\int_0^T\|P_{U_{\mu_n}[0,T]}(u_{\lambda,\mu}-\rho \tilde{F}_{\lambda}(x_{\lambda,\mu},u_{\lambda,\mu}))-P_{U_{\mu}[0,T]}(u_{\lambda,\mu}-\rho \tilde{F}_{\lambda}(x_{\lambda,\mu},u_{\lambda,\mu}))\|^2dt,
\end{align}
where $$
\hat{M}=\frac{2\rho^2L_F^2}{(1-\sqrt{1-2\rho\bar{C}+\rho^2L_F^2})^2}, \hat{N}=\frac{2}{(1-\sqrt{1-2\rho\bar{C}+\rho^2L_F^2})^2}.
$$
Indeed, similar to \eqref{sta3} and \eqref{sta4}, one has
\begin{align}\label{sta11}
& \|u_{\lambda,\mu_n}-u_{\lambda,\mu}\|_{H[0,T]}\nonumber\\
=&\|P_{U_{\mu_n}[0,T]}(u_{\lambda,\mu_n}-\rho \tilde{F}_{\lambda}(x_{\lambda,\mu_n},u_{\lambda,\mu_n}))-P_{U_{\mu}[0,T]}(u_{\lambda,\mu}-\rho \tilde{F}_{\lambda}(x_{\lambda,\mu},u_{\lambda,\mu}))\|_{H[0,T]}\nonumber\\
\leq & \|P_{U_{\mu_n}[0,T]}(u_{\lambda,\mu_n}-\rho \tilde{F}_{\lambda}(x_{\lambda,\mu_n},u_{\lambda,\mu_n}))-P_{U_{\mu_n}[0,T]}(u_{\lambda,\mu}-\rho \tilde{F}_{\lambda}(x_{\lambda,\mu},u_{\lambda,\mu}))\|_{H[0,T]}\nonumber\\
&\quad\quad \mbox{}+\|P_{U_{\mu_n}[0,T]}(u_{\lambda,\mu}-\rho \tilde{F}_{\lambda}(x_{\lambda,\mu},u_{\lambda,\mu}))-P_{U_{\mu}[0,T]}(u_{\lambda,\mu}-\rho \tilde{F}_{\lambda}(x_{\lambda,\mu},u_{\lambda,\mu}))\|_{H[0,T]}\nonumber\\
\leq & \|u_{\lambda,\mu_n}-u_{\lambda,\mu}-\rho \tilde{F}_{\lambda}(x_{\lambda,\mu_n},u_{\lambda,\mu_n})+\rho \tilde{F}_{\lambda}(x_{\lambda,\mu},u_{\lambda,\mu})\|_{H[0,T]}\nonumber\\
&\quad\quad\mbox{} +\|P_{U_{\mu_n}[0,T]}(u_{\lambda,\mu}-\rho \tilde{F}_{\lambda}(x_{\lambda,\mu},u_{\lambda,\mu}))-P_{U_{\mu}[0,T]}(u_{\lambda,\mu}-\rho \tilde{F}_{\lambda}(x_{\lambda,\mu},u_{\lambda,\mu}))\|_{H[0,T]}\nonumber\\
\leq& \|u_{\lambda,\mu_n}-u_{\lambda,\mu}-\rho \tilde{F}_{\lambda}(x_{\lambda,\mu_n},u_{\lambda,\mu_n})+\rho \tilde{F}_{\lambda}(x_{\lambda,\mu_n},u_{\lambda,\mu})\|_{H[0,T]}\nonumber\\
&\quad\quad\mbox{} +\|-\rho \tilde{F}_{\lambda}(x_{\lambda,\mu_n},u_{\lambda,\mu_n})+\rho \tilde{F}_{\lambda}(x_{\lambda,\mu},u_{\lambda,\mu})\|_{H[0,T]}\nonumber\\
&\quad\quad\mbox{} +\|P_{U_{\mu_n}[0,T]}(u_{\lambda,\mu}-\rho \tilde{F}_{\lambda}(x_{\lambda,\mu},u_{\lambda,\mu}))-P_{U_{\mu}[0,T]}(u_{\lambda,\mu}-\rho \tilde{F}_{\lambda}(x_{\lambda,\mu},u_{\lambda,\mu}))\|_{H[0,T]}\nonumber\\
\leq& \sqrt{(1+\rho^2L_F^2-2\rho\bar{C})}\|u_{\lambda,\mu_n}-u_{\lambda,\mu}\|_{H[0,T]}+\rho L_F\|x_{\lambda,\mu_n}-x_{\lambda,\mu}\|_{H[0,T]}\nonumber\\
&\quad\quad\mbox{} +\|P_{U_{\mu_n}[0,T]}(u_{\lambda,\mu}-\rho \tilde{F}_{\lambda}(x_{\lambda,\mu},u_{\lambda,\mu}))-P_{U_{\mu}[0,T]}(u_{\lambda,\mu}-\rho \tilde{F}_{\lambda}(x_{\lambda,\mu},u_{\lambda,\mu}))\|_{H[0,T]}.
\end{align}
It follows that \eqref{sta12} holds.

Step 5: We claim that $x_{\lambda,\mu_n}\rightarrow x_{\lambda,\mu}\;(\mu_n\rightarrow\mu)$. In fact, it derives from \eqref{simple} and H\"{o}lder's inequality that
\begin{align}\label{sta13}
&\mathbb{E}\sup_{t\in[0,\tau]}\|x_{\lambda,\mu_n}(t)-x_{\lambda,\mu}(t)\|^2\nonumber\\
\leq& 4\mathbb{E}T\int_0^\tau\| b_{\lambda}(s,x_{\lambda,\mu_n}(s-),u_{\lambda,\mu_n}(s-))-b_{\lambda}(s,x_{\lambda,\mu}(s-),u_{\lambda,\mu}(s-))\|^2ds\nonumber\\
&\quad +4\mathbb{E}\sup_{t\in[0,\tau]}\left(\int_0^t\| \sigma_{\lambda}(s,x_{\lambda,\mu_n}(s-),u_{\lambda,\mu_n}(s-))-\sigma_{\lambda}(s,x_{\lambda,\mu}(s-),u_{\lambda,\mu}(s-))\| dB(s)\right)^2\nonumber\\
&\quad +4\mathbb{E}\sup_{t\in[0,\tau]}\left(\int_0^t\int_{\| y\|<c}\| G_{\lambda}(s,x_{\lambda,\mu_n}(s-),u_{\lambda,\mu_n}(s-),y)-G_{\lambda}(s,x_{\lambda,\mu}(s-),u_{\lambda,\mu}(s-),y)\|\tilde{N}(ds,dy)\right)^2\nonumber\\
&\quad +4\alpha^2\mathbb{E}\sup_{t\in[0,\tau]}\left(\int_0^t(t-s)^{\alpha-1}\|\sigma_{1\lambda}(s,x_{\lambda,\mu_n}(s-),u_{\lambda,\mu_n}(s-))-
\sigma_{1\lambda}(s,x_{\lambda,\mu}(s-),u_{\lambda,\mu}(s-))\| ds\right)^2\nonumber\\
&:=J_1+J_2+J_3+J_4.
\end{align}
According to Lemma \ref{rdoob}, Lemma \ref{ito}, Remark \ref{levyito}, H\"{o}lder inequality, Step 4 and Assumption \ref{ass}, one has
\begin{align}
  \label{staJ1}J_1&\leq 4TL_{b_{\lambda}}(1+\hat{M})\mathbb{E}\int_0^t\| x_{\lambda,\mu_n}(s-)-x_{\lambda,\mu}(s-)\|^2ds\nonumber\\
  &\quad\quad+4TL_{b_{\lambda}}\hat{N}\mathbb{E}\int_0^T\|P_{U_{\mu_n}[0,T]}(u_{\lambda,\mu}-\rho \tilde{F}_{\lambda}(x_{\lambda,\mu},u_{\lambda,\mu}))-P_{U_{\mu}[0,T]}(u_{\lambda,\mu}-\rho \tilde{F}_{\lambda}(x_{\lambda,\mu},u_{\lambda,\mu}))\|^2ds,\\
  \label{staJ2}J_2&\leq 16\mathbb{E}\int_0^\tau\| \sigma_{\lambda}(s,x_{\lambda,\mu_n}(s-),u_{\lambda,\mu_n}(s-))-\sigma_{\lambda}(s,x_{\lambda,\mu}(s-),u_{\lambda,\mu}(s-))\|^2ds\nonumber\\
  &\leq 16L_{\sigma_{\lambda}}(1+\hat{M})\mathbb{E}\int_0^\tau\| x_{\lambda,\mu_n}(s-)-x_{\lambda,\mu}(s-)\|^2ds\nonumber\\
  &\quad\quad+16L_{\sigma_{\lambda}}\hat{N}\mathbb{E}\int_0^T\|P_{U_{\mu_n}[0,T]}(u_{\lambda,\mu}-\rho \tilde{F}_{\lambda}(x_{\lambda,\mu},u_{\lambda,\mu}))-P_{U_{\mu}[0,T]}(u_{\lambda,\mu}-\rho \tilde{F}_{\lambda}(x_{\lambda,\mu},u_{\lambda,\mu}))\|^2ds,\\
  \label{staJ3}J_3&\leq 16\mathbb{E}\int_0^\tau\int_{\| y\|<c}\| G_{\lambda}(s,x_{\lambda,\mu_n}(s-),u_{\lambda,\mu_n}(s-),y)-G_{\lambda}(s,x_{\lambda,\mu}(s-),u_{\lambda,\mu}(s-),y)\|^2v(dy)ds\nonumber\\
  &\leq 16L_{G_{\lambda}}(1+\hat{M}) \mathbb{E}\int_0^\tau\| x_{\lambda,\mu_n}(s-)-x_{\lambda,\mu}(s-)\|^2ds\nonumber\\
  &\quad\quad +16L_{G_{\lambda}}\hat{N}\mathbb{E}\int_0^T\|P_{U_{\mu_n}[0,T]}(u_{\lambda,\mu}-\rho \tilde{F}_{\lambda}(x_{\lambda,\mu},u_{\lambda,\mu}))-P_{U_{\mu}[0,T]}(u_{\lambda,\mu}-\rho \tilde{F}_{\lambda}(x_{\lambda,\mu},u_{\lambda,\mu}))\|^2ds,\\
  \label{staJ4}J_4&\leq  4\alpha^2\frac{T^{2\alpha-1}}{2\alpha-1}\mathbb{E}\int_0^\tau\|\sigma_{1\lambda}(s,x_{\lambda,\mu_n}(s-),u_{\lambda,\mu_n}(s-))-
\sigma_{1\lambda}(s,x_{\lambda,\mu}(s-),u_{\lambda,\mu}(s-))\|^2 ds\nonumber\\
  &\leq 4\alpha^2\frac{T^{2\alpha-1}}{2\alpha-1}L_{\sigma_{1\lambda}}(1+\hat{M})\mathbb{E}\int_0^\tau\| x_{\lambda,\mu_n}(s-)-x_{\lambda,\mu}(s-)\|^2ds\nonumber\\
  &\quad\quad+4\alpha^2\frac{T^{2\alpha-1}}{2\alpha-1}L_{\sigma_{1\lambda}}\hat{N}\mathbb{E}\int_0^T\|P_{U_{\mu_n}[0,T]}(u_{\lambda,\mu}-\rho \tilde{F}_{\lambda}(x_{\lambda,\mu},u_{\lambda,\mu}))-P_{U_{\mu}[0,T]}(u_{\lambda,\mu}-\rho \tilde{F}_{\lambda}(x_{\lambda,\mu},u_{\lambda,\mu}))\|^2ds.
\end{align}
It follows from \eqref{sta13}-\eqref{staJ4} and Gronwall's inequality that
\begin{equation*}\label{sta14}
\mathbb{E}\sup_{t\in[0,\tau]}\|x_{\lambda,\mu_n}(t)-x_{\lambda,\mu}(t)\|^2
\leq Z(n)\exp\left(D(0)T\right),
\end{equation*}
where
$$
\begin{cases}
Z(\mu_n)=\bar{Z}\hat{N}\mathbb{E}\int_0^T\|P_{U_{\mu_n}[0,T]}(u_{\lambda,\mu}-\rho \tilde{F}_{\lambda}(x_{\lambda,\mu},u_{\lambda,\mu}))-P_{U_{\mu}[0,T]}(u_{\lambda,\mu}-\rho \tilde{F}_{\lambda}(x_{\lambda,\mu},u_{\lambda,\mu}))\|^2ds,\\
B(0)=\bar{Z}(1+\bar{M}),\\
\bar{Z}=4TL_{b_{\lambda}}+16L_{\sigma_{\lambda}}+16L_{G_{\lambda}}+4\alpha^2\frac{T^{2\alpha-1}}{2\alpha-1}L_{\sigma_{1\lambda}}.
\end{cases}
$$
Moreover, Theorem \ref{pro} shows that $\lim_{\mu_n\rightarrow\mu}Z(\mu_n)=0$. Therefore, it can be inferred that $x_{\lambda,\mu_n}\rightarrow x_{\lambda,\mu}\;(\mu_n\rightarrow\mu)$.

Step 6: We claim that $u_{\lambda,\mu_n}\rightarrow u_{\lambda,\mu}\;(\mu_n\rightarrow\mu)$. Indeed, it can be easily verified by Steps 4 and 5, and Theorem \ref{pro}.

Finally, according to Steps 1-6 and \eqref{sta1}-\eqref{sta1x}, we have $x_{\lambda_m,\mu_n}\rightarrow x_{\lambda,\mu}\;(\lambda_m\rightarrow\lambda,\mu_n\rightarrow\mu)$ in $\mathcal{L}^2_{ad}([0,T]\times \Omega,\mathbb{R}^p)$ and $u_{\lambda_m,\mu_n}\rightarrow u_{\lambda,\mu}\;(\lambda_m\rightarrow\lambda,\mu_n\rightarrow\mu)$ in $H[0,T]$. That complete our proof.
\end{proof}

\begin{remark}\label{zhang}
Clearly, if $\sigma_{1\lambda}\equiv 0$ and $G_{\lambda}\equiv 0$, then the system \eqref{S5} reduces to the following multi-parameter system associated with the system \eqref{S1}:
\begin{align}\label{AS1}
\begin{cases}
 dx_{\lambda,\mu}(t)=b_{\lambda}(t,x_{\lambda,\mu}(t),u_{\lambda,\mu}(t))dt+\sigma_{\lambda}(t,x(t),u(t))dB(t), \quad
 x_{\lambda,\mu}(0)=p_0,\\
 \langle F_{\lambda}(t,\omega,x_{\lambda,\mu}(t,\omega),u_{\lambda,\mu}(t,\omega)),v-u_{\lambda,\mu}(t,\omega)\rangle\geq0, \; \forall v\in K_{\mu}, \; a.e. \, t\in [0,T], \; a.s. \, \omega\in\Omega.
 \end{cases}
\end{align}
and our stability result is still new to SDVI proposed in \cite{zhang2023}. Moreover, if $\sigma_{1\lambda}\equiv 0$, $G_{\lambda}\equiv 0$ and the constraint set $K_{\mu}\equiv K$, where $K$ is a fixed closed convex subset of $\mathbb{R}^q$, then Theorem \ref{multi} is consistent with Theorem 4.1 in \cite{zhang2023}.
\end{remark}

From equivalence of the stochastic fractional differential variational inequality with L\'{e}vy jump and stochastic fractional differential complementarity problem with L\'{e}vy jump, we can obtain the following unique solvability  for the stochastic fractional differential complementarity problem with L\'{e}vy jump by using Theorem \ref{th1}.
\begin{cor}
If conditions (i)-(iii) of Assumption \ref{ass} are satisfied and $K$ is a closed convex cone in $\mathbb{R}^q$ and $K^*$ is the dual cone of $K$ then for any $\lambda,\mu\in(M,d)$, the following system ${\bf SFDCP}(\lambda,\mu)$
\begin{align*}
  \begin{cases}
 dx_{\lambda,\mu}(t)=b_{\lambda}(t,x_{\lambda,\mu}(t-),u_{\lambda,\mu}(t-))dt+\sigma_{1\lambda}(t,x_{\lambda,\mu}(t-),u_{\lambda,\mu}(t-))(dt)^\alpha\\
 \qquad\qquad\;\mbox{}+\sigma_{\lambda}(t,x_{\lambda,\mu}(t-),u_{\lambda,\mu}(t-))dB(t)\\
 \qquad\qquad\;\mbox{} +\int_{\| y\|<c}G_{\lambda}(t,x_{\lambda,\mu}(t-),u_{\lambda,\mu}(t-),y)\tilde{N}(dt,dy), \; \alpha \in (\frac{1}{2},1),\\
 x_{\lambda,\mu}(0)=p_0,\\
K_{\mu}\ni u_{\lambda,\mu}(t,\omega)\;\bot\;F_{\lambda}(t,\omega,x_{\lambda,\mu}(t,\omega),u_{\lambda,\mu}(t,\omega))\in K_{\mu}^*, \; a.e. \, t\in [0,T], \; a.s. \, \omega\in\Omega,
 \end{cases}
  \end{align*}
admits a unique solution $(x_{\lambda,\mu}(t),u_{\lambda,\mu}(t))\in \mathcal{L}^2_{ad}([0,T]\times \Omega,\mathbb{R}^p)\times U_{\mu}[0,T]$.
\end{cor}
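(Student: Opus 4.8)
The plan is to obtain the corollary as an immediate consequence of Theorem \ref{th1} together with the classical equivalence, for a closed convex cone, between a variational inequality and its associated complementarity problem. First I would record the elementary pointwise fact (see, e.g., \cite{Facchinei2003}): if $K_{\mu}\subset\mathbb{R}^q$ is a closed convex cone with dual cone $K_{\mu}^{*}$, then for any $(t,\omega)$ and any $a\in\mathbb{R}^q$, a vector $u\in K_{\mu}$ satisfies $\langle a,v-u\rangle\geq 0$ for all $v\in K_{\mu}$ if and only if $a\in K_{\mu}^{*}$ and $\langle a,u\rangle=0$, i.e. $K_{\mu}\ni u\bot a\in K_{\mu}^{*}$. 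Indeed, choosing $v=0$ and $v=2u$ in the inequality forces $\langle a,u\rangle=0$, whence $\langle a,v\rangle\geq 0$ for all $v\in K_{\mu}$, that is $a\in K_{\mu}^{*}$; the reverse implication is trivial. Note also that a closed convex cone automatically satisfies $0\in K_{\mu}$, so $U_{\mu}[0,T]$ is well defined as in Notations \ref{notation}.

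Next I would apply this equivalence with $a=F_{\lambda}(t,\omega,x_{\lambda,\mu}(t,\omega),u_{\lambda,\mu}(t,\omega))$ for a.e. $t\in[0,T]$ and a.s. $\omega\in\Omega$. This shows that, for a given $x_{\lambda,\mu}\in\mathcal{L}^2_{ad}([0,T]\times\Omega,\mathbb{R}^p)$ and $u_{\lambda,\mu}\in U_{\mu}[0,T]$, the stochastic variational inequality line appearing in ${\bf MPS}(\lambda,\mu)$ holds if and only if the complementarity line $K_{\mu}\ni u_{\lambda,\mu}(t,\omega)\bot F_{\lambda}(t,\omega,x_{\lambda,\mu}(t,\omega),u_{\lambda,\mu}(t,\omega))\in K_{\mu}^{*}$ holds for a.e. $t\in[0,T]$ and a.s. $\omega\in\Omega$. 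Since the stochastic fractional differential equation with L\'{e}vy jump governing $x_{\lambda,\mu}$, together with the initial condition $x_{\lambda,\mu}(0)=p_0$, is literally identical in the two systems, it follows that a pair $(x_{\lambda,\mu}(t),u_{\lambda,\mu}(t))\in\mathcal{L}^2_{ad}([0,T]\times\Omega,\mathbb{R}^p)\times U_{\mu}[0,T]$ solves ${\bf SFDCP}(\lambda,\mu)$ if and only if it solves ${\bf MPS}(\lambda,\mu)$.

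Finally, under conditions (i)--(iii) of Assumption \ref{ass}, Theorem \ref{th1} asserts that ${\bf MPS}(\lambda,\mu)$ has a unique solution in $\mathcal{L}^2_{ad}([0,T]\times\Omega,\mathbb{R}^p)\times U_{\mu}[0,T]$ for every $\lambda,\mu\in(M,d)$; by the equivalence just established, that same pair is the unique solution of ${\bf SFDCP}(\lambda,\mu)$, which proves the corollary. The only point requiring a little care is the compatibility of the pointwise cone--complementarity equivalence with the ``a.e. $t$, a.s. $\omega$'' formulation, i.e. that measurability and $\mathcal{F}_t$-adaptedness are preserved when passing between the two forms; this, however, is already handled by the lifting to $H[0,T]$ in Lemma \ref{requi} and Definition \ref{SOL}, since both the variational inequality line and the complementarity line are equivalent, pointwise in $(t,\omega)$, to the single Hilbert-space variational inequality $\langle\tilde{F}_{\lambda}(x_{\lambda,\mu},u_{\lambda,\mu}),v'-u_{\lambda,\mu}\rangle_{H[0,T]}\geq 0$ for all $v'\in U_{\mu}[0,T]$. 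Hence the main obstacle is essentially bookkeeping rather than a substantive difficulty.
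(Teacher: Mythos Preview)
Your proposal is correct and follows exactly the approach the paper takes: the corollary is stated immediately after the sentence ``From equivalence of the stochastic fractional differential variational inequality with L\'{e}vy jump and stochastic fractional differential complementarity problem with L\'{e}vy jump, we can obtain the following unique solvability \ldots\ by using Theorem \ref{th1},'' and no further proof is given. Your write-up simply fleshes out this one-line justification by making the pointwise VI/CP equivalence on a closed convex cone explicit and then invoking Theorem \ref{th1}, which is precisely what the paper intends.
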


We also can obtain the stability of solutions for the stochastic fractional differential complementarity problems with L\'{e}vy jump by employing Theorem \ref{multi}.
\begin{cor}
Assume $K$ is a closed convex cone in $\mathbb{R}^q$ and $K^*$ is the dual cone of $K$ and $\lambda_m, \mu_n\in(M,d)$ such that $\lambda_m\rightarrow\lambda$ and $\mu_n\rightarrow\mu$.
  Suppose $b_{\lambda_m},b_{\lambda};\sigma_{\lambda_m},\sigma_{\lambda};\sigma_{1\lambda_m},\sigma_{1\lambda};$ $G_{\lambda_m},G_{\lambda};F_{\lambda_m},F_{\lambda};\tilde{F}_{\lambda_m},\tilde{F}_{\lambda_m}$ satisfy the similar conditions in Assumption \ref{ass}, and $K_{\mu_n}\overset{M}{\rightarrow}K_{\mu}$. Moreover, let $(x_{\lambda,\mu}(t),u_{\lambda,\mu}(t))\in \mathcal{L}^2_{ad}([0,T]\times \Omega,\mathbb{R}^p)\times U_{\mu}[0,T]$ and $(x_{\lambda_m,\mu_n}(t),u_{\lambda_m,\mu_n}(t))\in \mathcal{L}^2_{ad}([0,T]\times \Omega,\mathbb{R}^p)\times U_{\mu_n}[0,T]$  be the unique solution of ${\bf SFDCP}(\lambda,\mu) $ and ${\bf SFDCP}(\lambda_m,\mu_n) $,  respectively. Then $x_{\lambda_m,\mu_n}(t)\rightarrow x_{\lambda,\mu}(t)$ in $\mathcal{L}^2_{ad}([0,T]\times \Omega,\mathbb{R}^p)$ and $u_{\lambda_m,\mu_n}(t)\rightarrow u_{\lambda,\mu}(t)$ in $H[0,T]$.
\end{cor}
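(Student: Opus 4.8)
The plan is to obtain this corollary as an immediate consequence of Theorem \ref{multi}, after rewriting the complementarity formulation as a variational inequality. The device is the classical equivalence, valid over a closed convex cone, between a complementarity problem and the associated variational inequality, applied pointwise in the time--sample variable $(t,\omega)$.

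First I would establish the following pointwise equivalence. Fix $\lambda,\mu\in(M,d)$ and $x_{\lambda,\mu}\in\mathcal{L}^2_{ad}([0,T]\times\Omega,\mathbb{R}^p)$, and abbreviate $\xi=F_{\lambda}(t,\omega,x_{\lambda,\mu}(t,\omega),u_{\lambda,\mu}(t,\omega))$ and $u=u_{\lambda,\mu}(t,\omega)$. Since $K_{\mu}$ is a closed convex cone, $0\in K_{\mu}$ and $2u\in K_{\mu}$ whenever $u\in K_{\mu}$. If $\langle\xi,v-u\rangle\ge0$ for all $v\in K_{\mu}$, then taking $v=0$ gives $\langle\xi,u\rangle\le0$ and taking $v=2u$ gives $\langle\xi,u\rangle\ge0$, hence $\langle\xi,u\rangle=0$; therefore $\langle\xi,v\rangle\ge0$ for all $v\in K_{\mu}$, i.e. $\xi\in K_{\mu}^{*}$, and the complementarity relation $K_{\mu}\ni u\bot\xi\in K_{\mu}^{*}$ holds. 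Conversely, if $u\in K_{\mu}$, $\xi\in K_{\mu}^{*}$ and $\langle\xi,u\rangle=0$, then $\langle\xi,v-u\rangle=\langle\xi,v\rangle-\langle\xi,u\rangle\ge0$ for every $v\in K_{\mu}$. As this reasoning is entirely pointwise in $(t,\omega)$, it transfers verbatim to the ``$a.e.\,t\in[0,T],\,a.s.\,\omega\in\Omega$'' formulation without any extra measurability cost; alternatively one could phrase it at the lifted Hilbert-space level via $U_{\mu}[0,T]$ and Lemma \ref{requi}.

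Consequently the differential complementarity system ${\bf SFDCP}(\lambda,\mu)$ coincides, line by line, with ${\bf MPS}(\lambda,\mu)$, and likewise ${\bf SFDCP}(\lambda_m,\mu_n)$ coincides with ${\bf MPS}(\lambda_m,\mu_n)$; the constraint sets are the same closed convex sets $K_{\mu},K_{\mu_n}$, which contain the origin because they are cones. Hence the unique solutions of the two complementarity systems are exactly the unique solutions $(x_{\lambda,\mu},u_{\lambda,\mu})$ and $(x_{\lambda_m,\mu_n},u_{\lambda_m,\mu_n})$ of the corresponding ${\bf MPS}$ systems furnished by Theorem \ref{th1}. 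Since the hypotheses of the corollary---Assumption \ref{ass} on the mappings together with $\lambda_m\to\lambda$, $\mu_n\to\mu$ and $K_{\mu_n}\overset{M}{\rightarrow}K_{\mu}$---are precisely those demanded by Theorem \ref{multi}, that theorem applies and yields $x_{\lambda_m,\mu_n}(t)\to x_{\lambda,\mu}(t)$ in $\mathcal{L}^2_{ad}([0,T]\times\Omega,\mathbb{R}^p)$ and $u_{\lambda_m,\mu_n}(t)\to u_{\lambda,\mu}(t)$ in $H[0,T]$. I do not expect any real obstacle here: the whole content is the cone equivalence, and the only thing to be careful about is that it be recorded in the almost-everywhere/almost-surely sense, which is automatic because it holds at each individual point $(t,\omega)$.
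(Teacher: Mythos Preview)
Your proposal is correct and follows exactly the route the paper intends: the corollary is stated without proof as an immediate consequence of Theorem \ref{multi} via the standard equivalence between a variational inequality over a closed convex cone and the corresponding complementarity problem, which the paper records in the introduction (see the passage around \eqref{CP1}--\eqref{CP2}). Your explicit pointwise verification of that equivalence and the observation that ${\bf SFDCP}(\lambda,\mu)$ coincides with ${\bf MPS}(\lambda,\mu)$ is precisely what the paper leaves implicit.
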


\section{Applications}
\setcounter{equation}{0}

In this section, by using the stability result presented in the previous section, we give some stability results for the spatial price equilibrium problem and the multi-agent optimization problem in stochastic environments.

\subsection{The stochastic spatial price equilibrium problem}
The spatial price equilibrium models have been widely studied because of its practical application value in agriculture, energy market, economics and finance \cite{Daniele2004,Li2015,zhang2023}. In this subsection, we specialize the Multi-parameter stability result to the stochastic spatial price equilibrium problem \cite{zeng2024}. To this end, we recall the stochastic spatial price equilibrium problem of a single commodity with memory and jumps in the time period of $[0,T]$ as follows.

\begin{itemize}
  \item $S_i$: the $i$th supply market, $i=1,2,\cdot\cdot\cdot, m$.
  \item $D_j$: the $j$th demand market, $j=1,2,\cdot\cdot\cdot, n$.
  \item $a_{ij}(t,\omega)$: the quantity of commodities conveyed from the supply market $S_i$ to the demand market $D_j$ at time $t$, and $a(t,\omega)=(a_{ij}(t,\omega))\in \mathbb{R}^{m\times n}$.
  \item $\bar{S}_i(t,\omega)=\sum_{j=1}^{n}a_{ij}(t,\omega)$: the amount of commodities provided by  supply market $S_i$ at time $t$, and $\bar{S}(t,\omega)=(\bar{S}_1(t,\omega),\cdots,\bar{S}_m(t,\omega))\in \mathbb{R}^{m}$.
  \item $\bar{D}_j(t,\omega)=\sum_{i=1}^{m}a_{ij}(t,\omega)$: the demand for commodities in demand market $D_j$ at time $t$, and $\bar{D}(t,\omega)=(\bar{D}_1(t,\omega),\cdots,\bar{D}_n(t,\omega))\in \mathbb{R}^{n}$.
  \item $p_i(t,\omega)$: the supply price of commodity associated with supply market $S_i$
      at time $t$, and $p(t,\omega)=(p_1(t,\omega),\cdots,p_m(t,\omega))\in \mathcal{L}^2_{ad}([0,T]\times \Omega,\mathbb{R}^m)$.
  \item $q_j(t,\omega)$: the demand price of commodity associated with demand market $D_j$
      at time $t$, and $q(t,\omega)=(q_1(t,\omega),\cdots,q_n(t,\omega))\in \mathcal{L}^2_{ad}([0,T]\times \Omega,\mathbb{R}^n)$.
  \item $c_{ij}(t,\omega)=c_{ij}(a_{ij}(t,\omega))$: the unit transportation cost from $S_i$ to $D_j$ at time $t$, and $c(t,\omega)=(c_{ij}(t,\omega))\in \mathbb{R}^{m\times n}$.
  \item $\mathcal{L}^2_{ad}= \mathcal{L}^2_{ad}([0,T]\times \Omega, \mathbb{R}^m)\times \mathcal{L}^2_{ad}([0,T]\times \Omega, \mathbb{R}^n)\times \mathcal{L}^2_{ad}([0,T]\times \Omega, \mathbb{R}^{m\times n})$ and
      $$\langle a, b\rangle_{\mathcal{L}^2_{ad}}=\mathbb{E}\int^T_0\langle a(t,\omega), b(t,\omega)\rangle dt, \; \forall a,b\in\mathcal{L}^2_{ad}.$$
  \item $u(t,\omega)=(\bar{S}(t,\omega),\bar{D}(t,\omega),a(t,\omega))\in \mathbb{R}^{m}\times \mathbb{R}^{n}\times \mathbb{R}^{m\times n}$.
  \item $K=\{(A,B,C):A=(A_1,A_2,\cdots,A_m)\in \mathbb{R}^m,\, B=(B_1,B_2,\cdots,B_n)\in\mathbb{R}^n,\, C=(C_{ij})\in\mathbb{R}^{m\times n},\, C_{ij}\ge 0, \, A_i=\sum_{j=1}^nC_{ij}, B_j=\sum_{i=1}^mC_{ij},\;i=1,2,\cdots, m;\,j=1,2,\cdots,n\}$.
  \item $U[0,T]=\{u\in \mathcal{L}_{ad}^2: u(t,\omega)\in K ,\;a.e.\,t\in[0,T],\; a.s.\,\omega\in\Omega\}$.
  \item For any $(p,q)\in \mathcal{L}^2_{ad}([0,T]\times \Omega,\mathbb{R}^m)\times \mathcal{L}^2_{ad}([0,T]\times \Omega,\mathbb{R}^n)$ and any $ u=(\bar{S},\bar{D},a)\in U_{\mu}[0,T]$, let
$$\tilde{F}(p,q,u)(s,\omega)=F(s,\omega,p(s,\omega),q(s,\omega),u(s,\omega)),\; \forall s\in[0,T],\forall \omega\in\Omega$$
and
\begin{align*}
&\quad \langle \tilde{F}(p,q,u), u\rangle_{\mathcal{L}_{ad}^2}\\
&=\mathbb{E}\int_0^T\langle p(t,\omega), \bar{S}(t,\omega)\rangle-\langle q(t,\omega),\bar{D}(t,\omega)\rangle+\langle c(a(t,\omega)),a(t,\omega)\rangle dt.
\end{align*}
\end{itemize}

The asset price processes $p(t,\omega)$ and $q(t,\omega)$ satisfy the following stochastic fractional differential equations with jumps:
\begin{align}\label{4.1}
  \begin{cases}
  dp(t)=b_1(t,p(t-),\bar{S}(t-))dt+\sigma_1(t,p(t-),\bar{S}(t-))(dt)^\alpha+f_1(t,p(t-),\bar{S}(t-))dB_1(t)\\
  \qquad\quad\;+\int_{\| x\|<c}G_1(t,p(t-),\bar{S}(t-),x)\tilde{N}_1(dt,dx),\\
 p(0)=p_0,\\
 dq(t)=b_2(t,q(t-),\bar{D}(t-))dt+\sigma_2(t,q(t-),\bar{D}(t-))(dt)^\alpha+f_2(t,q(t-),\bar{D}(t-))dB_2(t)\\
 \qquad\quad\;+\int_{\| x\|<c}G_2(t,q(t-),\bar{D}(t-),x)\tilde{N}_2(dt,dx),\\
 q(0)=q_0,
  \end{cases}
\end{align}
where $b_i,\sigma_i,f_i,G_i(i=1,2)$ are suitable measurable functions, $\sigma_1(t,p(t,\omega),\bar{S}(t,\omega))$ and $\sigma_2(t,q(t,\omega),\bar{D}(t,\omega))$ are continuous with respect to $t$, $B_1(t)$ and $B_2(t)$ are two $\mathcal{F}_t$-adapted Brownian motions,  $N_1,N_2$ are both $\mathcal{F}_t$-adapted Poisson measure, and their associated compensated martingale measures are defined by $\widetilde{N}_i(dt,dx) :=N_i(dt,dx)-v_i(dx)dt$ for $i=1,2$.
Moreover, we assume that $N_1,N_2,B_1,B_2$ are independent of each other. From \cite{zhang2023, zeng2024}, we have  the following definition of spatial price equilibrium within a stochastic environment influenced by memory and L\'{e}vy jumps.

\begin{defn}\label{spatial} Let
$u^{\ast}(t,\omega)=(\bar{S}^{\ast}(t,\omega),\bar{D}^{\ast}(t,\omega),a^{\ast}(t,\omega))$ such that $u^{\ast}\in U[0,T]$. Then $u^{\ast}$ is said to be a spatial price equilibrium in a stochastic environment if and only if the following conditions are satisfied:
\begin{align*}
 p^{\ast}_i(t,\omega)+c_{ij}(a_{ij}^{\ast}(t,\omega))
 \begin{cases}
 =q^{\ast}_j(t,\omega), \quad\text{if}\quad a^{\ast}_{ij}\geq0\\
 \geq q^{\ast}_j(t,\omega), \quad\text{if}\quad a^{\ast}_{ij}=0
 \end{cases}
 \;a.e.\,t\in[0,T],\; a.s.\,\omega\in\Omega,
\end{align*}
for $i=1,2,\cdots, m$; $j=1,2,\cdots, n$, where $p^{\ast}(t,\omega)$ and $q^{\ast}(t,\omega)$ satisfy \eqref{4.1}.
\end{defn}

From \cite{zeng2024}, we know that the spatial price equilibrium problem in stochastic environment is equivalent to the following stochastic system ${\bf SPEP}(b,\sigma,f,G,\tilde{F};K,y_0)$:
\begin{equation}\label{SP1}
 \begin{cases}
 dy(t)=b(t,y(t-),u(t-))dt+\sigma(t,y(t-),u(t-))(dt)^\alpha+f(t,y(t-),u(t-))dB(t)\\
 \qquad\qquad+\int_{\| x\|<c}G(t,y(t-),u(t-),x)\tilde{N}(dt,dx),\\
 y(0)=y_0,\\
 \langle \bar{F}(t,\omega,y(t,\omega),u(t,\omega)),v-u(t,\omega)\rangle\geq0,\;\forall v\in K,\;a.e.\,t\in [0,T],\;a.s.\,\omega\in\Omega,\;\alpha \in (\frac{1}{2},1),
 \end{cases}
\end{equation}
where
$$
  \begin{cases}
  y(t)=(p^{\ast}(t),q^{\ast}(t))^T,\; y(0)=(p^{\ast}_0,q^{\ast}_0)^T,\; u(t,\omega)=u^{\ast}(t,\omega),\\
  b(t,y(t),u(t))=(b_1(t,p^{\ast}(t),\bar{S}^{\ast}(t)),b_2(t,q^{\ast}(t),\bar{D}^{\ast}(t)))^T,\\
  \sigma(t,y(t),u(t))=(\sigma_1(t,p^{\ast}(t),\bar{S}^{\ast}(t)),\sigma_2(t,q^{\ast}(t),\bar{D}^{\ast}(t)))^T,\\
  f(t,y(t),u(t))=\left(
                   \begin{array}{cc}
                     f_1(t,p^{\ast}(t),\bar{S}^{\ast}(t)) & 0 \\
                     0 & f_2(t,q^{\ast}(t),\bar{D}^{\ast}(t)) \\
                   \end{array}
                 \right)\\
  G(t,y(t),u(t),x)=\left(
                     \begin{array}{cc}
                       G_1(t,p^{\ast}(t),\bar{S}^{\ast}(t),x) & 0 \\
                       0 & G_2(t,q^{\ast}(t),\bar{D}^{\ast}(t),x) \\
                     \end{array}
                   \right)\\
 B(t)=(B_1(t),B_2(t))^T,\; \tilde{N}(t,x)=(\tilde{N}_1(t,x),\tilde{N}_2(t,x))^T,\\
 \bar{F}(t,\omega,y(t,\omega),u(t,\omega))=F(t,\omega,p^{\ast}(t,\omega),q^{\ast}(t,\omega),u^{\ast}(t,\omega)),
  \end{cases}
$$
More importantly, it is evident that $K$ is a closed convex cone in this subsection, hence \eqref{SP1} also has an equivalent complementary problem as below:
$$
 \begin{cases}
 dy(t)=b(t,y(t-),u(t-))dt+\sigma(t,y(t-),u(t-))(dt)^\alpha+f(t,y(t-),u(t-))dB(t)\\
 \qquad\qquad+\int_{\| x\|<c}G(t,y(t-),u(t-),x)\tilde{N}(dt,dx),\\
 y(0)=y_0,\\
K\ni u(t,\omega)\; \bot\;  \bar{F}(t,\omega,y(t,\omega),u(t,\omega))\in K^*, \; a.e. \, t\in [0,T], \; a.s. \, \omega\in\Omega,\;\alpha \in (\frac{1}{2},1).
 \end{cases}
$$

It should be noted that, in the real world, accurate data are almost impossible to obtain, which leads to a system that is always perturbed. When we have enough data sampled and accurately enough, such perturbations have decreasing effects on the original system, we get more and more sufficient information, and the description of the model becomes more and more accurate, and the mappings of the perturbed system converge to the mappings of the original system, and the constraint set converges to a stable set. Therefore, the perturbations about the mappings and the constraint set in this paper are reasonable. The study of the convergence of solutions to perturbed systems can contribute to the idea of solving complex problems by approximating them with the solutions of a simpler series of problems. Furthermore, it can provide ideas for the design of related algorithms. Thus, it would be important and interesting to consider the stability of the stochastic system ${\bf SPEP}(b,\sigma,f,G,\tilde{F};K,y_0)$.

To this end, consider the multi-parameter system ${\bf SPEP}(b_{\lambda},\sigma_{\lambda},f_{\lambda},G_{\lambda},\tilde{F}_{\lambda};K_{\mu},y_0)$ with $\lambda,\mu\in(M,d)$ and its perturbed systems ${\bf SPEP}(b_{\lambda_m},\sigma_{\lambda_m},f_{\lambda_m},
G_{\lambda_m},\tilde{F}_{\lambda_m};K_{\mu_n},y_0)$ with $\lambda_m,\mu_n\in(M,d)$. By using Theorem \ref{multi}, we can obtain the following stability result for the stochastic spatial price equilibrium problem.
\begin{thm}
  Assume $\lambda_m, \mu_n\in(M,d)$ such that $\lambda_m\rightarrow\lambda$ and $\mu_n\rightarrow\mu$.
  Suppose $b_{\lambda_m},b_{\lambda};\sigma_{\lambda_m},\sigma_{\lambda};f_{\lambda_m},f_{\lambda};$ $G_{\lambda_m},G_{\lambda};F_{\lambda_m},F_{\lambda};\tilde{F}_{\lambda_m},\tilde{F}_{\lambda_m}$ satisfy the similar conditions in Assumption \ref{ass}, and $K_{\mu_n}\overset{M}{\rightarrow}K_{\mu}$. Moreover, let $(y_{\lambda,\mu}(t),u_{\lambda,\mu}(t))\in \mathcal{L}^2_{ad}([0,T]\times \Omega,\mathbb{R}^p)\times U_{\mu}[0,T]$ and $(y_{\lambda_m,\mu_n}(t),u_{\lambda_m,\mu_n}(t))\in \mathcal{L}^2_{ad}([0,T]\times \Omega,\mathbb{R}^p)\times U_{\mu_n}[0,T]$  be the unique solution of ${\bf SPEP}(b_{\lambda},\sigma_{\lambda},f_{\lambda},G_{\lambda},\tilde{F}_{\lambda};K_{\mu},y_0)$ and ${\bf SPEP}(b_{\lambda_m},\sigma_{\lambda_m},f_{\lambda_m},G_{\lambda_m},\tilde{F}_{\lambda_m};K_{\mu_n},y_0)$,  respectively. Then $y_{\lambda_m,\mu_n}(t)\rightarrow y_{\lambda,\mu}(t)$ in $\mathcal{L}^2_{ad}([0,T]\times \Omega,\mathbb{R}^p)$ and $u_{\lambda_m,\mu_n}(t)\rightarrow u_{\lambda,\mu}(t)$ in $H[0,T]$.
  \end{thm}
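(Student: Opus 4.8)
The plan is to recognize the stochastic spatial price equilibrium problem as a concrete instance of SFDVI with L\'{e}vy jump and then invoke Theorem \ref{multi} essentially verbatim. First I would recall from \cite{zeng2024} that ${\bf SPEP}(b_{\lambda},\sigma_{\lambda},f_{\lambda},G_{\lambda},\tilde{F}_{\lambda};K_{\mu},y_0)$ coincides with the system \eqref{SP1}: the state is the stacked process $y=(p,q)^T$, the drift $b_{\lambda}$ and the fractional coefficient $\sigma_{\lambda}$ are the stackings of the component drifts and fractional coefficients, the diffusion $f_{\lambda}$ and the jump coefficient $G_{\lambda}$ are the block-diagonal assemblies of the component coefficients, and $\bar{F}_{\lambda}$ is built from $F_{\lambda}$ exactly as displayed just after \eqref{SP1}. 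Under the identification $x\leftrightarrow y$, $b\leftrightarrow b_{\lambda}$, $\sigma\leftrightarrow f_{\lambda}$, $\sigma_{1}\leftrightarrow\sigma_{\lambda}$, $G\leftrightarrow G_{\lambda}$, $F\leftrightarrow\bar{F}_{\lambda}$, the system ${\bf SPEP}(b_{\lambda},\sigma_{\lambda},f_{\lambda},G_{\lambda},\tilde{F}_{\lambda};K_{\mu},y_0)$ is precisely ${\bf MPS}(\lambda,\mu)$ of \eqref{S5}, and ${\bf SPEP}(b_{\lambda_m},\sigma_{\lambda_m},f_{\lambda_m},G_{\lambda_m},\tilde{F}_{\lambda_m};K_{\mu_n},y_0)$ is precisely ${\bf MPS}(\lambda_m,\mu_n)$.

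Next I would verify that the hypotheses of Theorem \ref{multi} hold under this identification. The constraint set $K$ of this subsection is a closed convex cone in $\mathbb{R}^{m}\times\mathbb{R}^{n}\times\mathbb{R}^{m\times n}$, hence a nonempty closed convex set containing the origin; imposing the same on the perturbed sets $K_{\mu},K_{\mu_n}$ matches the standing requirement in Notations \ref{notation}, so the lifted sets $U_{\mu}[0,T]$ and $U_{\mu_n}[0,T]$ are well defined and, by Lemma \ref{rconvex}, nonempty closed convex. The standing hypothesis that $b_{\lambda_m},b_{\lambda};\sigma_{\lambda_m},\sigma_{\lambda};f_{\lambda_m},f_{\lambda};G_{\lambda_m},G_{\lambda};F_{\lambda_m},F_{\lambda};\tilde{F}_{\lambda_m},\tilde{F}_{\lambda}$ satisfy the analogue of Assumption \ref{ass} supplies the linear-growth bounds (i), the Lipschitz bounds (ii), the global Lipschitz continuity together with strong monotonicity (iii) of the Nemytskii operator $\tilde{F}_{\lambda}$, and the uniform continuity in $\lambda$ (iv). Here one must observe that the stacked and block-diagonal structure transfers each of these estimates from the component maps $b_i,\sigma_i,f_i,G_i,F$ to the assembled maps up to a harmless constant factor, since the squared Frobenius norm of a block-diagonal matrix with blocks $A$ and $B$ equals $\|A\|^2+\|B\|^2$ and, similarly, $\|(a,b)\|^2=\|a\|^2+\|b\|^2$ for stacked vectors. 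Finally, $K_{\mu_n}\overset{M}{\rightarrow}K_{\mu}$ is exactly the set-convergence hypothesis of Theorem \ref{multi}.

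With all hypotheses checked, Theorem \ref{th1} yields the unique solvability of ${\bf SPEP}(b_{\lambda},\dots;K_{\mu},y_0)$ and of each ${\bf SPEP}(b_{\lambda_m},\dots;K_{\mu_n},y_0)$, and Theorem \ref{multi} gives $y_{\lambda_m,\mu_n}(t)\rightarrow y_{\lambda,\mu}(t)$ in $\mathcal{L}^2_{ad}([0,T]\times\Omega,\mathbb{R}^p)$ and $u_{\lambda_m,\mu_n}(t)\rightarrow u_{\lambda,\mu}(t)$ in $H[0,T]$, which is the assertion. I expect the only genuine point requiring care, in an otherwise routine reduction, to be making the abstract identification airtight: one has to confirm that the equivalence established in \cite{zeng2024} is compatible with parameter perturbation (so that perturbing the component data $b_i,\sigma_i,f_i,G_i,F$ really is the same as perturbing the assembled $b,\sigma,f,G,\bar{F}$), that $0\in K_{\mu}$ so that Lemma \ref{Un} and hence Theorem \ref{pro} can be applied to the Mosco convergence of the lifted constraint sets $U_{\mu_n}[0,T]$, and that the assembled operator $\tilde{F}_{\lambda}$ inherits constants $L_F>\bar{C}$ for which Assumption \ref{ass}(iii) holds, so that an admissible step size $\rho\in(0,2\bar{C}/L_F^2)$ exists in the projection argument behind Theorem \ref{multi}. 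Once these bookkeeping points are settled, the conclusion is immediate.
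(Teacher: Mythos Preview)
Your proposal is correct and follows exactly the approach of the paper: recognize ${\bf SPEP}$ as a special instance of ${\bf MPS}(\lambda,\mu)$ via the identification in \eqref{SP1} and then apply Theorem \ref{multi} directly. The paper in fact gives no further argument beyond the sentence ``By using Theorem \ref{multi}, we can obtain the following stability result,'' so your write-up is already more detailed than the original.
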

\subsection{Stochastic multi-agent optimization problem}
Multi-agent optimization problems are problems in which multiple agents in a system optimise a given objective through cooperation, competition, or information sharing. This type of problem is widespread in many real-world applications \cite{Nedic2014}, including scenarios such as wireless sensor networks, intelligent transportation systems, and robot group cooperation. It is therefore particularly important to explore effective optimization strategies for multiple agents.

With the expansion of the system size and the number of agents, the complexity of the optimization problem increases. While pursuing its own goals, each agent must also take into account the behavior and decisions of other agents, and this interdependence makes the problem more difficult to solve. In addition, the uncertainty of the environment in which the agents are located also poses challenges to the optimization process. In this context, game theory, as a tool for studying the interaction and strategy choice of decision makers, provides a theoretical basis for cooperation and competition in Multi-agent systems. It has been shown that the states of Multi-agent systems can exhibit memory, jumps and uncertainty \cite{Liu2021,Chen2021,WBang2013}. Therefore, we consider here a stochastic fractional differential game problem with L\'{e}vy jump about multi-agent and solve the Nash equilibrium of the problem using the stochastic fractional differential variational inequality with L\'{e}vy jump in this subsection. In addition, Rockafellar \cite{Rockafellar2018} studied the local stability of the Nash equilibrium in the framework of multi-agent optimization problem using variational analysis in 2018, and here we study the stability of the Nash equilibrium for the stochastic multi-agent optimization problem using the results in Section 4.

Denote by $x\in\mathcal{L}^2_{ad}([0,T]\times \Omega,\mathbb{R}^p)$ and $u\in \mathcal{L}^2_{ad}([0,T]\times \Omega,\mathbb{R}^p)$ the dynamic states and strategies of agents, respectively. For each $1\leq i\leq p$, $x^i$ and $u^i$ are, respectively, the state and strategy of agent $i$. Furthermore, $x^{-i}=(x^1,x^2,\cdots,x^{i-1},x^{i+1},\cdots,x^p)$ and $u^{-i}=(u^1,u^2,\cdots,u^{i-1},u^{i+1},\cdots,u^p)$. For our discussion, we assume that $i$-th agent's strategy set is $$U^i[0,T]=\left\{u^i(t,\omega)\in\mathcal{L}^2_{ad}([0,T]\times\Omega,\mathbb{R}): u(t,\omega)\in K^i, \;a.e.\,t\in[0,T],\; a.s.\,\omega\in\Omega\right\},$$
where $K^i$ is closed convex subset of $\mathbb{R}$, and the $i$-th agent's strategy is independent of other agents' strategies. Agent $i$'s cost function $\theta^i(x,u)$ depends on all agents' states and strategies. A Nash equilibrium for the stochastic fractional differential game with L\'{e}vy jump about multi-agent is to find a pair $(\bar{x},\bar{u})$  such that for every $i$, $(\bar{x}^i,\bar{u}^i)$ is a solution of the following problem
\begin{align}\label{agent1}
&minimize_{(x^i,u^i)}\; \;\theta^i(x^i,\bar{x}^{-i},u^i,\bar{u}^{-i})\\
&subject\; to \nonumber\\
&\qquad\qquad dx^i(t)=b^i(t,x^i(t-),,u^i(t-))dt+\sigma_1(t,x^i(t-),u^i(t-))(dt)^\alpha+\sigma^i(t,x^i(t-),u^i(t-))dB(t)\nonumber\\
&\qquad\qquad \qquad\qquad\mbox{} +\int_{\| y\|<c}G^i(t,x^i(t-),u^i(t-),y)\tilde{N}(dt,dy), \; \alpha \in (\frac{1}{2},1),\nonumber\\
&\qquad\qquad x^i(0)=p^i_0,\nonumber\\
&\qquad\qquad u^i\in U^i[0,T].\nonumber
\end{align}

It is worth noting that a Nash equilibrium can be obtained by solving a stochastic fractional differential inequality with L\'{e}vy jump. To this end, we assume $\theta^i(x^i,\bar{x}^{-i},u^i,\bar{u}^{-i})$ is convex and continuously differentiable in $u^i$, $\tilde{F}(x,u)=(\triangledown_{u^1}\theta^1(x,u),\triangledown_{u^2}\theta^2(x,u),\cdots,\triangledown_{u^p}\theta^p(x,u))$, $U[0,T]=\prod_{i=1}^pU^i[0,T]$ and $K=\prod_{i=1}^pK^i$.
Moreover we define a function $F(t,\omega,x(t,\omega),u(t,\omega))=\tilde{F}(x,u)\;a.e.\,t\in[0,T],\; a.s.\,\omega\in\Omega$. Then we have the following result.
\begin{thm}\label{nash}
$(x,u)$ is a Nash equilibrium  for the
stochastic fractional differential game with L\'{e}vy jump \eqref{agent1} if and only if $(x,u)$ solves the following stochastic fractional differential variational inequality with L\'{e}vy jump
\begin{align}\label{agent2}
\begin{cases}
 dx(t)=b(t,x(t-),u(t-))dt+\sigma_1(t,x(t-),u(t-))(dt)^\alpha+\sigma(t,x(t-),u(t-))dB(t)\\
 \qquad\qquad\mbox{} +\int_{\| y\|<c}G(t,x(t-),u(t-),y)\tilde{N}(dt,dy), \; \alpha \in (\frac{1}{2},1),\\
 x(0)=p_0,\\
 \langle F(t,\omega,x(t,\omega),u(t,\omega)),v-u(t,\omega)\rangle\geq0, \; \forall v\in K, \; a.e. \, t\in [0,T], \; a.s. \, \omega\in\Omega,
 \end{cases}
\end{align}
where
\begin{align*}
\begin{cases}
b(t,x(t-),u(t-))=(b^1(t,x^1(t-),,u^1(t-)),b^2(t,x^2(t-),,u^2(t-)),\cdots,b^p(t,x^p(t-),u^p(t-))),\\
\sigma_1(t,x(t-),u(t-))=(\sigma_1^1(t,x^1(t-),,u^1(t-)),\sigma_1^2(t,x^2(t-),u^2(t-)),\cdots,\sigma_1^p(t,x^p(t-),u^p(t-))),\\
\sigma(t,x(t-),u(t-))=(\sigma^1(t,x^1(t-),,u^1(t-)),\sigma^2(t,x^2(t-),,u^2(t-)),\cdots,\sigma^p(t,x^p(t-),u^p(t-))),\\
G(t,x(t-),u(t-),y)=(G^1(t,x^1(t-),u^1(t-),y),G^2(t,x^2(t-),u^2(t-),y)\cdots,G^p(t,x^p(t-),u^p(t-),y)).
\end{cases}
\end{align*}
\end{thm}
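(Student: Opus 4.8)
The plan is to establish the two implications of the equivalence by combining three ingredients: (a) the unique solvability of the stochastic fractional differential equation with L\'{e}vy jump appearing in \eqref{agent2} (of the type guaranteed by Theorem \ref{th1}), which lets us regard the state $x$ as determined by the strategy $u$, so that agent $i$'s effective feasible set in \eqref{agent1} is $U^i[0,T]$; (b) the first-order characterization of a minimizer of a convex, continuously differentiable functional over a closed convex subset of a Hilbert space; and (c) Lemma \ref{requi}, which converts the pointwise variational inequality in \eqref{agent2} into the Hilbert-space inequality $\langle\tilde F(x,u),v'-u\rangle_{H[0,T]}\ge 0$ for all $v'\in U[0,T]$. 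The decoupling $U[0,T]=\prod_{i=1}^{p}U^i[0,T]$ together with $\tilde F(x,u)=(\nabla_{u^1}\theta^1(x,u),\dots,\nabla_{u^p}\theta^p(x,u))$ then splits this single inequality into $p$ agentwise variational inequalities, each of which is exactly the optimality condition of the corresponding agent.

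\textbf{Necessity.} Let $(x,u)$ be a Nash equilibrium of \eqref{agent1}. For every $i$ the component $x^i$ solves the $i$-th stochastic fractional differential equation with L\'{e}vy jump in \eqref{agent1}; writing these $p$ equations as a single $\mathbb{R}^p$-valued system and using the block definitions of $b,\sigma_1,\sigma,G$ in the statement reproduces precisely the dynamical part of \eqref{agent2} with $x(0)=p_0$. Fix $i$ and freeze $x$ and $u^{-i}$ at their equilibrium values; then $u^i$ minimizes the convex, continuously differentiable functional $w\mapsto\theta^i(x^i,x^{-i},w,u^{-i})$ over $U^i[0,T]$, which is nonempty, closed and convex by Lemma \ref{rconvex}. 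The standard variational characterization of such a minimizer gives $\langle\nabla_{u^i}\theta^i(x,u),v^i-u^i\rangle\ge 0$ for all $v^i\in U^i[0,T]$ (inner product in the corresponding $\mathcal L^2$ space of adapted processes). Assembling these over $i=1,\dots,p$ and using the product structure of $U[0,T]$ and of $\tilde F$ yields $\langle\tilde F(x,u),v'-u\rangle_{H[0,T]}\ge 0$ for all $v'\in U[0,T]$, and Lemma \ref{requi} turns this into the pointwise variational inequality $\langle F(t,\omega,x(t,\omega),u(t,\omega)),v-u(t,\omega)\rangle\ge 0$ for all $v\in K$, a.e.\ $t\in[0,T]$, a.s.\ $\omega\in\Omega$. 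Hence $(x,u)$ solves \eqref{agent2}.

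\textbf{Sufficiency.} Conversely, suppose $(x,u)$ solves \eqref{agent2}. Applying Lemma \ref{requi} again, the pointwise variational inequality is equivalent to $\langle\tilde F(x,u),v'-u\rangle_{H[0,T]}\ge 0$ for all $v'\in U[0,T]$. Fix $i$ and, for arbitrary $v^i\in U^i[0,T]$, take the test point $v'=(u^1,\dots,u^{i-1},v^i,u^{i+1},\dots,u^p)\in U[0,T]$; since $\tilde F$ and $v'-u$ are block-structured, all blocks but the $i$-th vanish and we obtain $\langle\nabla_{u^i}\theta^i(x,u),v^i-u^i\rangle\ge 0$ for all $v^i\in U^i[0,T]$. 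By convexity and differentiability of $\theta^i$ in the $u^i$-argument, this is exactly the statement that $u^i$ minimizes $\theta^i(x^i,x^{-i},\cdot,u^{-i})$ over $U^i[0,T]$; moreover $x^i$ solves the $i$-th block of the dynamics in \eqref{agent2}, which is the state constraint of \eqref{agent1} with $x^i(0)=p^i_0$. Thus $(x^i,u^i)$ solves \eqref{agent1} for each $i$, i.e.\ $(x,u)$ is a Nash equilibrium.

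\textbf{Main obstacle.} The routine parts are the concatenation of the $p$ state equations and the bookkeeping with the product sets $K=\prod_{i=1}^p K^i$ and $U[0,T]=\prod_{i=1}^p U^i[0,T]$. The point requiring care is the reduction of agent $i$'s problem \eqref{agent1}, in which one minimizes jointly over $(x^i,u^i)$ subject to the state equation, to a minimization in $u^i$ alone whose stationarity condition involves only the partial gradient $\nabla_{u^i}\theta^i$: this relies on reading the state equation as a uniquely solvable equality constraint, so that $x^i$ is the realized trajectory and the decision variable is $u^i$, in the spirit of the differential variational inequality formulation. One must also verify the regularity hypotheses needed to invoke Lemma \ref{requi} for $\tilde F=(\nabla_{u^1}\theta^1,\dots,\nabla_{u^p}\theta^p)$, namely that it maps $\mathcal L^2_{ad}$-adapted processes to $\mathcal L^2_{ad}$-adapted processes with the measurability and integrability demanded in Notations \ref{funs}--\ref{notation}; granting this, together with the standard Hilbert-space first-order optimality condition for convex functionals, the two implications above go through.
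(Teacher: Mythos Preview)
Your proposal is correct and follows essentially the same approach as the paper: both directions hinge on Lemma \ref{requi} to pass between the pointwise and Hilbert-space formulations, the product structure $U[0,T]=\prod_i U^i[0,T]$ to split or assemble the single inequality into $p$ agentwise ones, and the first-order optimality condition for convex differentiable functionals. The paper's proof is terser---it simply invokes ``convexity and the minimum principle'' without discussing the reduction from joint $(x^i,u^i)$-minimization to $u^i$-minimization or the regularity needed for Lemma \ref{requi}---so your identification of these as the points requiring care is apt, but the underlying argument is the same.
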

\begin{proof}
First of all,  it follows from Lemma \ref{requi} that
$$\langle F(t,\omega,x(t,\omega),u(t,\omega)),v-u(t,\omega)\rangle\geq0, \; \forall v\in K, \; a.e. \, t\in [0,T], \; a.s. \, \omega\in\Omega$$
is equivalent to the following variational inequality
\begin{equation}\label{agent3}
\langle \tilde{F}(x,u),v'-u\rangle_{H[0,T]}\geq 0,\;\forall v' \in U[0,T].
\end{equation}
By convexity and the minimum principle, one has that $(x,u)$ is the Nash equilibrium for \eqref{agent1} if and only if for each $i$, the $x^i$ satisfies the corresponding stochastic fractional differential equation with L\'{e}vy jump, and $u^i$ satisfies the following
\begin{equation}\label{agent4}
\langle \triangledown_{u^i}\theta^i(x,u),v^i-u^i\rangle_{H[0,T]} \geq0,\; \forall v^i\in U^i[0,T].
\end{equation}
And then \eqref{agent3} can be derived from \eqref{agent4}.

Conversely, if $(x,u)$ solves \eqref{agent2}, then $x$ satisfies the corresponding stochastic fractional differential equation with L\'{e}vy jump and \eqref{agent3} is satisfied. For any fixed $i$, in \eqref{agent3}, let $v^{-i}=u^{-i}$ and $v^i$ is an arbitrary element in $U^i[0,T]$. Then one has \eqref{agent4} immediately.
\end{proof}

Now we can solve the Nash equilibrium of the  stochastic fractional differential game problem with L\'{e}vy jump about multi-agent by our results presented in Section 4.

\begin{thm}
If the functions in \eqref{agent2} satisfy the Assumption \ref{ass}, then the Nash equilibrium of the  stochastic fractional differential game problem with L\'{e}vy jump about multi-agent \eqref{agent1} exists and is unique.
\end{thm}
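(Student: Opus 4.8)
The plan is to obtain the result as an immediate consequence of the equivalence in Theorem \ref{nash} together with the existence and uniqueness theorem for SFDVI with L\'{e}vy jump (Theorem \ref{th1}). First I would recall that, by Theorem \ref{nash}, a pair $(x,u)$ is a Nash equilibrium of the stochastic fractional differential game \eqref{agent1} if and only if $(x,u)$ solves the SFDVI with L\'{e}vy jump \eqref{agent2}, whose drift, fractional, diffusion and jump coefficients $b,\sigma_1,\sigma,G$ are the componentwise stacks of the per-agent coefficients $b^i,\sigma_1^i,\sigma^i,G^i$, whose variational mapping is $F(t,\omega,x,u)=\tilde{F}(x,u)=(\triangledown_{u^1}\theta^1(x,u),\cdots,\triangledown_{u^p}\theta^p(x,u))$, and whose feasible set is the nonempty closed convex set $K=\prod_{i=1}^p K^i\subset\mathbb{R}^p$. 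Hence it suffices to prove that \eqref{agent2} admits a unique solution in $\mathcal{L}^2_{ad}([0,T]\times\Omega,\mathbb{R}^p)\times U[0,T]$.

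Next, I would observe that \eqref{agent2} is a system of exactly the form treated in Theorem \ref{th1} (it is the instance of ${\bf MPS}(\lambda,\mu)$ with the parameters held fixed), so Theorem \ref{th1} applies once we verify that the data of \eqref{agent2} fulfil conditions (i)--(iii) of Assumption \ref{ass}. This is precisely the hypothesis of the statement: the stacked coefficients $b,\sigma_1,\sigma,G$ inherit the linear-growth and Lipschitz bounds from those of the individual agents (passing from componentwise to joint estimates via the elementary inequality $\|\sum_{i=1}^p z_i\|^2\le p\sum_{i=1}^p\|z_i\|^2$, exactly as in \eqref{simple}), while $\tilde{F}$ assembled from the gradients $\triangledown_{u^i}\theta^i$ is assumed globally Lipschitz and strongly monotone in $u$ as required by condition (iii); the convexity and $C^1$-smoothness of each $\theta^i$ in $u^i$ ensure in addition that $\tilde{F}$ is well defined, measurable and adapted. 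Invoking Theorem \ref{th1} then gives a unique solution $(x,u)$ of \eqref{agent2}, and by the equivalence in Theorem \ref{nash} this $(x,u)$ is the unique Nash equilibrium of \eqref{agent1}.

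The proof is essentially a bookkeeping exercise, and the only step deserving attention is the passage from the structural conditions imposed on the $p$ individual agents to conditions (i)--(iii) of Assumption \ref{ass} for the stacked system — i.e. checking that the growth and Lipschitz constants for $b,\sigma_1,\sigma,G$ assembled from all agents remain finite and that $\tilde{F}$ keeps its global Lipschitz continuity and strong monotonicity. Since the statement directly hypothesizes that the functions in \eqref{agent2} satisfy Assumption \ref{ass}, this verification is automatic; no estimate beyond those already used in the proof of Theorem \ref{th1} is needed. I would close by noting, as a side remark, that the strong monotonicity built into condition (iii) is exactly what forces the game to admit \emph{at most} one equilibrium, consistently with the uniqueness asserted.
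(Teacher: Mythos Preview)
Your proposal is correct and follows essentially the same approach as the paper: the paper's proof is a one-liner that simply invokes Theorems \ref{th1} and \ref{nash} directly, which is exactly the chain of reasoning you spell out. Your additional commentary about stacking the per-agent coefficients is harmless bookkeeping, and as you yourself note, the hypothesis that the functions in \eqref{agent2} satisfy Assumption \ref{ass} makes that verification automatic.
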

\begin{proof}
It follows from Theorems \ref{th1} and \ref{nash} directly.
\end{proof}

In \cite{Rockafellar2018}, Rockafellar tied the local stability of Nash
equilibrium in a game of multi-agent optimization problem to a parameterized
variational inequality. However, the author only considered the case where the functions are perturbed by one parameter, and the case where the strategy set is perturbed is often much more complex. Here we consider the case where the functions and strategy set are perturbed by two parameters. And the parameterized stochastic fractional differential variational inequality with L\'{e}vy jump called ${\bf MAS}(\lambda,\mu)$ is defined as follows:
\begin{align}\label{agent5}
\begin{cases}
 dx_{\lambda,\mu}(t)=b_{\lambda}(t,x_{\lambda,\mu}(t-),u_{\lambda,\mu}(t-))dt
 +\sigma_{1\lambda}(t,x_{\lambda,\mu}(t-),u_{\lambda,\mu}(t-))(dt)^\alpha\\
 \qquad\qquad\;\mbox{}+\sigma_{\lambda}(t,x(t-),u(t-))dB(t)+\int_{\| y\|<c}G_{\lambda}(t,x_{\lambda,\mu}(t-),u_{\lambda,\mu}(t-),y)\tilde{N}(dt,dy), \; \alpha \in (\frac{1}{2},1),\\
 x_{\lambda,\mu}(0)=p_0,\\
 \langle F_{\lambda}(t,\omega,x_{\lambda,\mu}(t,\omega),u_{\lambda,\mu}(t,\omega)),v-u_{\lambda,\mu}(t,\omega)\rangle\geq0, \; \forall v\in K_{\mu}, \; a.e. \, t\in [0,T], \; a.s. \, \omega\in\Omega.
 \end{cases}
\end{align}

By employing Theorem \ref{multi}, we have the following stability result for the Nash equilibrium of the stochastic multi-agent optimization problem.

\begin{thm}
Assume $\lambda_m, \mu_n\in(M,d)$ such that $\lambda_m\rightarrow\lambda$ and $\mu_n\rightarrow\mu$. Suppose  $b_{\lambda_m},b_{\lambda};\sigma_{\lambda_m},\sigma_{\lambda};$ $\sigma_{1\lambda_m},\sigma_{1\lambda};G_{\lambda_m}, G_{\lambda}; F_{\lambda_m}, F_{\lambda};\tilde{F}_{\lambda_m},\tilde{F}_{\lambda_m}$ satisfy Assumption \ref{ass} and $K_{\mu_n}\overset{M}{\rightarrow}K_{\mu}$. Let $(x_{\lambda,\mu}(t),u_{\lambda,\mu}(t))$ be the unique solution of system ${\bf MAS}(\lambda,\mu)$
and $(x_{\lambda_m,\mu_n}(t),u_{\lambda_m,\mu_n}(t))$ be the unique solution of system ${\bf MAS}(\lambda_m,\mu_n)$. Then $x_{\lambda_m,\mu_n}(t)\rightarrow x_{\lambda,\mu}(t)$ in $\mathcal{L}^2_{ad}([0,T]\times \Omega,\mathbb{R}^p)$ and $u_{\lambda_m,\mu_n}(t)\rightarrow u_{\lambda,\mu}(t)$ in $\mathcal{L}^2_{ad}([0,T]\times \Omega,\mathbb{R}^p)$.
\end{thm}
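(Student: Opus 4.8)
The plan is to recognize that the parameterized system ${\bf MAS}(\lambda,\mu)$ written in \eqref{agent5} is, term for term, an instance of the abstract multi-parameter system ${\bf MPS}(\lambda,\mu)$ of \eqref{S3}: here the state space and the strategy space coincide (so $q=p$), and the drift, fractional, diffusion, jump and variational data $b_\lambda,\sigma_{1\lambda},\sigma_\lambda,G_\lambda,F_\lambda$ are exactly the coordinate-wise assemblies of the single-agent ingredients $b^i_\lambda,\sigma^i_{1\lambda},\sigma^i_\lambda,G^i_\lambda$ and of the gradients $\triangledown_{u^i}\theta^i$ supplied by Theorem \ref{nash}. Under this identification the space $H[0,T]$ of Notations \ref{notation} is nothing but $\mathcal{L}^2_{ad}([0,T]\times\Omega,\mathbb{R}^p)$, so the two convergence conclusions of Theorem \ref{multi} become precisely the two assertions to be proved.

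First I would invoke Theorem \ref{th1}: since the assembled data $b_{\lambda_m},\sigma_{1\lambda_m},\sigma_{\lambda_m},G_{\lambda_m},F_{\lambda_m}$ (together with their $\lambda$-limits) satisfy conditions (i)--(iii) of Assumption \ref{ass}, and $K_{\mu_n},K_\mu$ are nonempty closed convex subsets of $\mathbb{R}^p$ containing the origin, each of the systems ${\bf MAS}(\lambda,\mu)$ and ${\bf MAS}(\lambda_m,\mu_n)$ admits a unique solution, lying in $\mathcal{L}^2_{ad}([0,T]\times\Omega,\mathbb{R}^p)\times U_\mu[0,T]$, respectively $\mathcal{L}^2_{ad}([0,T]\times\Omega,\mathbb{R}^p)\times U_{\mu_n}[0,T]$; this legitimizes the phrase ``the unique solution'' in the statement. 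Then, since the full Assumption \ref{ass} --- including the uniform continuity in $\lambda$ of condition (iv) and the Lipschitz continuity together with the strong monotonicity of $\tilde F_\lambda$ in condition (iii) --- is assumed outright for the assembled maps, and $K_{\mu_n}\overset{M}{\rightarrow}K_\mu$ is also assumed, every hypothesis of Theorem \ref{multi} is in force. Applying Theorem \ref{multi} to the pair ${\bf MAS}(\lambda_m,\mu_n)$, ${\bf MAS}(\lambda,\mu)$ yields $x_{\lambda_m,\mu_n}(t)\to x_{\lambda,\mu}(t)$ in $\mathcal{L}^2_{ad}([0,T]\times\Omega,\mathbb{R}^p)$ and $u_{\lambda_m,\mu_n}(t)\to u_{\lambda,\mu}(t)$ in $H[0,T]=\mathcal{L}^2_{ad}([0,T]\times\Omega,\mathbb{R}^p)$, which is the claim.

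The only point needing a word of care --- and hence the ``main obstacle,'' modest as it is --- is the bookkeeping that the coordinate-wise assembled data inherit Assumption \ref{ass} from the single-agent data: every growth, Lipschitz and monotonicity estimate for the stacked maps follows by summing the corresponding single-agent estimates over $i$ and absorbing the resulting factor $p$ into the constants, using $\|(a^1,\dots,a^p)\|^2=\sum_{i=1}^p\|a^i\|^2$, while the block-diagonal form of the diffusion and jump coefficients makes conditions (i)--(ii) transparent. Since the theorem hypothesizes Assumption \ref{ass} directly for the assembled maps, this bookkeeping is already subsumed, and the proof reduces to the two citations of Theorems \ref{th1} and \ref{multi} above.
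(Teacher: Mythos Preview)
Your proposal is correct and matches the paper's approach: the paper does not give a separate proof for this theorem but simply states it as a direct consequence of Theorem \ref{multi}, since ${\bf MAS}(\lambda,\mu)$ is verbatim an instance of ${\bf MPS}(\lambda,\mu)$ with $q=p$. Your additional remarks on coordinate-wise bookkeeping are harmless elaboration, but unnecessary here because the hypotheses already impose Assumption \ref{ass} on the assembled maps directly.
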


\section{Conclusion}

In this paper, we study the multi-parameter stability of SFDVI with L\'{e}vy jump. By using some known results concerned with the Mosco convergence and convex analysis, we show that, for each $u\in H[0,T]$, $P_{U_{\mu_n}[0,T]}(u)$ converges strongly to $P_{U_{\mu}[0,T]}(u)$ under the assumption $K_{\mu_n}\overset{M}{\rightarrow}K_{\mu}$, where $U_{\mu}[0,T]$ and $U_{\mu_n}[0,T]$ are Hilbert spaces formed by squared integrable $\mathcal{F}_t$-adapted stochastic processes  in the ranges $k_{\mu}$ and $k_{\mu_n}$, respectively. Moreover, by employing the projection methods and some inequality techniques, we prove that the sequence of solutions of the perturbed systems converges strongly to the solution of the original system. Finally, our abstract results are applied to obtain the multi-parameter stability of solutions for the stochastic spatial price equilibrium problem and also the stability of Nash equilibrium for multi-agent optimization problem.

It is worth mentioning that Zhang et al. \cite{zhang2023s} proposed the Euler scheme for solving a class of SDVIs and
applied their results to the electrical circuits with diodes and the collapse of the bridge problems in stochastic environment. However,  to the best of authors' knowledge, numerical methods for solving SFDVI with L\'{e}vy jump have not been considered in the literature. Thus, it would be important and interesting to develop some numerical methods  for solving SFDVI with L\'{e}vy jump. We leave these as our future work.


\begin{thebibliography}{99}

\bibitem{Abouagwa2019}
M. Abouagwa and J. Li.
Stochastic fractional differential equations driven by L\'{e}vy noise under Carath\'{e}odory conditions.
{\em Journal of Mathematical Physics}, {\bf60}:022701, 2019.

\bibitem{Ammar2023}
K. Ammar.
Fundamentals of Functional Analysis.
{\em Springer, Singapore}, 2023.


\bibitem{Applebaum2004}
D. Applebaum.
L\'{e}vy Processes and Stochastic Calculus.
{\em Cambridge University Press, UK}, 2004.


\bibitem{Bauschke2017}
H.H. Bauschke and P.L. Combettes.
Convex Analysis and Monotone Operator Theory in Hilbert Spaces.
{\em Springer Cham}, 2017.


\bibitem{Biagini2008}
F. Biagini, Y.Z. Hu, B. \O ksendal and T.S. Zhang.
 Stochastic Calculus for Fractional Brownian Motion and Applications.
{\em Springer, London}, 2008.

\bibitem{Chen2021}
S.Y. Su, Q. An, Y.Y. Ye and H.S. Su.
Positive consensus of fractional-order multi-agent systems.
{\em Neural Computing and Applications}, {\bf33(23)}:16139-16148, 2021.

\bibitem{Chen2014}
X.J. Chen and Z. Wang.
Differential variational inequality approach to dynamic games with shared constraints.
{\em Mathematical Programming}, {\bf146}:379-408, 2014.

\bibitem {Daniele2004}
P. Daniele.
Time-dependent spatial price equilibrium problem: existence and stability results for the quantity formulation model.
{\em Journal of Global Optimization}, {\bf28(3)}:283-295, 2004.

\bibitem{Facchinei2003}
F. Facchinei and J.S. Pang.
Finite-Dimensional Variational Inequalities and Complementarity Problems.
{\em Springer, New York}, 2003.

\bibitem {Guo2020}
F.M. Guo, W. Li, Y.B. Xiao and S. Mig\'{o}rski.
Stability analysis of partial differential variational inequalities in Banach spaces.
{\em Nonlinear Analysis: Modelling and Control}, {\bf25(1)}:69-83, 2020.

\bibitem{Gwinner2000}
 J. Gwinner.
 A class of random variational inequalities and simple random unilateral boundary value problems-existence, discretization, finite element approximation.
 {\em Stochastic Analysis and Applications}, {\bf18(6)}:967-993, 2000.


\bibitem {Gwinner2013}
J. Gwinner.
On a new class of differential variational inequalities and a stability result.
{\em Mathematical Programming}, {\bf139(1)}:205-221, 2013.

\bibitem{Ibaraki2003}
T. Ibaraki, Y. Kimura and W. Takahashi.
Convergence theorems for generalized projections and maximal monotone operators in Banach spaces.
{\em Abstract and Applied Analysis}, {\bf10}:621-629,2003.

\bibitem{Jiang2021}
Y.R. Jiang, Q.Q. Song and Q.F. Zhang.
Uniqueness and Hyers-Ulam stability of random differential variational inequalities with nonlocal boundary conditions.
{\em Journal of Optimization Theory and Applications}, {\bf189(2)}:646-665, 2021.

\bibitem{Jumarie2004}
G. Jumarie.
Fractional Brownian motions via random walk in the complex plane and via fractional derivative. Comparison and further results on their Fokker-Planck equations.
{\em Chaos, Solitons $\&$ Fractals}, {\bf22(4)}:907-925, 2004.

\bibitem{Khasminskii2012}
R. Khasminskii.
Stochastic Stability of Differential Equations.
{\em Springer Berlin, Heidelberg}, 2012.

\bibitem {Kumar2017}
S. Kumar.
Mild solution and fractional optimal control of semilinear system with fixed delay.
{\em Journal of Optimization Theory and Applications}, {\bf174(1)}:108-121, 2017.


\bibitem {Li2010}
X.S. Li, N.J. Huang and D. O'Regan.
Differential mixed variational inequalities in finite dimensional spaces.
{\em Nonlinear Analysis: Theory, Methods $\&$ Applications}, {\bf72(9)}:3875-3886, 2010.

\bibitem{Li2019}
X.W. Li, Z.H. Liu and M. Sofonea.
 Unique solvability and exponential stability of differential hemivariational inequalities.
  {\em Applicable Analysis}, {\bf99(14)}:2489-2506, 2019.

\bibitem{Li2015}
W. Li, X. Wang and N. J. Huang.
Differential inverse variational inequalities in finite dimensional spaces.
{\em Acta Mathematica Scientia}, {\bf35}: 407-422, 2015.

\bibitem{Liu2021}
B. Liu, H.S. Su, L.C. Wu, X.L. Li and X. Lu.
Fractional-order controllability of multi-agent systems with time-delay.
{\em Neurocomputing}, {\bf424}: 268-277, 2021.


\bibitem {Liu2017}
Z.H. Liu and S.D. Zeng.
Differential variational inequalities in infinite banach spaces.
{\em Acta Mathematica Scientia}, {\bf37(1)}:26-32, 2017.


\bibitem {Migorski2022}
S. Mig\'{o}rski.
 Well-posedness of constrained evolutionary differential variational hemivariational inequalities with applications.
{\em Nonlinear Analysis: Real World Applications}, {\bf67}:103593, 2022.

\bibitem {Min2023}
H. Min, S. Shi, S. Xu, J. Guo and Z. Zhang.
 Fixed-time Lyapunov criteria of stochastic nonlinear systems and its generalization.
{\em IEEE Transactions on Automatic Control}, {\bf68(8)}:5052-5058, 2023.


\bibitem{Mosco1969}
U. Mosco.
Convergence of convex sets and of solutions of variational inequalities
{\em Advances in Mathematics}, {\bf3(4)}:510-585,1969.

\bibitem{Nedic2014}
A. Nedi\'{c}, J.S. Pang, G. Scutari, Y. Sun.
Multi-agent Optimization.
{\em Springer Cham}, 2018.


\bibitem{Nunno2009}
G.D. Nunno, B. \O ksendal and  F. Proske.
Malliavin Calculus for L\'{e}vy Processes with Applications to Finance.
{\em Springer Berlin, Heidelberg}, 2009.

\bibitem{Oksendal}
B. \O ksendal.
Stochastic Differential Equations: An Introduction with Applications.
{\em Springer, Berlin}, 2003.

\bibitem {Pang2008}
J.S. Pang and D.E. Stewart.
Differential variational inequalities.
{\em Mathematical Programming}, {\bf113(2)}:345-424, 2008.

\bibitem{Pang2009}
J.S. Pang and D.E. Stewart.
Solution dependence on initial conditions in differential variational inequalities.
{\em Mathematical Programming}, {\bf116(1)}:429-460, 2009.

\bibitem {Pedjeu2012}
J.C. Pedjeu and G.S. Laddle.
Stochastic fractional differential equations: modeling, method and analysis.
{\em Chaos, Solitons $\&$ Fractals}, {\bf45(3)}:279-293, 2012.

\bibitem{Rockafellar2018}
R. Rockafellar.
Variational analysis of Nash equilibrium.
{\em Vietnam Journal of Mathematics}, {\bf46(1)}:73-85, 2018.

\bibitem{Shen2022}
G.J. Shen, J.L. Wu, R.D. Xiao and W.J. Zhan.
Stability of a non-Lipschitz stochastic Riemann-Liouville type fractional differential equation driven by L\'{e}vy noise.
{\em Acta Applicandae Mathematicae}, {\bf180(1)}:2, 2022

\bibitem {Sofonea2018}
M. Sofonea.
Convergence results and optimal control for a class of hemivariational inequalities.
{\em SIAM Journal on Mathematical Analysis}, {\bf50}:4066-4086, 2018.

\bibitem{Tang2020}
G.J. Tang, J.X. Cen, V.T. Nguyen and S.D. Zeng.
Differential variational hemivariational inequalities: existence, uniqueness, stability, and convergence.
{\em Journal of Fixed Point Theory and Applications}, {\bf22(4)}:83, 2020.

\bibitem {Wang2013}
X. Wang and N.J. Huang.
Differential vector variational inequalities in finite-dimensional spaces.
{\em Journal of Optimization Theory and Applications}, {\bf158(1)}:109-129, 2013.

\bibitem{Wang2014}
X. Wang, W. Li, X.S. Li and  N.J. Huang.
Stability for differential mixed variational inequalities.
{\em Optimization Letters}, {\bf8(6)}:1873-1887, 2014.

\bibitem{wang2011}
Z.M. Wang, Y.F. Su, D.X. Wang and  Y.C. Dong.
A modified Halpern-type iteration algorithm for a family of hemi-relatively nonexpansive mappings and systems of equilibrium problems in Banach spaces.
{\em Journal of Computational and Applied Mathematics}, {\bf235(8)}:2364-2371, 2011.

\bibitem{WBang2013}
B.C. Wang and J.F. Zhang.
Distributed output feedback control of Markov jump multi-agent systems.
{\em Automatica}, {\bf49(5)}:1397-1402, 2013.

\bibitem {Wu2024}
Z.B. Wu, W. Li, Q.G. Zhang and Y.B. Xiao.
New existence and stability results of mild solutions for fuzzy fractional differential variational inequalities.
{\em Journal of Computational and Applied Mathematics}, {\bf448}:115926, 2024.

\bibitem {Xiao2022}
Y.B. Xiao, M.T. Liu, T. Chen and N.J. Huang.
Stability analysis for evolutionary variational-hemivariational inequalities with constraint sets.
{\em Science China Mathematics}, {\bf65(7)}:1469-1484, 2022.


\bibitem{Xu2015}
Y. Xu, B. Pei and G.B. Guo.
Existence and stability of solutions to non-Lipschitz stochastic differential equations driven by L\'{e}vy noise.
{\em Applied Mathematics and Computation}, {\bf263}:398-409, 2015

\bibitem {Yin2011}
J. Yin, S. Khoo, Z. Man and X. Yu.
Finite-time stability and instability of stochastic nonlinear systems.
{\em Automatica}, {\bf47(12)}:2671-2677, 2011.


\bibitem{Yong1999}
J.M. Yong and X.Y. Zhou.
Stochastic Controls: Hamiltonian Systems and HJB Equations.
{\em Springer, New York}, 1999.


\bibitem {Zeng2018}
S.D. Zeng, Z.H. Liu and S. Mig\'{o}rski.
A class of fractional differential hemivariational inequalities with application to contact problem.
{\em Zeitschrift f\"{u}r Angewandte Mathematik und Physik}, {\bf69(2)}:36, 2018.

\bibitem{zeng2024}
Y. Zeng, Y.J. Zhang and  N.J. Huang.
A stochastic fractional differential variational inequality with L\'{e}vy jump and its application.
{\em Chaos, Solitons $\&$ Fractals}, {\bf178}:114372, 2024.




\bibitem{zhang2023s}
Y.J. Zhang, T. Chen, N.J. Huang, and X.S. Li. Euler scheme for solving a class of stochastic differential variational inequalities with some applications. {\em Communications in Nonlinear Science and Numerical Simulation}, {\bf127}:107577, 2023.


\bibitem {zhang2023r}
Y.J. Zhang, T. Chen, N.J. Huang and X.S. Li.
Penalty method for solving a class of stochastic differential variational inequalities with an application.
{\em Nonlinear Analysis: Real World Applications}, {\bf73}:103889, 2023.

\bibitem {zhang2023}
Y.J. Zhang, Z. Gou, N.J. Huang and X.S. Li.
A class of stochastic differential variational inequalities with some applications.
{\em Journal of Nonlinear and Convex Analysis}, {\bf24(1)}:75-100, 2023.


\end{thebibliography}
\end{document}